\def\z{{\mathcal z}}
\def\R{{\mathbb R}}
\def\x {{\boldsymbol x}}
\def\n {{\boldsymbol n}}
\def\u {{\boldsymbol u}}
\def\v {{\boldsymbol v}}
\def\w {{\boldsymbol w}}
\def\z {{\boldsymbol z}}
\def\g {{\boldsymbol g}}
\def\f {{\boldsymbol f}}
\def\F {{\boldsymbol F}}
\newtheorem{theorem}{Theorem}
\newtheorem{proposition}{Proposition}
\newtheorem{lemma}{Lemma}
\newdefinition{definition}{\em Definition}
\newtheorem{hypothesis}{\em Hypothesis}
\newtheorem{remark}{\em Remark}
\newproof{notation}{\em \textbf{Notation}}
\begin{document}

\begin{frontmatter}
	
	\title{  Analysis of backward Euler/Spectral discretization for an evolutionary   mass and heat transfer  in porous medium}
	
		\author{Sarra Maarouf\corref{cor1}}
		\author{Driss  Yakoubi\corref{cor2}}
		
			\cortext[cor1]{Laboratoire Jacques-Louis Lions,  Universit\'e Pierre et Marie Curie, 
				boite courier 187, 4 place Jussieu, 
				75252 Paris cedex 05, France\\ 
				E-mail. sarra.maarouf@gmail.com}
		 
		  \cortext[cor2]{GIREF,
			D{\'e}partement de math{\'e}matiques
			et de statistique,
			Pavillon Vachon,\\
			1045 Avenue de la m\'edecine,
			Universit{\'e} Laval,
			Qu{\'e}bec, Canada,
			G1V 0A6.\\
			E-mail.  yakoubi@giref.ulaval.ca
		}

	\begin{abstract}
This paper presents the unsteady  Darcy's equations  coupled with two nonlinear reaction-diffusion equations,
namely  this system describes  the mass concentration and heat  transfer in porous media.
The existence and uniqueness of the solution are established for both  the variational formulation problem  and for its discrete one obtained  using  spectral  discretization. Optimal a priori  estimates are given using the Brezzi-Rappaz-Raviart theorem. We conclude by some numerical tests which are in agreement with our theoretical results.
	\end{abstract}
\begin{keyword} 
	Darcy's equation, nonlinear reaction-diffusion equations, Spectral discretization
\end{keyword}
\end{frontmatter}

  \section{Introduction}\label{Sec-Intro}
  Mass transfer is used here in a specialized sense, namely the transport of a substance which is involved as a component (constituent, species) in a fluid mixture, an example is the transport of salt  in saline water. As we shall see below, convective mass transfer is analogous to convective  heat transfer.
\par The heat transfer coupling and ground by natural convection in a porous medium saturating fluid, has received much attention in recent years due to the importance of this process that occurs in many geophysical phenomena and engineering, storage of thermal energy and recoverable systems and oil reservoirs. Several studies on this subject are performed  by Nield et al  \cite{NieldBejan}, Ingham et al \cite{ingham, pop}, Vafai \cite{vafai}, V\'adasz \cite{vadasz} and Moorthy  et al \cite{moorthy}.
\par   In the most commonly occurring circumstances,  the transport of heat and mass are not directly coupled, 
  and both  mass concentration  and heat equations  hold without change. Instead, in double-diffusive convection the coupling takes place because the density of the fluid depends on both temperature $T$ and concentration $C$. 
  In this case  Darcy's law  is written under Boussinesq approximation in unsteady case as
 \begin{alignat}2\label{darcy-F(T-C)}
 \begin{array}{rcl}
\partial_t \u+\alpha \u+ \nabla p&=& \F(T,C),\\
\nabla \cdot \u &=& 0.
\end{array}
\end{alignat} 
Indeed, for sufficiently small isobaric changes in temperature and concentration, the mixture density  $\rho$ 
depends linearly on both $T$ and $C$, and we have approximately 
\begin{align}\label{F-bous}
\F(T,C) = \rho(T,C) \, \g \, =\, \rho_0\left( 1 - \beta (T-T_0) + \beta_C(C-C_0) \right)\, \g,
\end{align}
where the subscript zero refers to a reference state, $\beta$ is the volumetric thermal
expansion coefficient, $\beta_C$ is the volumetric concentration expansion coefficient,
 $\g$  is the gravitational acceleration and $\rho_0>0$ is the  initial fluid density.
\par    In some circumstances,  there is a direct coupling. For example,  when cross-diffusion (Soret and Dufour effects) is not negligible. 
Recall that the Soret effect refers to mass flux produced by a temperature gradient, and the Dufour effect refers to heat flux produced by a concentration gradient:
    \begin{alignat}2
\partial_t T+(\u\cdot\nabla) T-\nabla\cdot(\lambda_{11}\nabla T)-\nabla\cdot(\lambda_{12}\nabla C)=h_1,\label{chaleur_T}\\
\partial_t C+(\u\cdot \nabla)C-\nabla\cdot(\lambda_{22}\nabla C)-\nabla\cdot(\lambda_{21} \nabla T)=h_2, \label{concentration_C}
 \end{alignat}
 where $\lambda_{11}, \lambda_{22}$ are respectively, the thermal and mass diffusivity and $\lambda_{12}, \lambda_{21}$ are the Dufour and Soret coefficients of the porous medium (see for example \cite{ alam, anghel,  elarabawy, narayana,  postelnicu1}).
 The variation of density with temperature and concentration leads to a combined buoyancy force. 
 The fact that the coefficients of the equation \eqref{chaleur_T} are different to those of the equation \eqref{concentration_C} leads to interesting effects, such as the oscillating flow over time in the presence of conditions for stable limits.
   \par The paper is organized as follows :
 \begin{itemize}
 \item [$\bullet$] Section 2 presents the problem setting and its analysis.
 \item [$\bullet$]  Section 3 is devoted to the description of the discrete problem using a spectral method.
\item [$\bullet$] In  Section 4, we perform the  a priori analysis of the discretization and prove optimal error estimates.
\item [$\bullet$] Finally, we describe some numerical tests in Section 5. These preliminary tests were realized with FreeFEM3D and are in agreement with our theoretical results.
 \end{itemize}
 \section{Mathematical preliminaries}\label{Sec-MathPrel}
 \subsection{Notations and definitions}\label{notation}
 \par We will use Spectral methods to approximate  the coupled equations   \eqref{darcy-F(T-C)}-\eqref{chaleur_T}-\eqref{concentration_C}. Theses methods are based on the weak formulation of the partial differential equations. In this section,  we summarize the notations and definitions needed in our analysis. 
Let $\Omega$ be a bounded open set in $\mathbb{R}^d$, $d=2$ or $3$, with a Lipschitz-continuous boundary $\partial\Omega$ divided in two parts $ \Gamma_D$ and $\Gamma_N=\partial\Omega\backslash\overline\Gamma_D$ such that

\begin{hypothesis}\label{ass1}
\begin{enumerate}[(i)]
\item \,	the intersection $\overline\Gamma_D\cap\overline\Gamma_N$ is a Lipschitz-continuous 
	submanifold of $\partial \Omega$;  
\item \, $\Gamma_D$ has a positive $(d-1)$-measure in $\partial\Omega$.
\end{enumerate}
\end{hypothesis}

\noindent The inner product and  the norm   on $L^2(\Omega)$ or $L^2(\Omega)^d$  are denoted by 
$(\cdot,\cdot)$ and    $ \| \cdot\|$  respectively. As usual, $H^s(\Omega), s\in \mathbb{R}$, denotes the real  Sobolev space 
equipped with the norm $\|\cdot\|_{H^s(\Omega)}$ and semi-norm $|\cdot|_{H^s(\Omega)}$ 
(see  for instance \cite[Chap. III and VII]{Adams}). 
For a fixed positive real number $T_f$ (which is  the final time) and a  separable Banach space $E$ equipped with the norm $\|\cdot\|_{E}$, 
we denote by $\mathcal{C}^0(0,T_f;E)$ the space of continuous functions from $[0,T_f]$ with values in $E$. For a nonnegative integer $s$, 
we also introduce the space $H^s(0,T_f;E)$ in the following way: It is the space of measurable functions on $]0,T_f[$ with values in $E$ such that the mappings: $v\mapsto\|\partial^\ell_tv\|_E, 0\leq \ell\leq s$,
are square-integrable on $]0,T_f[$. We introduce the dual space $\left(H^{\frac{1}{2}}_{00}(\Gamma_N)\right)'$ of $H^{\frac{1}{2}}_{00}(\Gamma_N)$ $\big($see \cite[Chap.1, \textsection 11]{lions} for the definition of this space$\big)$,
and denote by   $ \langle \cdot,\cdot\rangle_{\Gamma_N} $
the duality pairing between them.
 Of course the coupled equations \eqref{darcy-F(T-C)}-\eqref{chaleur_T}-\eqref{concentration_C} 
 are to be complemented with boundary and initial conditions.
\subsection{Boundary and initial conditions}\label{B-I_C}
In this section we will describe the boundary and initial conditions  for 
the coupled equations \eqref{darcy-F(T-C)}-\eqref{chaleur_T}-\eqref{concentration_C} 
which we shall consider here. We impose the homogeneous sliding boundary conditions
on whole  boundary $\partial \Omega $ on the velocity $\u$, 
while  Dirichlet--Neumann (mixed boundary conditions) on both temperature and mass $T$, $C$  are prescribed as follows:
\begin{alignat}2
&\u\cdot \n=0\hspace{0.7cm}&{\rm{on}}& ~~\partial\Omega\times]0,T_f[,\label{cond_limite2}\\
&T=T_D\qquad\quad\;\;\;&{\rm{on}}&\;\;\Gamma_D\times]0,T_f[,\qquad
\partial_{\n} T 
={\vartheta}_\sharp\hspace{0.7cm} {\rm{on}}\;\; \Gamma_N\times ]0,T_f[\label{cond_T2}\\
&C=C_D\quad &{\rm on}& ~~\Gamma_D\times]0,T_f[,
\qquad \partial_{\n}C =\Psi_\sharp \hspace{0.7cm}{\rm on}~~\Gamma_N\times ]0,T_f[
\end{alignat}
and initial conditions
\begin{equation}\label{cond-initiale}
\u(\cdot,0)=\u_0,\quad T(\cdot,0)=T_0~~{\rm and}~~ C(\cdot,0)=C_0\qquad{\rm in}~~\Omega.
\end{equation}
Where $\n$ denotes the outer normal to $\partial \Omega$ 
and the data $T_D, \vartheta_\sharp, C_D, \Psi_\sharp, \u_0, T_0$ and $C_0$ 
will be specified later.
\par In view of   equations \eqref{darcy-F(T-C)}--\eqref{concentration_C}  we need to impose  additional assumptions on the Soret and Dufour coefficients $(\lambda_{ij})_{1\le i,j\le 2}$  and of the function $\F$ in order to avoid unnecessary technicalities: \, for $ 1 \le  i,j \le 2$
	\begin{enumerate}[(a)]
\item  \, The functions $ \lambda_{ij}: \, \Omega \longrightarrow \mathbb{R} $  are nonnegative, 
continuous on $\Omega$ and bounded
 from below away from 0 and above, \, i.e. $\exists$  two  constants  $\lambda_1,\lambda_2$ such that
\begin{equation}\label{lambda}
\forall \x\in\Omega,\qquad   0 <  \lambda_1 \leq \lambda_{ij}(\x)\leq\lambda_2 < +\infty.
\end{equation}
\item  \, The following coercivity property holds
\begin{equation}\label{coercivite}
\forall \z \in \mathbb{R}^2,\quad \z \Lambda\z ^t\geq \beta |\z|^2,
\end{equation}
\begin{eqnarray*}
\mbox{where}\; 
	\Lambda=
	\left(
	\begin{array}{rcl}
		\lambda_{11}\quad\lambda_{12}\\
		\lambda_{21}\quad\lambda_{22}
	\end{array}
	\right)
	\; 
\mbox{and}	\; |\cdot| \; \mbox{is the euclidean norm in}\; \R^2.
\end{eqnarray*}
\item \, 
Due to the Boussinesq assumptions, equation \eqref{F-bous}  is not essential to our analysis, in the sense that,  
we will  consider more general  function $\F$, but under the following hypotheses:\; 
 $\F$ is continuously differentiable on $\R^2$ with bounded derivatives and there exists a couple of real numbers $(T_b,C_b)$ where $\F$ vanishes and we set 
$$\gamma=\underset{x,y\in\mathbb{R}}{\sup }\;|\nabla\F(x,y)|.$$
\end{enumerate}
So, we proceed  to the change of variable by setting ${\vartheta}=T-T_b$, $\Psi=C-C_b$ and we introduce $\f({\vartheta},\Psi)= \displaystyle \frac{1}{\gamma}\F(T,C)$. 
 This function vanishes at $(0,0)$,  is continuously differentiable on $\mathbb R^2$ 
 and the norm of its gradient is less than 1. Using the mean value Theorem, we obtain 
 \begin{equation}\label{ff}
| \f(x,y)  | \leq \left( x^2  + y^2 \right)^{\frac{1}{2}}, \qquad \forall \left(x,y \right) \in \R^2.
 \end{equation} 
Thanks to  this change, the coupled problem equations
 \eqref{darcy-F(T-C)}-\eqref{chaleur_T}-\eqref{concentration_C}  complemented  with its boundary and initial conditions can be rewritten as:

\begin{equation}\label{pblm-couple}
\left\{
\begin{array}{rlc}
 \partial_t  \u+\alpha \u + \nabla p &=\,  \gamma\f({\vartheta},\Psi)  
  \vspace{+.2cm}
\\ \vspace{+.2cm} 
 \nabla\cdot \u &= \,0\;\;
\\ \vspace{+.2cm}
  \partial_t{\vartheta}+(\u\cdot\nabla){\vartheta}-\nabla\cdot(\lambda_{11}\nabla {\vartheta})-\nabla\cdot(\lambda_{12}\nabla\Psi)
&=\,h_1 
\\ \vspace{+.2cm}
 \partial_t\Psi+(\u\cdot\nabla)\Psi-\nabla\cdot(\lambda_{21}\nabla \Psi)-\nabla\cdot(\lambda_{22}\nabla{\vartheta})
&=\,h_2
\end{array}
  \text{ in } \Omega\times ]0,T_f[
\right.
\end{equation}
\begin{equation}\label{pblm-couple-CB}
\left\{
\begin{array}{rcl}
  \u \cdot \n =0\; &\rm{on}&  \partial\Omega\times ]0,T_f[  \vspace{+.2cm}
  \\
  \vspace{+.2cm}
  {\vartheta}={\vartheta}_D= T_D-T_b \quad{\rm and}\quad \Psi=\Psi_D=C_D-C_b &{\rm on}& \Gamma_D\times]0,T_f[
  \\
  \vspace{+.2cm}
 \partial_{\n} {\vartheta} ={\vartheta}_\sharp\quad{\rm and}
 \quad
 \partial_{\n}\Psi =  \Psi_\sharp&{\rm on}&  \Gamma_N\times ]0,T_f[ 
\end{array}
\right.
\end{equation}

\begin{equation}\label{pblm-couple-CI}
\u(\cdot,0)= \u_0,\,{\vartheta}(\cdot,0)={\vartheta}_0=T_0-T_b \,{\rm{and}} \,\Psi(\cdot,0)=\Psi_0=C_0-C_b \qquad{\rm{in}}\;  \Omega.
\end{equation}

From now on, we prefer to deal with this system.
\subsection{Weak formulation}\label{Sec-VarFo}
 \par  In this section, we discuss the weak formulation of the coupled equations  \eqref{pblm-couple}. To do so,
all source functions, initial and boundary data   are assumed satisfying the following regularity assumptions  
\begin{eqnarray}\label{data3}
\begin{aligned}
&h_1,\, h_2\in L^2(0,T_f;L^2(\Omega)),   \quad {\vartheta}_\sharp, \Psi_\sharp\in L^2(0,T_f;H_{00}^{\frac{1}{2}}(\Gamma_N)'), \\
&{\vartheta}_D,  \Psi_D\in L^2(0,T_f;H^\frac{1}{2}(\Gamma_D)),\quad  \u_0\in L^2(\Omega)^d,\quad {\vartheta}_0,\Psi_0\in L^2(\Omega).
\end{aligned}
\end{eqnarray}
Since ${\vartheta}_D$ and  $ \Psi_D$ belong to $ L^2(0,T_f;H^\frac{1}{2}(\Gamma_D))$, there exist liftings  denoted by 
$\mathcal{R}({\vartheta}_D)$ and $\mathcal{R}({\Psi}_D)$ respectively, which  belong to $L^2(0,T_f;H^1(\Omega))$ such that 
\begin{equation}\label{lift-cont}
\begin{array}{rcl}
 \|\mathcal{R}({\vartheta}_D)\|_{L^2(0,T_f;H^1(\Omega))} &\leq& c_0\|{\vartheta}_D\|_{L^2(0,T_f;H^\frac{1}{2}(\Gamma_D))} 
 \\ 
\|\mathcal{R}({\Psi}_D)\|_{L^2(0,T_f;H^1(\Omega))} &\leq& c_0\|\Psi_D\|_{L^2(0,T_f;H^\frac{1}{2}(\Gamma_D))}
\end{array}
\end{equation}
where the constant $c_0>0$ depends only on $\Omega$. Also, we refer to Hopf lemma (see \cite[Chap. IV, lem. 2.3]{girault}) for the following result: for any $\varepsilon>0$ and  for all  $t\in[0,T_f]$,
\begin{equation}\label{lift-epsilon}
\begin{array}{rcl}
 \|\mathcal{R}({\vartheta}_D)(\cdot,t)\|_{H^1(\Omega)} &\leq& \varepsilon\|{\vartheta}_D(\cdot,t)\|_{H^\frac{1}{2}(\Gamma_D)} 
 \\ 
\|\mathcal{R}({\Psi}_D)(\cdot,t)\|_{H^1(\Omega)} &\leq& \varepsilon\|\Psi_D(\cdot,t)\|_{H^\frac{1}{2}(\Gamma_D)}.
\end{array}
\end{equation}
 We next consider the following space
\begin{align}
H^1_D(\Omega) \, =\, \left \{  \eta  \in H^1(\Omega),\; \eta  \,=\,0 \quad \mbox{on} \quad \Gamma_D   \right\} \label{H1*}
\end{align}
and we introduce the pressure space
$$
H^1_\diamond(\Omega)\,= \, H^1(\Omega ) \cap L^2_0(\Omega ) \,=\, \{q\in H^1(\Omega),\;\int_\Omega q\;d\x=0\}
$$
where $L^2_0(\Omega) = L^2(\Omega)/ \mathbb{R} $ the space of $L^2$ functions with vanishing mean.

\par Based on all that we have introduced above,  the  time dependent variational formulation  of  \eqref{pblm-couple} 
complemented  with its boundary and initial conditions is given as follow : \; 
Find $\left(\u,p ,\vartheta, \Psi \right)$ such that
\begin{alignat*}2
 \u  & \in H^1\left(0,T_f;L^2(\Omega)^d\right)\cap L^2\left(0,T_f;L^3(\Omega)^d\right)
\vspace{+.2cm}
 \\ \vspace{+.2cm}
  p &\in L^2\left(0,T_f;H_\diamond^1(\Omega)\right) 
  \\ \vspace{+.2cm}
 {\vartheta}, \Psi  & \in H^1\left(0,T_f;L^2(\Omega)\right)\cap L^2\left(0,T_f;H^1(\Omega)\right), 
 \end{alignat*}
 $$
 {\vartheta}={\vartheta}_D\quad{\rm and}\quad \Psi=\Psi_D\quad{\rm on}\quad\Gamma_D\times]0,T_f[
 $$
and for   {\it{a.e} } \; $ 0\leq t\leq T_f$ and for all $ \left( \v,q\right)\, \in \,  L^2(\Omega)^d \times  H_\diamond^1(\Omega),$ 
\begin{alignat}2
  \int_\Omega\partial_t\u\cdot\v\;d\x+\alpha\int_{\Omega}\u\cdot\v\;d\x+\int_{\Omega}\v\cdot\nabla p\;d\x &= \gamma\int_{\Omega}\f({\vartheta},\Psi)\cdot\v\;d\x,\label{var_darcy}
  \vspace{+.2cm}
  \\ \vspace{+.2cm}
\int_{\Omega}\v\cdot\nabla q \;d\x &=0.\label{var_div}
\end{alignat}
For all  $ \left( \eta  \,  , \Phi \right)  \, \in\,  \left(H^1_D(\Omega)\right)^2$,
\begin{alignat}2
&
 \int_\Omega \partial_t {\vartheta}\;\eta \;d\x+\int_\Omega(\u\cdot\nabla){\vartheta}\;\eta \;d\x+\int_\Omega\lambda_{11}\nabla{\vartheta}\cdot \nabla\eta \;d\x\notag
 \vspace{+.2cm}
 \\ \vspace{+.2cm}
&\hspace{1cm}+\int_\Omega\lambda_{12}\nabla\Psi\cdot \nabla\eta \;d\x=\int_\Omega h_1\;\eta \;d\x+
\langle(\lambda_{11}{\vartheta}_\sharp+\lambda_{12}\Psi_{\sharp}),\eta   \rangle_{\Gamma_N},\label{var_chaleur}
\\ \vspace{+.3cm}
&
\int_\Omega \partial_t \Psi\;\Phi\;d\x+\int_\Omega(\u\cdot\nabla)\Psi\;\Phi\;d\x+\int_\Omega\lambda_{21}\nabla{\vartheta}\cdot \nabla\Phi\;d\x\notag\\
& \hspace{1cm}
+\int_\Omega\lambda_{22}\nabla\Psi\cdot \nabla\Phi\;d\x=\int_\Omega h_2\;\Phi\;d\x+ 
\langle \left(\lambda_{22}\Psi_\sharp+\lambda_{21}{\vartheta}_\sharp\right),\Phi \rangle_{\Gamma_N}.\label{var_concentration}
\end{alignat}
\par We summarize the essential ingredients for this time dependent weak formulation to be well-posed. We first start by 
	the  inf-sup condition which is obviously to check:
\begin{alignat}2
& \forall q\in H^1(\Omega),\quad \underset{\v\in L^2(\Omega)^d}{\sup}{\frac{\displaystyle\int_{\Omega}\v\cdot\nabla q\;d\x}{\|\v\|}}=\|\nabla q\|. \label{inf-sup}
\end{alignat}
Next, we  prove  the following stability result. 
\begin{proposition}\label{Proposition1}
For all  data such that \eqref{data3} holds, 
any weak solution $(\u,{\vartheta},\Psi)$ of problem \eqref{var_darcy}--\eqref{var_concentration} satisfies the following stability estimates for all $t \in [0,T_f]$
\begin{equation}
\begin{aligned}
& \displaystyle  \|\u(\cdot,t)\|^2+\alpha  \, \|\u\|^2_{L^2(0,t;L^2(\Omega)^d)}   
 \quad \leq \,
 \|\u_0\|^2  \label{estimation-u} \\
 & \hspace{+3.5cm}+\,
\displaystyle \frac{2\gamma^2}{\alpha} \, \left (\|{\vartheta}\|^2_{L^2(0,t;L^ 2(\Omega))}
 \,+\, \|\Psi\|^2_{L^2(0,t;L^2(\Omega))}\right). \\
 \end{aligned}
 \end{equation}
 \begin{equation}\label{estimation-theta-xi}
  \begin{aligned}
 & \displaystyle \|{\vartheta}(\cdot,t)\|^2+\|\Psi(\cdot,t)\|^2+\beta \, \left( 
 \|{\vartheta}\|^2_{L^2(0,t;H^1(\Omega))}\, +\,  \|\Psi\|^2_{L^2(0,t;H^1(\Omega))} \right)  
 \\  &  \hspace{+1 cm}
 \le \, \displaystyle  \, \left(  \|{\vartheta}_0\|^2+\|\Psi_0\|^2 \right) 
 +\, \|h_1\|^2_{L^2(0,t;L^2(\Omega))}+\|h_2\|^2_{L^2(0,t;L^2(\Omega))}  \\ 
 & \hspace{+1.8cm}
 + \, 2\lambda_2\left(\|{\vartheta}_\sharp\|^2_{L^2(0,t;H^\frac{1}{2}_{00}(\Gamma_N)^{'})}+\|\Psi_\sharp\|^2_{L^2(0,t;H_{00}^\frac{1}{2}(\Gamma_N)^{'})}\right) 
 \vspace{+.2cm}
 \\ \vspace{+.2cm}
 & \hspace{+1.8cm}
 +\, \left(\lambda_2+c_0\right) \left(\|{\vartheta}_D\|^2_{H^1(0,t;H^\frac{1}{2}(\Gamma_D))}+\|\Psi_D\|^2_{H^1(0,t;H^\frac{1}{2}(\Gamma_D))}\right).
 \end{aligned}
\end{equation}
\end{proposition}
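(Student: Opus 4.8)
The plan is to establish the two estimates by separate energy arguments, each built on a judicious choice of test function followed by Young's inequality and integration in time. For \eqref{estimation-u} I would test the momentum balance \eqref{var_darcy} with $\v=\u(\cdot,t)$, which is admissible since $\u(\cdot,t)\in L^2(\Omega)^d$. The pressure term $\int_\Omega\u\cdot\nabla p\,d\x$ vanishes: because $p(\cdot,t)\in H^1_\diamond(\Omega)$, it is enough to take $q=p$ in the incompressibility constraint \eqref{var_div} (equivalently, integrate by parts and use $\nabla\cdot\u=0$ with $\u\cdot\n=0$). The left-hand side then reduces to $\frac12\frac{d}{dt}\|\u\|^2+\alpha\|\u\|^2$. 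For the right-hand side I would invoke the growth bound \eqref{ff}, which gives $\|\f(\vartheta,\Psi)\|^2\le\|\vartheta\|^2+\|\Psi\|^2$, and then Cauchy--Schwarz together with Young's inequality $\gamma\|\f\|\,\|\u\|\le\frac{\alpha}{2}\|\u\|^2+\frac{\gamma^2}{2\alpha}\|\f\|^2$ to absorb part of the dissipation; integrating the resulting differential inequality on $[0,t]$ yields \eqref{estimation-u}.

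The estimate \eqref{estimation-theta-xi} is the substantive one. First I would homogenise the Dirichlet data by setting $\vartheta^0=\vartheta-\mathcal{R}(\vartheta_D)$ and $\Psi^0=\Psi-\mathcal{R}(\Psi_D)$, both of which lie in $H^1_D(\Omega)$ and are therefore legitimate test functions. Choosing $\eta=\vartheta^0$ in \eqref{var_chaleur}, $\Phi=\Psi^0$ in \eqref{var_concentration}, and adding, I would treat the four groups of terms as follows. For the convection, writing $\vartheta=\vartheta^0+\mathcal{R}(\vartheta_D)$, the principal part $\int_\Omega(\u\cdot\nabla)\vartheta^0\,\vartheta^0\,d\x$ (and its $\Psi$-analogue) vanishes, since $\int_\Omega(\u\cdot\nabla)w\,w\,d\x=\frac12\int_{\partial\Omega}(\u\cdot\n)w^2-\frac12\int_\Omega(\nabla\cdot\u)w^2=0$ by $\nabla\cdot\u=0$ and $\u\cdot\n=0$. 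For the diffusion, the four coefficients assemble pointwise into $\sum_{k=1}^d z_k\Lambda z_k^t$ with $z_k=(\partial_k\vartheta,\partial_k\Psi)$, so the coercivity \eqref{coercivite} bounds them below by $\beta(\|\nabla\vartheta\|^2+\|\nabla\Psi\|^2)$, up to lifting corrections $\int_\Omega\lambda_{ij}\nabla(\cdot)\cdot\nabla\mathcal{R}(\cdot)$. The Neumann terms are controlled by the duality pairing, using $\lambda_{ij}\le\lambda_2$ from \eqref{lambda} and the trace embedding $H^1_D(\Omega)\hookrightarrow H^{1/2}_{00}(\Gamma_N)$, which produces the factor $2\lambda_2$; the source terms $\int_\Omega h_i(\cdot)$ give the coefficient-one contributions after Young.

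The time-derivative terms yield $\frac12\frac{d}{dt}(\|\vartheta^0\|^2+\|\Psi^0\|^2)$ plus cross terms $\int_\Omega\partial_t\mathcal{R}(\vartheta_D)\,\vartheta^0\,d\x$, which, by linearity of the lifting and the continuity bound \eqref{lift-cont} applied to $\partial_t\vartheta_D$, are dominated by $\|\partial_t\vartheta_D\|_{H^{1/2}(\Gamma_D)}$; this is precisely the origin of the $H^1$-in-time norms of the Dirichlet data. Collecting everything, applying Young's inequality throughout, and crucially invoking the $\varepsilon$-lifting \eqref{lift-epsilon} to render the lifting-gradient contributions small enough to be absorbed into the coercive term $\beta(\|\nabla\vartheta\|^2+\|\nabla\Psi\|^2)$, I would obtain a differential inequality whose integration on $[0,t]$ gives \eqref{estimation-theta-xi}. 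Converting $\|\vartheta^0(t)\|^2$ back to $\|\vartheta(t)\|^2$ via $\|\vartheta\|^2\le 2\|\vartheta^0\|^2+2\|\mathcal{R}(\vartheta_D)\|^2$ and \eqref{lift-cont} supplies the remaining $(\lambda_2+c_0)$ Dirichlet contribution.

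The step I expect to be the main obstacle is the convection--lifting cross term $\int_\Omega(\u\cdot\nabla)\mathcal{R}(\vartheta_D)\,\vartheta^0\,d\x$ that survives the skew-symmetric cancellation. It is genuinely trilinear, and it is here that the solution regularity $\u\in L^2(0,T_f;L^3(\Omega)^d)$ is indispensable: I would bound it by $\|\u\|_{L^3}\,\|\nabla\mathcal{R}(\vartheta_D)\|\,\|\vartheta^0\|_{L^6}$ using the embedding $H^1(\Omega)\hookrightarrow L^6(\Omega)$, then use the $\varepsilon$-lifting to make $\|\nabla\mathcal{R}(\vartheta_D)\|$ small and absorb the $\|\nabla\vartheta^0\|$ factor into the diffusion. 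Arranging the signs and the Young constants so that all dissipative terms persist with exactly the stated coefficients is the delicate bookkeeping that ties the argument together.
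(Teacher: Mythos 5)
Your proposal is correct and follows essentially the same route as the paper's proof: test \eqref{var_darcy} with $\u$ (pressure killed by \eqref{var_div}), then test \eqref{var_chaleur}--\eqref{var_concentration} with the lifted unknowns ${\vartheta}-\mathcal{R}({\vartheta}_D)$ and $\Psi-\mathcal{R}(\Psi_D)$, use the coercivity \eqref{coercivite}, and control the surviving convection--lifting trilinear term through the $L^3$ regularity of $\u$ combined with the Hopf lifting \eqref{lift-epsilon}. The only detail the paper makes explicit that you leave qualitative is the precise choice $\varepsilon=1/\|\u\|_{L^3(\Omega)^d}$ in \eqref{lift-epsilon}, which is exactly what cancels the velocity factor and keeps \eqref{estimation-theta-xi} independent of $\u$.
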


\begin{proof}
To obtain the first estimate \eqref{estimation-u}, we  take $\v =\u$ in  \eqref{var_darcy}, 
 we use successively  \eqref{var_div}, \eqref{ff} and Cauchy-Schwarz inequality, we find for all $t \in [0,T_f]$
\begin{alignat*}2
\frac{1}{2} \frac{d}{dt}  \| \u\|^2 \, +\, \alpha \| \u \|^2 \, 
 &\le  \gamma \left(\| {\vartheta}\|^2 + \| \Psi\|^2 \right)^{\frac{1}{2}}  \, \| \u\|.
\end{alignat*}
Integrating on time and using Young's inequality yield   \eqref{estimation-u}.
Next, taking test functions $\eta =\tilde {\vartheta}={\vartheta}-\mathcal R({\vartheta}_D)$  and $\Phi=\tilde \Psi = \Psi-\mathcal{R}({\Psi}_D)$ 
in \eqref{var_chaleur}-\eqref{var_concentration}, adding up the two obtained equations, we have
\begin{alignat*}2
\frac{1}{2} \frac{d}{dt} \left( \int_\Omega( |\tilde{\vartheta}|^2  
 +\, 
| \tilde\Psi|^2)\;d\x \right) 
& +\,\int_\Omega\lambda_{11} | \nabla\tilde{\vartheta}|^2\;d\x+\int_\Omega\lambda_{22} |\nabla\tilde\Psi|^2\;d\x
\\ 
&  \hspace{.5cm}
+ \int_\Omega\lambda_{12}\nabla\tilde\Psi\cdot \nabla\tilde{\vartheta}\;d\x+\int_\Omega\lambda_{21}\nabla\tilde{\vartheta}\cdot \nabla\tilde\Psi\;d\x  
\\
   & \hspace{+1cm} = \langle H_1,\tilde{\vartheta}\rangle+ \langle H_2,\tilde\Psi \rangle
\end{alignat*}
where $H_1$ and $H_2$ are defined as follow
\begin{alignat*}2
 \langle H_1, \eta  \rangle    &= \int_\Omega h_1\, \eta \, d\x 
\,+\, \langle \lambda_{11}{\vartheta}_\sharp+\lambda_{12}\Psi_{\sharp}\, ,\eta  \rangle_{\Gamma_N} \\
& \hspace{+.5cm}- \int_\Omega \partial_t \mathcal{R}({\vartheta}_D)\,\eta \,d\x
 -\, \int_\Omega(\u\cdot\nabla)\mathcal{R}({\vartheta}_D)\, \eta \,d\x \\ 
 &\hspace{+1cm} -\int_\Omega\lambda_{11}\nabla\mathcal{R}({\vartheta}_D)\cdot \nabla\eta \,d\x
\,-\, \int_\Omega\lambda_{12}\nabla\mathcal{R}({\Psi}_D)\cdot \nabla\eta \,d\x, \\  \vspace{+.2cm}
 \langle H_2,\Phi \rangle &=\int_\Omega h_2\;\Phi\;d\x 
 + \langle \lambda_{22}\Psi_\sharp+\lambda_{21}{\vartheta}_\sharp\,,\Phi \rangle_{\Gamma_N} \\
& \hspace{+.5cm}-\int_\Omega \partial_t \mathcal{R}({\Psi}_D)\,\Phi\,d\x 
- \int_\Omega(\u\cdot\nabla)\mathcal{R}({\Psi}_D) \Phi d\x
\\ 
&\hspace{+1cm}- \int_\Omega\lambda_{22}\nabla\mathcal{R}({\Psi}_D)\cdot \nabla\Phi d\x - 
\int_\Omega\lambda_{12}\nabla\mathcal{R}({\vartheta}_D)\cdot \nabla\Phi d\x.
\end{alignat*}
Hence, integrating on time $t$, using properties \eqref{lambda}-\eqref{coercivite}, 
the Cauchy-Schwarz inequality repeatedly and applying Young's inequality  combining with lifting estimates \eqref{lift-cont} and \eqref{lift-epsilon} with $\varepsilon =\frac{1}{\|\u\|_{L ^3(\Omega)^d}}$, we deduce the desired estimate.
\end{proof}
 Let us point out some remarks concerning Proposition \ref{Proposition1} and its proof.
\begin{remark} \label{remark}	
	\begin{enumerate}[(1)]
		\item 	\, In the velocity stability \eqref{estimation-u}, the dependence of unknowns on  ${\vartheta}$ and $\Psi$
	can be expressed by combining both estimates   \eqref{estimation-u} and  \eqref{estimation-theta-xi}.
	\vspace{+.2cm} 
	\item	  The dependence of the velocity in  the heat and mass stability estimate \eqref{estimation-theta-xi} does not appear explicitly. This  comes from the choice of $\varepsilon $  equal to $ \frac{1}{\|\u\|_{L ^3(\Omega)^d}}$ to bound the lifting terms. 
	\vspace{+.2cm} 
	\item \,
	 in  \cite{bergirraj}  Bernardi et al.  analyzed   problem \eqref{var_darcy}-\eqref{var_div}  provided by Dirichlet condition on the pressure. 
		They prove  the existence and uniqueness of its solution $(\u,p)$ in 
		$ H^1(0,T_f,L^2(\Omega)^d)\times L^2(0,T_f;H^1_\diamond(\Omega))$, for all data  
		${\vartheta}$ and $\Psi$ in $ L^2(0,T_f;L^2(\Omega))$  and $\u_0$ in $L^2(\Omega)^d$.
		Moreover,  this solution satisfies the following stability estimate
		\begin{alignat*}{2}
		& \|\u\|_{H^1(0,T_f; L^2(\Omega)^d)}+\|p\|_{L^2(0,T_f;H^1(\Omega))} \leq  \notag \\ 
		& \hspace{+4cm}
		c\big(\|\u_0\|+\|{\vartheta}\|_{L^2(0,T_f;L^2(\Omega))}+\|\Psi\|_{L^2(0,T_f;L^2(\Omega))}\big).
		\end{alignat*}
		\end{enumerate}
\end{remark}

Furthermore, the authors prove the following regularity result, see  \cite[Prop. 2.1]{bergirraj}
\begin{proposition}\label{prop_reg}
Let $s$  be a real number equal to $1$ if $\Omega$ is convex, to $\frac{1}{2}$ otherwise. We assume that
\begin{enumerate}[(i)]
\item ${\vartheta}$ and $\Psi$ belong to $L^2(0,T_f;H^s(\Omega)) $ and such that 
$\nabla\cdot \f({\vartheta},\Psi)$ belongs to $L^2(]0,T_f[ \times \Omega)$,
\item  the initial velocity $\u_0$ belongs to $H^s(\Omega)^d$.
\end{enumerate}
Then, the solution $(\u,p)$ of problem \eqref{var_darcy}-\eqref{var_div} belongs to $H^1(0,T_f; H^s(\Omega)^d)\times L^2(0,T_f; H^{s+1}(\Omega))$.
\end{proposition}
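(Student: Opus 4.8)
The plan is to decouple the system by first extracting the pressure through a purely elliptic problem and then recovering the velocity by integrating the momentum equation in time. Since \eqref{var_darcy} holds for every $\v\in L^2(\Omega)^d$, I would take $\v=\nabla q$ with $q\in H^1(\Omega)$ arbitrary (the constants playing no role). The incompressibility of $\u$ encoded in \eqref{var_div} gives $\int_\Omega\u\cdot\nabla q\,d\x=0$, and differentiating this identity in time (licit since $\u\in H^1(0,T_f;L^2(\Omega)^d)$) kills the $\partial_t\u$ contribution as well; hence the $\partial_t\u$ and $\alpha\u$ terms drop and $p(\cdot,t)\in H^1_\diamond(\Omega)$ is characterized as the solution of the weak Neumann problem $\int_\Omega\nabla p\cdot\nabla q\,d\x=\gamma\int_\Omega\f(\vartheta,\Psi)\cdot\nabla q\,d\x$ for all $q\in H^1(\Omega)$. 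Formally this is $\Delta p=\gamma\,\nabla\cdot\f(\vartheta,\Psi)$ in $\Omega$ with $\partial_{\n}p=\gamma\,\f(\vartheta,\Psi)\cdot\n$ on $\partial\Omega$, whose solvability condition is exactly the divergence theorem applied to $\f(\vartheta,\Psi)$ and is automatic in the weak form.

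Next I would show $\f(\vartheta,\Psi)\in L^2(0,T_f;H^s(\Omega)^d)$. The $L^2$ part is controlled by the pointwise bound \eqref{ff}, while the higher-order part uses that $\nabla\f$ is bounded: for $s=1$ the chain rule $\nabla\big(\f(\vartheta,\Psi)\big)=\partial_1\f\,\nabla\vartheta+\partial_2\f\,\nabla\Psi$ is bounded in $L^2$ by $\|\nabla\vartheta\|+\|\nabla\Psi\|$, and for $s=\tfrac12$ the Lipschitz inequality $|\f(a)-\f(b)|\le|a-b|$ inserted into the Gagliardo double integral yields $[\f(\vartheta,\Psi)]_{H^{1/2}}\lesssim[\vartheta]_{H^{1/2}}+[\Psi]_{H^{1/2}}$; in both cases $\|\f(\vartheta,\Psi)\|_{H^s(\Omega)}\lesssim\|\vartheta\|_{H^s(\Omega)}+\|\Psi\|_{H^s(\Omega)}$. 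Together with the hypothesis $\nabla\cdot\f(\vartheta,\Psi)\in L^2(]0,T_f[\times\Omega)$ (the interior source $\Delta p$) and this $H^s$ control of the flux datum, the sharp elliptic regularity of the Laplacian—$H^2$ on a convex domain, $H^{3/2}$ on a general Lipschitz domain—gives $p(\cdot,t)\in H^{s+1}(\Omega)$ with $\|p(\cdot,t)\|_{H^{s+1}(\Omega)}\lesssim\|\nabla\cdot\f(\vartheta,\Psi)(\cdot,t)\|+\|\f(\vartheta,\Psi)(\cdot,t)\|_{H^s(\Omega)^d}$. Squaring and integrating over $[0,T_f]$ yields $p\in L^2(0,T_f;H^{s+1}(\Omega))$, and in particular $\nabla p\in L^2(0,T_f;H^s(\Omega)^d)$.

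Finally I would read the momentum equation as the linear evolution $\partial_t\u+\alpha\u=\gamma\f(\vartheta,\Psi)-\nabla p$ in the Banach space $H^s(\Omega)^d$. Its right-hand side now lies in $L^2(0,T_f;H^s(\Omega)^d)$ by the previous step, and $\u_0\in H^s(\Omega)^d$ by assumption (ii), so the variation-of-constants formula $\u(t)=e^{-\alpha t}\u_0+\int_0^t e^{-\alpha(t-\tau)}\big(\gamma\f(\vartheta,\Psi)-\nabla p\big)(\tau)\,d\tau$ gives $\u\in\mathcal{C}^0(0,T_f;H^s(\Omega)^d)$; reinserting this into the equation shows $\partial_t\u\in L^2(0,T_f;H^s(\Omega)^d)$, i.e.\ $\u\in H^1(0,T_f;H^s(\Omega)^d)$, which with the pressure bound is the assertion.

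I expect the genuine difficulty to be concentrated in the elliptic regularity step: producing \emph{exactly} the exponent $s+1$ on a merely convex or Lipschitz domain, where full $H^2$ regularity is lost in the non-convex case, and giving—especially in the borderline case $s=\tfrac12$—a rigorous meaning to the Neumann datum $\f(\vartheta,\Psi)\cdot\n$, whose trace need not exist for $H^{1/2}(\Omega)$ fields. This is most safely handled by staying at the level of the weak formulation $\int_\Omega\nabla p\cdot\nabla q\,d\x=\gamma\int_\Omega\f(\vartheta,\Psi)\cdot\nabla q\,d\x$ and invoking the regularity of the associated solution operator $\g\mapsto p$ from $H^s(\Omega)^d$ to $H^{s+1}(\Omega)$, rather than manipulating the boundary trace directly. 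By contrast, the composition estimate for $\f(\vartheta,\Psi)$ and the time-integration recovering $\u$ are comparatively routine.
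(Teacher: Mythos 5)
Your strategy --- characterize $\nabla p$ through the scalar elliptic problem obtained by testing \eqref{var_darcy} with gradients, establish $\f({\vartheta},\Psi)\in L^2(0,T_f;H^s(\Omega)^d)$ by Lipschitz/chain-rule estimates, then integrate the momentum equation as an ODE in $H^s(\Omega)^d$ --- is the natural one, and it is in fact the strategy behind the result the paper relies on: the paper itself gives no proof of this proposition but quotes it from \cite[Prop. 2.1]{bergirraj}. Note, however, that (as Remark \ref{remark} records) that reference treats the Darcy system with a \emph{Dirichlet condition on the pressure}; there one tests with gradients of $H^1_0(\Omega)$ functions, $p$ solves the Dirichlet problem $\Delta p=\gamma\nabla\cdot\f({\vartheta},\Psi)\in L^2(\Omega)$, $p=p_b$ on $\partial\Omega$, and the exponents $s+1=2$ (convex) and $s+1=\frac32$ (Lipschitz) come from classical Dirichlet regularity (Grisvard, Jerison--Kenig), into which the hypothesis $\nabla\cdot\f({\vartheta},\Psi)\in L^2$ plugs directly. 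Your composition estimates and your final variation-of-constants step are correct.

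The gap is in your elliptic step, and it cannot be closed by ``invoking the regularity of the associated solution operator $\g\mapsto p$ from $H^s(\Omega)^d$ to $H^{s+1}(\Omega)$'': for the boundary condition of \emph{this} paper ($\u\cdot\n=0$), $p$ solves a Neumann problem whose flux datum $\gamma\f({\vartheta},\Psi)\cdot\n$ is only in $H^{-1/2}(\partial\Omega)$ (the normal trace of an $H({\rm div},\Omega)$ field), whereas the sharp results you appeal to ($H^2$ on convex, $H^{3/2}$ on Lipschitz domains) require $L^2$ interior data together with flux data in $H^{1/2}(\partial\Omega)$, resp.\ $L^2(\partial\Omega)$; splitting the problem into (interior datum, zero flux) plus (harmonic, flux datum) therefore returns only $H^1(\Omega)$ from the second piece, and no off-the-shelf theorem covers the pair $\left(\nabla\cdot\g,\,\g\cdot\n\right)$ you actually have. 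A correct proof must exploit that both data come from the same field. For $s=1$ this works: setting $\w=\gamma\f({\vartheta},\Psi)-\nabla p$, one has $\nabla\cdot\w=0$, $\w\cdot\n=0$ and $\cu\w=\gamma\,\cu\f({\vartheta},\Psi)\in L^2$ because $\f({\vartheta},\Psi)\in H^1(\Omega)^d$, and the embedding of $\left\{\v\in L^2(\Omega)^d:\ \nabla\cdot\v\in L^2(\Omega),\ \cu\v\in L^2,\ \v\cdot\n=0\ \hbox{on }\partial\Omega\right\}$ into $H^1(\Omega)^d$, valid on convex domains \cite{ABDG}, yields $\nabla p\in H^1(\Omega)^d$, hence $p\in H^2(\Omega)$. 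For $s=\frac12$ this repair is unavailable, since $\cu\f({\vartheta},\Psi)$ is then only in $H^{-1/2}$; worse, by the Dahlberg/Jerison--Kenig square-function estimates, a harmonic function in $H^{3/2}(\Omega)$ on a Lipschitz domain has normal derivative in $L^2(\partial\Omega)$, so the conclusion $p\in H^{3/2}(\Omega)$ would force $\f({\vartheta},\Psi)\cdot\n\in L^2(\partial\Omega)$ --- a property that hypotheses (i)--(ii) do not guarantee for a field that is merely $H^{1/2}$ with divergence in $L^2$. So either one works in the pressure-Dirichlet setting of \cite{bergirraj}, where your outline becomes rigorous, or the case $s=\frac12$ requires an additional hypothesis or a genuinely finer argument that your proposal does not supply.
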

\noindent The following theorem give us  the  existence result.
\begin{theorem}
Assume that the domain $\Omega$ is a bounded open set of $\mathbb R ^2$, or a convex or a polyhedron of $\mathbb R^3$. 
 For all data $h_1,h_2,{\vartheta}_D,\Psi_D,{\vartheta}_\sharp,\Psi_\sharp$ and ${\vartheta}_0,\Psi_0$ satisfy \eqref{data3} 
 problems \eqref{var_darcy} to \eqref{var_concentration} has a solution.
\end{theorem}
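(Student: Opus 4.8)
The plan is to build the solution by a fixed-point argument that decouples the Darcy block $(\u,p)$ from the reaction--diffusion block $(\vartheta,\Psi)$, and then to close the coupling by Schauder's theorem. Concretely, I would fix a velocity $\tilde\u$ in the ball
$$
\mathcal B_R=\Big\{\,\w\in L^2(0,T_f;L^3(\Omega)^d):\ \|\w\|_{L^2(0,T_f;L^3(\Omega)^d)}\le R\,\Big\},
$$
construct from it a pair $(\vartheta,\Psi)$ by solving \eqref{var_chaleur}--\eqref{var_concentration} with $\tilde\u$ frozen in the convection terms, then feed $\gamma\f(\vartheta,\Psi)$ as the right-hand side of the Darcy problem \eqref{var_darcy}--\eqref{var_div} to recover a new velocity $\u$, and set $\mathcal F(\tilde\u)=\u$. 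A fixed point of $\mathcal F$ is exactly a weak solution of \eqref{var_darcy}--\eqref{var_concentration}.

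First I would handle the two subproblems. For frozen $\tilde\u$ the system \eqref{var_chaleur}--\eqref{var_concentration} is linear in $(\vartheta,\Psi)$; after lifting the Dirichlet data by $\mathcal R(\vartheta_D),\mathcal R(\Psi_D)$ as in the proof of Proposition \ref{Proposition1}, its well-posedness in $H^1(0,T_f;L^2(\Omega))\cap L^2(0,T_f;H^1(\Omega))$ follows from a standard Galerkin construction: the coercivity \eqref{coercivite} of $\Lambda$ controls the coupled diffusion, while the convection is skew-symmetric because $\nabla\cdot\tilde\u=0$ and $\tilde\u\cdot\n=0$ force $\int_\Omega(\tilde\u\cdot\nabla)\eta\,\eta\,d\x=0$, so it drops out of the energy identity, and Gronwall's lemma closes the a priori bounds and the passage to the Galerkin limit. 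For the velocity step I would invoke the well-posedness and stability recalled in Remark \ref{remark} (from \cite{bergirraj}) to solve Darcy with source $\gamma\f(\vartheta,\Psi)$, and then Proposition \ref{prop_reg} to upgrade the regularity to $\u\in H^1(0,T_f;H^s(\Omega)^d)$. Its hypotheses are met because $\f$ is $C^1$ with bounded derivatives, so $\nabla\cdot\f(\vartheta,\Psi)$ is controlled by $\nabla\vartheta,\nabla\Psi\in L^2$; and since $H^s(\Omega)\hookrightarrow L^3(\Omega)$ for the geometries allowed in the statement (which is exactly why $s=1$ for convex and $s=\tfrac12$ for the polyhedron both suffice), this places $\u\in L^2(0,T_f;L^3(\Omega)^d)$ and makes the next convection term meaningful.

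Then I would run the fixed-point machinery on $\mathcal B_R$, which is closed, bounded and convex. For invariance I would use that, as noted in Remark \ref{remark}, the bound \eqref{estimation-theta-xi} on $(\vartheta,\Psi)$ is independent of the frozen velocity and therefore depends only on the data through \eqref{data3}; feeding $\gamma\f(\vartheta,\Psi)$ into the Darcy stability of Remark \ref{remark} and the regularity of Proposition \ref{prop_reg} then bounds $\|\u\|_{L^2(0,T_f;L^3(\Omega)^d)}$ by a data-only constant, and choosing $R$ equal to that constant gives $\mathcal F(\mathcal B_R)\subset\mathcal B_R$. For compactness I would observe that the returned $\u$ is bounded in $H^1(0,T_f;H^s(\Omega)^d)$, whence the Aubin--Lions--Simon lemma yields relative compactness of $\mathcal F(\mathcal B_R)$ in $L^2(0,T_f;L^3(\Omega)^d)$. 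For continuity I would take $\tilde\u_n\to\tilde\u$ in $L^2(0,T_f;L^3)$, extract weak limits from the uniform bounds, and pass to the limit in the two linear equations; the Darcy map is even Lipschitz by the linear estimate of Remark \ref{remark}, while the genuinely nonlinear passages in $\f(\vartheta,\Psi)$ and in $(\tilde\u_n\cdot\nabla)\vartheta_n$ close thanks to the strong $L^2(0,T_f;L^2)$ convergence supplied by Aubin--Lions. Schauder's fixed-point theorem then produces $\u=\mathcal F(\u)$, and unravelling the construction gives the solution $(\u,p,\vartheta,\Psi)$.

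I expect the main obstacle to be the convection term: it is the place where the two blocks genuinely interact, it is what forces the $L^2(0,T_f;L^3)$ spatial integrability of $\u$ (and hence the appeal to Proposition \ref{prop_reg} and the restriction on $\Omega$ in the statement), and controlling it in the continuity step demands the strong convergence from the compactness lemma rather than mere weak bounds.
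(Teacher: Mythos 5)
Your proposal is correct in substance but takes a genuinely different route from the paper. The paper eliminates the velocity rather than the temperature--concentration pair: it introduces the Darcy solution map $(\vartheta,\Psi)\mapsto\u$ (its $\mathcal F$), substitutes it into the convection terms, and solves the resulting nonlinear evolution problem for $(\vartheta,\Psi)$ alone by a Galerkin method --- the Cauchy--Lipschitz theorem gives unique approximations $(\vartheta_n,\Psi_n)$ in finite-dimensional subspaces $\mathbb W_n$ of $H^1(\Omega)$, the stability estimate \eqref{estimation-theta-xi} gives weak compactness, and the limit passage in the nonlinear convection terms uses exactly the two ingredients you also invoke, namely Proposition \ref{prop_reg} and the compact embedding of $H^s(\Omega)$ into $L^3(\Omega)$, to get strong $L^3$ convergence of the velocities. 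You instead freeze the velocity, solve two \emph{linear} problems (parabolic system, then Darcy), and close the coupling with Schauder; the nonlinearity is absorbed by the abstract fixed-point theorem rather than by finite-dimensional ODE theory. Both arguments rest on the same pillars (Remark \ref{remark} for the Darcy solve, Proposition \ref{prop_reg} for $H^s$ regularity, compactness of $H^s\subset L^3$ for the convective coupling), so neither is more elementary; yours is more modular, since each subproblem is classical and linear, but it costs you three separate verifications (invariance, compactness, continuity of the map), where the paper gets uniqueness of its approximations for free and uses compactness only once at the limit. One point you should repair: your ball $\mathcal B_R$ contains arbitrary $L^2(0,T_f;L^3(\Omega)^d)$ fields, yet your energy estimate for the frozen-velocity system needs the skew-symmetry $\int_\Omega(\tilde\u\cdot\nabla)\eta\,\eta\,d\x=0$, which holds only if $\tilde\u$ satisfies the weak divergence-free and zero normal flux condition \eqref{var_div}. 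You must therefore intersect $\mathcal B_R$ with that constraint set; this is harmless --- the set is closed and convex, and your map lands in it because the Darcy step enforces \eqref{var_div} --- but without it the linear parabolic solve loses its a priori bound and the scheme collapses.
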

\begin{proof} First, from Remark \ref{remark} we establish the existence of $(\u,p)$. Next, 
 considering  the application $\mathcal{F}$ which associates any $({\vartheta},\Psi)$ the solution $\u$ of $\eqref{var_darcy}-\eqref{var_div}$, then, the application
\begin{alignat*}2
({\vartheta},\Psi)\longmapsto
\left(
\begin{array}{lll}
 h_1-(\mathcal F({\vartheta},\Psi)\cdot\nabla){\vartheta}+\nabla\cdot(\lambda_{11}\nabla {\vartheta})+\nabla\cdot(\lambda_{12}\nabla\Psi)\\
 h_2-(\mathcal F({\vartheta},\Psi)\cdot\nabla)\Psi+\nabla\cdot(\lambda_{22}\nabla \Psi)+\nabla\cdot(\lambda_{21}\nabla{\vartheta})
\end{array}
\right)
\end{alignat*}
is Lipschitz-continuous on $H^1(\Omega)\times H^1(\Omega)$. Owing to density of $\mathcal D(\Omega)$ in $H^1(\Omega)$, there exists an increasing sequence $(\mathbb W_n)_n$ of  finite dimensional subspaces of $H^1(\Omega)$, such that $\cup_n \mathbb W_n$ is dense in $H^1(\Omega)$. Therefore it follows from the Cauchy$-$Lipschitz theorem that \eqref{var_chaleur}-\eqref{var_concentration} has a
unique solution $\big(({\vartheta}_n,\Psi_n)\big)_n$ in $\mathcal C^0(0,T_f;\mathbb W_n)^2$. 
Since $\big(({\vartheta}_n,\Psi_n)\big)_n$ is bounded using \eqref{estimation-theta-xi}, there exists a  subsequence $\big(({\vartheta}_n,\Psi_n)\big)_n$ which converges weakly to $({\vartheta},\Psi)$ in $H^1(\Omega)\times H^1(\Omega)$. 
Thanks to Proposition \ref{prop_reg}, the mapping  $\mathcal F$ is continuous from $H^1(\Omega)\times H^1(\Omega)$ into $H^s(\Omega)^d$.  
Combining with compactness embedding of $H^s(\Omega)$ in $L^3(\Omega)$ (see Amrouche et al. \cite[prop. 3.7]{ABDG}), there exists a subsequence still denoted by $\big(({\vartheta}_n,\Psi_n)\big)_n$  which converges weakly to $({\vartheta},\Psi)$ in $H^1(\Omega)\times H^1(\Omega)$ and such that the sequence $(\mathcal F({\vartheta}_n,\Psi_n))_n$ converges strongly to $\mathcal F({\vartheta},\Psi)$ in $L^3(\Omega)^d$. Then, the sequences $(\mathcal F({\vartheta}_n,\Psi_n)\cdot\nabla {\vartheta}_n)_n$ and $(\mathcal F({\vartheta}_n,\Psi_n)\cdot\nabla \Psi_n)_n$ respectively converge to $\mathcal F({\vartheta},\Psi)\cdot\nabla{\vartheta}$ and $\mathcal F({\vartheta},\Psi)\cdot\nabla\Psi$. 
Finally we deduce that $({\vartheta},\Psi) $ belongs to $\mathcal C^0(0,T_f,H^1(\Omega))\times\mathcal C^0(0,T_f,H^1(\Omega))$ and  is solution of problem \eqref{var_darcy}--\eqref{var_concentration}.
\end{proof}
The uniqueness result requires additional assumptions. 
 \begin{proposition}
 Assume that problem \eqref{var_darcy}--\eqref{var_concentration} has a solution $(\u,p,{\vartheta},\Psi)$ such that 
 $({\vartheta},\Psi)$ belongs to $L^\infty(\Omega)\times L^\infty(\Omega)$ and  there exists a nonnegative constant $\mu$ such that 
 \begin{equation}\label{cond_unicit_uptc} 
 \|{\vartheta}\|_{L^\infty(\Omega)}+\|\Psi\|_{L^\infty(\Omega)}\leq \mu.
\end{equation}
Then, this solution is unique.
 \end{proposition}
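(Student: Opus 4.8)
Suppose problem \eqref{var_darcy}--\eqref{var_concentration} admits two solutions, the first of which, $(\u_1,p_1,\vartheta_1,\Psi_1)$, satisfies the $L^\infty$ bound \eqref{cond_unicit_uptc}, and a second one $(\u_2,p_2,\vartheta_2,\Psi_2)$ enjoying only the regularity built into the weak formulation. The plan is to control the differences $\w=\u_1-\u_2$, $r=p_1-p_2$, $\sigma=\vartheta_1-\vartheta_2$, $\tau=\Psi_1-\Psi_2$. Since the data $h_1,h_2,\vartheta_\sharp,\Psi_\sharp$ are common to both solutions they cancel in the subtracted equations; since the Dirichlet traces $\vartheta_D,\Psi_D$ coincide, $\sigma,\tau\in H^1_D(\Omega)$; and since the initial values agree, $\w(\cdot,0)=0$, $\sigma(\cdot,0)=0$, $\tau(\cdot,0)=0$. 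Moreover $\w$ is weakly divergence free with $\w\cdot\n=0$ on $\partial\Omega$. The goal is a differential inequality for $E(t)=\|\w(\cdot,t)\|^2+\|\sigma(\cdot,t)\|^2+\|\tau(\cdot,t)\|^2$, closed by Gronwall's lemma.

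For the velocity I would subtract the two copies of \eqref{var_darcy} and test with $\v=\w$. The incompressibility relation \eqref{var_div} applied to $r$ removes the pressure term, and the $1$-Lipschitz character of $\f$ (its gradient has norm $\le 1$, as used before \eqref{ff}) gives $|\f(\vartheta_1,\Psi_1)-\f(\vartheta_2,\Psi_2)|\le(\sigma^2+\tau^2)^{1/2}$. With Cauchy--Schwarz and Young's inequality this yields
\begin{equation*}
\tfrac{1}{2}\tfrac{d}{dt}\|\w\|^2+\tfrac{\alpha}{2}\|\w\|^2\le\tfrac{\gamma^2}{2\alpha}\left(\|\sigma\|^2+\|\tau\|^2\right),
\end{equation*}
which is exactly the velocity half of the computation already carried out in Proposition \ref{Proposition1}.

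The temperature and concentration part is where hypothesis \eqref{cond_unicit_uptc} is essential. Subtracting \eqref{var_chaleur} (resp.\ \eqref{var_concentration}) for the two solutions, testing with $\eta=\sigma$ (resp.\ $\Phi=\tau$) and adding, the diffusion terms combine, through the coercivity \eqref{coercivite} applied to the vectors $(\partial_i\sigma,\partial_i\tau)$ and summed over $i$, into the lower bound $\beta(\|\nabla\sigma\|^2+\|\nabla\tau\|^2)$. Splitting the convective differences as $(\u_1\cdot\nabla)\vartheta_1-(\u_2\cdot\nabla)\vartheta_2=(\w\cdot\nabla)\vartheta_1+(\u_2\cdot\nabla)\sigma$, the terms $\int_\Omega(\u_2\cdot\nabla)\sigma\,\sigma\,d\x$ and $\int_\Omega(\u_2\cdot\nabla)\tau\,\tau\,d\x$ vanish by the usual antisymmetry, since $\nabla\cdot\u_2=0$ and $\u_2\cdot\n=0$. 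The genuinely new contributions are $\int_\Omega(\w\cdot\nabla)\vartheta_1\,\sigma\,d\x$ and $\int_\Omega(\w\cdot\nabla)\Psi_1\,\tau\,d\x$, and these are the main obstacle: $\w$ is only an $L^2$ field and no bound on $\nabla\vartheta_1$ is at hand. The remedy is to integrate each by parts, using $\nabla\cdot\w=0$ and $\w\cdot\n=0$, transferring the gradient onto the difference,
\begin{equation*}
\int_\Omega(\w\cdot\nabla)\vartheta_1\,\sigma\,d\x=-\int_\Omega\vartheta_1\,(\w\cdot\nabla\sigma)\,d\x,
\end{equation*}
and then to invoke the $L^\infty$ bound \eqref{cond_unicit_uptc} on the first solution, so that $\big|\int_\Omega\vartheta_1\,(\w\cdot\nabla\sigma)\,d\x\big|\le\mu\,\|\w\|\,\|\nabla\sigma\|$, and likewise for $\tau$. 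Young's inequality then absorbs $\|\nabla\sigma\|,\|\nabla\tau\|$ into the coercive diffusion, leaving
\begin{equation*}
\tfrac{1}{2}\tfrac{d}{dt}\left(\|\sigma\|^2+\|\tau\|^2\right)+\tfrac{\beta}{2}\left(\|\nabla\sigma\|^2+\|\nabla\tau\|^2\right)\le\tfrac{\mu^2}{\beta}\,\|\w\|^2.
\end{equation*}

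Finally I would add the two inequalities, discard the nonnegative diffusion and $\|\w\|^2$ terms on the left, and obtain $\tfrac{d}{dt}E(t)\le C\,E(t)$ with $C=\tfrac{\gamma^2}{\alpha}+\tfrac{2\mu^2}{\beta}$. Since $E(0)=0$, Gronwall's lemma forces $E\equiv0$ on $[0,T_f]$, hence $\u_1=\u_2$, $\vartheta_1=\vartheta_2$ and $\Psi_1=\Psi_2$. Substituting this back into the subtracted Darcy equation gives $\int_\Omega\v\cdot\nabla r\,d\x=0$ for all $\v\in L^2(\Omega)^d$, so the inf-sup condition \eqref{inf-sup} yields $\|\nabla r\|=0$; as $r\in H^1_\diamond(\Omega)$ has zero mean, $p_1=p_2$. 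The only delicate point is the treatment of the cross-convection terms; all the remaining manipulations reproduce those in the proof of Proposition \ref{Proposition1}.
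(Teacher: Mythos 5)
Your proposal is correct and takes essentially the same route as the paper's proof: you test the subtracted equations with the differences themselves, eliminate the convective terms via antisymmetry and an integration by parts that places the $L^\infty$ bound \eqref{cond_unicit_uptc} on the first solution, use the $1$-Lipschitz property of $\f$ for the Darcy part, and conclude with Gronwall's lemma. The only (harmless) differences are organizational --- you apply Gronwall once to the combined energy $E(t)$ whereas the paper runs two nested estimates (first bounding $\|{\vartheta}\|^2+\|\Psi\|^2$ by $\|\u\|^2_{L^2(0,t;L^2(\Omega)^d)}$, then applying Gronwall to $\u$ alone) --- and you additionally recover $p_1=p_2$ from the inf-sup condition \eqref{inf-sup}, a step the paper's proof leaves implicit.
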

\begin{proof} Let $(\u_1,p_1,{\vartheta}_1,\Psi_1)$ et $(\u_2,p_2,{\vartheta}_2,\Psi_2)$ be two  solutions of problem
	\eqref{var_darcy}--\eqref{var_concentration} such that $({\vartheta}_1,\Psi_1)$ belongs to  $L^\infty(\Omega)\times L^\infty(\Omega)$ and satisfies \eqref{cond_unicit_uptc}. We set
$$\u=\u_1-\u_2,\quad p=p_1-p_2,\quad {\vartheta}={\vartheta}_1-{\vartheta}_2, \quad \Psi=\Psi_1-\Psi_2.$$
we proceed in two steps.\\
1) Taking $\eta ={\vartheta}$ and $\Phi=\Psi$ in $\eqref{var_chaleur}$ and $\eqref{var_concentration}$, this yields
\begin{alignat*}2
&\hspace{0cm}\frac{1}{2}\frac{d}{dt}(\|{\vartheta}\|^2+\|\Psi\|^2)+\beta(\|\nabla{\vartheta}\|^2+\|\nabla \Psi\|^2)\\
&\hspace{3cm}\leq 
\|\u\|(\|{\vartheta}_1\|_{L^{\infty}(\Omega)}\|\nabla {\vartheta}\|+\|\Psi_1\|_{L^{\infty}(\Omega)}\|\nabla \Psi\|).
\end{alignat*}
By using Young's inequality, we obtain 
\begin{alignat*}2
&\hspace{-1cm}\frac{1}{2}\frac{d}{dt}(\|{\vartheta}\|^2+\|\Psi\|^2)\leq 
\frac{1}{2\beta}\|\u\|^2(\|{\vartheta}_1\|^2_{L^{\infty}(\Omega)}+\|\Psi_1\|^2_{L^{\infty}(\Omega)}),
\end{alignat*}
since ${\vartheta}_1$ et $\Psi_1$ satisfy \eqref{cond_unicit_uptc}
\begin{alignat*}2
&\hspace{0cm}\frac{d}{dt}\big(\|{\vartheta}\|^2+\|\Psi\|^2\big)\leq  \frac{\mu^2}{\beta}
\|\u\|^2.
\end{alignat*}
Integrating between $0$ and  $t$, we deduce that
\begin{alignat}2
&\hspace{0cm}\|{\vartheta}(\cdot,t)\|^2+\|\Psi(\cdot,t)\|^2\leq   \frac{\mu^2}{\beta}
\|\u\|^2_{L^2(0,t;L^2(\Omega)^d)}.\label{theta_xi_u}
\end{alignat}
2) Next we take $\v=\u$ in \eqref{var_darcy},  this yields 
$$\frac{1}{2}\frac{d}{dt}\|\u\|^2+\alpha\|\u\|^2\leq \gamma\big(\|{\vartheta}\|+\|\Psi\|\big)\|\u\|.$$
Once again using Young's inequality combining with \eqref{theta_xi_u} give us
$$\frac{d}{dt}\|\u\|^2\leq \frac{\mu^2\gamma^2}{\alpha\beta}\|\u\|^2_{L^2(0,t;L^2(\Omega)^d)}.$$
It follows from Gronwall's lemma that $\u$ is equal to 0. So, we derive from \eqref{theta_xi_u} that ${\vartheta}= \Psi=0 $. This concludes the proof.
\end{proof}
\section{The time semi-discrete problem}\label{section_SemiDiscret}
Now, we consider the time discretization of the coupled problem \eqref{var_darcy}--\eqref{var_chaleur} introduced in the last section. In order to make its analysis, we  first introduce a partition of the interval $[0,T_f]$ into subintervals $[t_{m-1},t_m]$, $1\leq m\leq M$, such that 
$0=t_0<t_1<\cdot\cdot\cdot<t_M=T_f$. We denote by $\tau_m$ the time step $t _m-t_{m-1}$, 
by $\tau$ the $M$-tuple $(\tau_1,...,\tau_M)$ and by $|\tau|$ the maximum of the $\tau_m$, $1\leq m\leq M$. We assume that  
$$
\displaystyle  \max_{m} \frac{\tau_m}{\tau_{m-1}}\leq\sigma_\tau,
\quad \mbox{where}\; \sigma_{\tau} \; \mbox{depends on the discretization.}
$$
We denote by $\u^m, p^m, {\vartheta}^m$ and $\Psi^m$  the approximate solutions at time $t_m$, 
and we assume  that all data are continuous in time, see later.

\noindent The implicit backward Euler's scheme applied to  \eqref{var_darcy}---\eqref{var_chaleur} results in: 
\begin{itemize}
\item {\bf Find} $ \; 
 (\u^m,p^m,{\vartheta}^m,\Psi^m)\in L^3(\Omega)^d\times H^1_\diamond(\Omega)\times H^1(\Omega)\times H^1(\Omega), 
$  such that
$$ {\vartheta}^m={\vartheta}_D^m,\quad{\rm and}\quad \Psi^m=\Psi_D^m\quad{\rm on}\quad\Gamma_D$$
\item {\bf Initialization:}  \; $ \u^0=\u_0, \; p^0=p_0, \; {\vartheta}^0={\vartheta}_0$  and $ \Psi^0=\Psi_0$  in \; $\Omega $
\item {\bf Darcy Step:} \;  for all $(\v,q) \in  L^2(\Omega)^d \times  H^1_\diamond(\Omega) $
\begin{alignat}{2}
\int_\Omega \frac{\u^m-\u^{m-1}}{\tau_m}\cdot\v d\x&+
\alpha\int_\Omega \u^m\cdot\v d\x+\int_\Omega \v\cdot\nabla p^m d\x \notag 
\\ & \hspace{+1cm}=\gamma 
\int_\Omega \f({\vartheta}^{m},\Psi^{m})\cdot\v d\x, \label{prblm-semi-discret1} \\
 \int_\Omega \u^m\cdot\nabla q\;d\x&=0.\notag 
  \end{alignat}
 \item {\bf Heat Step:} \; $ \forall\eta \in H^1_D(\Omega)$
\begin{alignat}{2}
& \int_\Omega \frac{{\vartheta}^m-{\vartheta}^{m-1}}{\tau_m}\eta \;d\x+\int_\Omega (\u^m\cdot\nabla){\vartheta}^m\eta \;d\x
 +\int_\Omega \lambda_{11} \nabla{\vartheta}^m\cdot\nabla \eta \;d\x 
 \notag \\
 &\hspace{+.1cm} +\int_\Omega \lambda_{12} \nabla\Psi^m\cdot\nabla \eta \;d\x
 =\int_\Omega h_1^m\eta \;d\x+\langle(\lambda_{11}{\vartheta}^m_\sharp+\lambda_{12}\Psi^m_\sharp),\eta \rangle_{\Gamma_N}.
 \label{prblm-semi-discret2}
\end{alignat}
 \item {\bf Concentration Step:} \; $ \forall\Phi \in H^1_D(\Omega)$
 \begin{alignat}{2}
& \int_\Omega \frac{\Psi^m-\Psi^{m-1}}{\tau_m}\Phi\;d\x+\int_\Omega (\u^m\cdot\nabla)\Psi^m\Phi\;d\x
+\int_\Omega \lambda_{22} \nabla\Psi^m\cdot\nabla \Phi\;d\x\notag\\
&\hspace{.1cm} +\int_\Omega \lambda_{21} \nabla{\vartheta}^m\cdot\nabla \Phi\;d\x
=\int_\Omega h_2^m\Phi\;d\x+ \langle(\lambda_{22}\Psi^m_\sharp+\lambda_{21}{\vartheta}^m_\sharp),\Phi\rangle_{\Gamma_N}.
\label{prblm-semi-discret3}
\end{alignat}
\end{itemize}
\begin{remark}
\begin{enumerate}[(a)]
\item In this scheme, it is commonly accepted that the   initial pressure $p^0$ can be taken 
equal to  the atmospheric pressure  which is in $H^1(\Omega)$. 
\item It can be noted that this problem makes sense since all $h_1$, $h_2$, ${\vartheta}_D$, ${\vartheta}_\sharp$, $\Psi_D$ and $\Psi_\sharp$ are continuous on time.
\end{enumerate}
\end{remark} 
Proving its well-posedness relies on rather different arguments as previously.
\begin{proposition}\label{prop-3-2}
For all continuous data functions
\begin{alignat*}2
& h_1,h_2\in \mathcal C^0(0,T_f;L^2(\Omega)), \quad
 {\vartheta}_D,\Psi_D\in \mathcal C^0(0,T_f;H^\frac{1}{2}(\Gamma_D)),\\
&{\vartheta}_\sharp,\Psi_\sharp\in \mathcal C^0(0,T_f; H^\frac{1}{2}_{00}(\Gamma_N)^{'}),\quad\u_0\in L^2(\Omega)^d\quad {\rm and}\quad{\vartheta}_0,\Psi_0\in L^2(\Omega),
\end{alignat*}
scheme \eqref{prblm-semi-discret1}-\eqref{prblm-semi-discret2} and \eqref{prblm-semi-discret3} complemented with 
initialization step has a solution $(\u^m,p^m,{\vartheta}^m,\Psi^m)$ in $L^3(\Omega)^d\times H^1_\diamond(\Omega)\times H^1(\Omega)\times H^1(\Omega)$. Moreover, this solution satisfies the following estimates for all $m$, $0\leq m\leq M$
\begin{equation}\label{stab-um-pm}
\|\u^m\|^2+\sum_{j=1}^m \tau_j \|\nabla p^j\|^2\leq c\Big(\sum_{j=1}^m \tau_j\|{\vartheta}^j\|^2+\sum_{j=1}^m \tau_j\|\Psi^j\|^2\Big)+\|\u_0\|^2,
\end{equation}
and
\begin{alignat}2
 \|{\vartheta}^m\|^2&+\|\Psi^m\|^2+  \beta\Big(\sum_{j=1}^m \tau_j\|{\vartheta}^j\|_{H^1(\Omega)}^2+\sum_{j=1}^m \tau_j\|\Psi^j\|_{H^1(\Omega)}^2\Big)\notag\\
& \leq \|{\vartheta}_0\|^2+\|\Psi_0\|^2+c\Big(\sum_{j=1}^m \tau_j\|h_1^j\|^2+\sum_{j=1}^m \tau_j\|h_2^j\|^2
\notag\\
& +\sum_{j=1}^m \tau_j\|{\vartheta}_\sharp^j\|_{H^{\frac{1}{2}}_{00}(\Gamma_N)'}^2
+ \sum_{j=1}^m \tau_j\|\Psi_\sharp^j\|_{H^{\frac{1}{2}}_{00}(\Gamma_N)'}^2
\notag\\ 
& +\sum_{j=1}^m  \tau_j\|{\vartheta}_D^j\|_{H^{\frac{1}{2}}(\Gamma_D)}^2+\sum_{j=1}^m \tau_j\|\Psi_D^j\|_{H^{\frac{1}{2}}(\Gamma_D)}^2
\label{stab-thetam-xim} \\
& +\sum_{j=1}^m\tau_j\Big\|\frac{{\vartheta}_D^j-{\vartheta}_D^{j-1}}{\tau_j}\Big\|^2_{H^\frac{1}{2}(\Gamma_D)}
+\sum_{j=1}^m\tau_j\Big\|\frac{\Psi_D^j-\Psi_D^{j-1}}{\tau_m}\Big\|^2_{H^\frac{1}{2}(\Gamma_D)}\Big).\notag 
\end{alignat}
\end{proposition}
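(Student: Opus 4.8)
The plan is to argue by induction on the time level $m$: assuming the previous iterate $(\u^{m-1},p^{m-1},\vartheta^{m-1},\Psi^{m-1})$ is known (the base case being the initialization step), I would first establish existence of $(\u^m,p^m,\vartheta^m,\Psi^m)$ and then derive the two a priori bounds. Because the time derivative has become a finite difference, each step is now a \emph{stationary} nonlinear system and the Cauchy--Lipschitz argument of the time-continuous case is unavailable; instead I would exploit the triangular structure of the coupling through a fixed-point map. Given $(\vartheta,\Psi)$, the Darcy step \eqref{prblm-semi-discret1} is a linear saddle-point problem with source $\gamma\f(\vartheta,\Psi)+\tau_m^{-1}\u^{m-1}$, uniquely solvable by the inf--sup condition \eqref{inf-sup}; its solution $\u$ lies in $L^3(\Omega)^d$ thanks to the elliptic regularity of the Darcy operator (Proposition \ref{prop_reg} combined with the embedding $H^s(\Omega)\hookrightarrow L^3(\Omega)$), which is exactly what renders the convection terms meaningful. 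Inserting this $\u$ into the coupled heat--concentration step \eqref{prblm-semi-discret2}--\eqref{prblm-semi-discret3} gives, by the coercivity \eqref{coercivite} of $\Lambda$, a well-posed linear elliptic problem whose solution $(\vartheta^{\ast},\Psi^{\ast})$ defines the map $(\vartheta,\Psi)\mapsto(\vartheta^{\ast},\Psi^{\ast})$. The Lipschitz continuity of $\f$ and the compact embedding $H^1(\Omega)\hookrightarrow L^2(\Omega)$ make it continuous and compact, while the bound \eqref{stab-thetam-xim} confines it to a ball, so a Schauder fixed-point theorem (or Brouwer, after a Galerkin truncation as in the time-continuous existence proof) supplies a fixed point $(\vartheta^m,\Psi^m)$ and hence $(\u^m,p^m)$.

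For the velocity--pressure bound \eqref{stab-um-pm} I would test \eqref{prblm-semi-discret1} with $\v=\u^m$, using $\int_\Omega\u^m\cdot\nabla p^m\,\d\x=0$ (take $q=p^m$ in the divergence constraint) and the discrete identity $(\u^m-\u^{m-1})\cdot\u^m=\tfrac12(|\u^m|^2-|\u^{m-1}|^2+|\u^m-\u^{m-1}|^2)$, then estimating the source through \eqref{ff} and Young's inequality. Multiplying by $2\tau_m$ and summing over $j=1,\dots,m$ telescopes to the velocity part of \eqref{stab-um-pm}. The pressure is recovered from the inf--sup condition \eqref{inf-sup} applied to $\nabla p^m=\gamma\f(\vartheta^m,\Psi^m)-\tau_m^{-1}(\u^m-\u^{m-1})-\alpha\u^m$; the only delicate term is $\|\tau_m^{-1}(\u^m-\u^{m-1})\|$, which I would control by the additional test $\v=\u^m-\u^{m-1}$ in \eqref{prblm-semi-discret1} (again the pressure contribution drops out by the divergence constraint), yielding a bound on $\sum_{j}\tau_j^{-1}\|\u^j-\u^{j-1}\|^2$ by the right-hand side of \eqref{stab-um-pm}.

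For \eqref{stab-thetam-xim} I would reproduce the proof of Proposition \ref{Proposition1} at the discrete level, testing \eqref{prblm-semi-discret2}--\eqref{prblm-semi-discret3} with the homogenized increments $\tilde{\vartheta}^m=\vartheta^m-\mathcal{R}(\vartheta_D^m)$ and $\tilde{\Psi}^m=\Psi^m-\mathcal{R}(\Psi_D^m)$, both in $H^1_D(\Omega)$, and adding the two identities. The diagonal convection terms $\int_\Omega(\u^m\cdot\nabla)\tilde{\vartheta}^m\,\tilde{\vartheta}^m\,\d\x$ and $\int_\Omega(\u^m\cdot\nabla)\tilde{\Psi}^m\,\tilde{\Psi}^m\,\d\x$ vanish since $\u^m$ is weakly divergence-free with $\u^m\cdot\n=0$; the coercivity \eqref{coercivite} bounds the four diffusion terms below by $\beta(\|\nabla\tilde{\vartheta}^m\|^2+\|\nabla\tilde{\Psi}^m\|^2)$, which Poincar\'e's inequality on $H^1_D(\Omega)$ promotes to the full $H^1$-norm and lets me absorb the lower-order lifting increments into the coercive gradient. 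The remaining lifting contributions, in particular the convection term $\int_\Omega(\u^m\cdot\nabla)\mathcal{R}(\vartheta_D^m)\,\tilde{\vartheta}^m\,\d\x$, are treated by H\"older's inequality ($\tfrac13+\tfrac16+\tfrac12=1$) and the Hopf bound \eqref{lift-epsilon} with $\varepsilon=\|\u^m\|_{L^3(\Omega)^d}^{-1}$, a choice that cancels the velocity norm exactly as in Proposition \ref{Proposition1}. After absorbing the $H^1$-terms by Young's inequality, multiplying by $2\tau_m$ and summing telescopes the difference quotients into $\|\vartheta^m\|^2+\|\Psi^m\|^2$ (up to lifting data) and generates the discrete derivative terms $\|\tau_j^{-1}(\vartheta_D^j-\vartheta_D^{j-1})\|^2_{H^{1/2}(\Gamma_D)}$, delivering \eqref{stab-thetam-xim} without any discrete Gronwall step.

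The step I expect to be the main obstacle is the pressure bound. Unlike Proposition \ref{Proposition1}, where $\|\nabla p\|$ follows at once, here the finite-difference quotient $\tau_m^{-1}(\u^m-\u^{m-1})$ must be controlled in $L^2$, and it is precisely the extra test by the increment $\u^m-\u^{m-1}$ that closes the argument for \eqref{stab-um-pm}. A secondary technical point is justifying $\u^m\in L^3(\Omega)^d$ so that the convection terms and the cancellation through $\varepsilon=\|\u^m\|_{L^3(\Omega)^d}^{-1}$ are legitimate; this is where the regularity of Proposition \ref{prop_reg} enters.
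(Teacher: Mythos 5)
Your proposal is correct, and for the heat--mass estimate \eqref{stab-thetam-xim} it coincides with the paper's proof: the same homogenization by the liftings, the same test functions $\tilde\vartheta^m,\tilde\Psi^m$, coercivity \eqref{coercivite} plus Cauchy--Schwarz, the Hopf-lemma choice $\varepsilon=\|\u^m\|_{L^3(\Omega)^d}^{-1}$, then multiplication by $\tau_m$ and summation, with no discrete Gronwall step. The velocity part of \eqref{stab-um-pm} (test with $\v=\u^m$, telescope) is also the paper's. Where you genuinely diverge is the pressure bound. The paper simply takes $\v=\nabla p^m$ in \eqref{prblm-semi-discret1}: the divergence constraints at steps $m$ and $m-1$ annihilate both the difference quotient $\tau_m^{-1}(\u^m-\u^{m-1})$ and $\alpha\u^m$ against $\nabla p^m$, leaving $\|\nabla p^m\|\le\gamma\|\f(\vartheta^m,\Psi^m)\|$ at once, so there is nothing left to control. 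You instead recover $\nabla p^m$ from the momentum equation via the inf--sup condition \eqref{inf-sup}, which forces you to bound $\sum_j\tau_j^{-1}\|\u^j-\u^{j-1}\|^2$, and you close that with the extra test $\v=\u^m-\u^{m-1}$. That argument is sound, and it even yields a by-product the paper does not state (a discrete $H^1$-in-time bound on the velocity), but it is precisely the ``main obstacle'' you flag, and it evaporates with the paper's choice of test function; the only price of your route is a harmless constant multiplying $\|\u_0\|^2$. One caveat is common to both arguments: at $m=1$ the cancellation of the term involving $\u^0=\u_0$ against $\nabla p^1$ (in the paper's test) or inside your increment test requires $\u_0$ to satisfy the discrete divergence constraint, an assumption the paper leaves implicit. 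Finally, the paper skips the existence part entirely, declaring it standard, so your fixed-point sketch is a supplement rather than a deviation.
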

\begin{proof} We skip the proof of existence of $(\u^m,p^m,{\vartheta}^m,\Psi^m)$ which is rather standard and simpler than in 
	Section \ref{Sec-MathPrel} and we prove only the estimates \eqref{stab-um-pm} and \eqref{stab-thetam-xim}.
To prove the first one, we take $\v=\u^m$ in  the first step \eqref{prblm-semi-discret1}. Standard arguments give
$$
\|\u^m\|^2\leq c\Big(\sum_{j=1}^m \tau_j\|{\vartheta}^j\|^2+\sum_{j=1}^m \tau_j\|\Psi^j\|^2\Big)+\|\u_0\|^2.
$$
In the second step  \eqref{prblm-semi-discret1}, we take $\v=\nabla p^m$, this yields
$$
\sum_{j=1}^m \tau_j \|\nabla p^j\|^2\leq c\Big(\sum_{j=1}^m \tau_j\|{\vartheta}^j\|^2+\sum_{j=1}^m \tau_j\|\Psi^j\|^2\Big)+\|\u_0\|^2.
$$
 To establish \eqref{stab-thetam-xim}, we perform the following change of variable   in \eqref{prblm-semi-discret2}-\eqref{prblm-semi-discret3}
 $$ 
 \tilde{\vartheta}^m = {\vartheta}^m-\mathcal R({\vartheta}^m_D) \qquad \mbox{and} \qquad \tilde\Psi^m=\Psi^m-\mathcal R(\Psi^m_D)
 $$
 Then,  joins together the obtained  equations and taking $\left( \eta ,\Phi\right)=\left( \tilde{\vartheta}^m, \tilde\Psi^m\right)$, we have
\begin{alignat*}2
&\frac{1}{2\tau_m}\big( 
\|\tilde{\vartheta}^m\|^2 - \|\tilde{\vartheta}^{m-1}\|^2+ \|\tilde{\vartheta}^m-\tilde{\vartheta}^{m-1}\|^2  \\ 
& \hspace{+3cm}+ \|\tilde\Psi^m\|^2-\|\tilde\Psi^{m-1}\|^2+\|\tilde\Psi^m-\tilde\Psi^{m-1}\|^2\big)  \\
&
+\int_\Omega \lambda_{11} (\nabla\tilde{\vartheta}^m)^2d\x + \int_\Omega \lambda_{22} (\nabla\tilde\Psi^m)^2d\x  \\ 
& \hspace{3cm}
+\int_\Omega \lambda_{12} \nabla\tilde\Psi^m\cdot\nabla \tilde{\vartheta}^m d\x
+\int_\Omega \lambda_{21} \nabla\tilde{\vartheta}^m\cdot\nabla \tilde\Psi^m d\x   \\
&\hspace{0.5cm}
=\int_\Omega h_1^m\tilde{\vartheta}^m d\x  + \langle (\lambda_{11}{\vartheta}^m_\sharp   +\lambda_{12}\Psi^m_\sharp),\tilde{\vartheta}^m\rangle_{\Gamma_N} \\ 
&\hspace{0.7cm}
+\int_\Omega h_2^m\tilde\Psi^m d\x+\langle(\lambda_{21}\Psi^m_\sharp+\lambda_{22}{\vartheta}^m_\sharp),\tilde\Psi^m\rangle_{\Gamma_N}\\
& \hspace{0.9cm}-
 \int_\Omega \frac{\mathcal{R}({\vartheta}_D^m)-\mathcal{R}({\vartheta}_D^{m-1})}{\tau_m}\tilde{\vartheta}^m d\x
 - \int_\Omega (\u^m\cdot\nabla)\mathcal{R}({\vartheta}_D^m)\tilde{\vartheta}^m d\x\\ 
 & \hspace{1.1cm}
 -\int_\Omega \lambda_{11} \nabla\mathcal{R}({\vartheta}_D^m)\cdot\nabla \tilde{\vartheta}^m d\x
+\int_\Omega \lambda_{12} \nabla\mathcal{R}({\Psi}_D^m)\cdot\nabla \tilde{\vartheta}^m d\x
\\
&\hspace{1.3cm}
-\int_\Omega \frac{\mathcal{R}({\Psi}_D^m)-\mathcal{R}({\Psi}_D^{m-1})}{\tau_m}\tilde\Psi^m d\x 
-\int_\Omega (\u^m\cdot\nabla)\mathcal{R}({\Psi}_D^m)\tilde\Psi^m d\x \\ 
&\hspace{1.5cm}
-\int_\Omega \lambda_{22} \nabla\mathcal{R}({\Psi}_D^m)\cdot\nabla \tilde\Psi^m d\x
-\int_\Omega \lambda_{21} \nabla\mathcal{R}({\vartheta}_D^m)\cdot\nabla \tilde\Psi^m d\x.
\end{alignat*}
Next we use the coercivity property \eqref{coercivite} and Cauchy-Schwarz inequality, we obtain
\begin{alignat*}3
&\frac{1}{2\tau_m}\Big( 
\|\tilde{\vartheta}^m\|^2 - \|\tilde{\vartheta}^{m-1}\|^2+ \|\tilde{\vartheta}^m-\tilde{\vartheta}^{m-1}\|^2  \\ 
& \hspace{+4cm}+ \|\tilde\Psi^m\|^2-\|\tilde\Psi^{m-1}\|^2+\|\tilde\Psi^m-\tilde\Psi^{m-1}\|^2\Big)  \\
&\hspace{.5cm}+\beta(\|\nabla\tilde{\vartheta}^m\|^2+\|\nabla\tilde\Psi^m\|^2)\hspace{0cm}
\\
&\hspace{.5cm}
\leq \Big[
\|h_1^m\|+\lambda_2\left(\|{\vartheta}^m_\sharp\|_{H^\frac{1}{2}_{00}(\Gamma_N)'}+\|\Psi^m_\sharp\|_{H^\frac{1}{2}_{00}(\Gamma_N)'}\right) \\ 
&\hspace{4.5cm} 
+\big\|\frac{\mathcal{R}({\vartheta}_D^m)-\mathcal{R}({\vartheta}_D^{m-1})}{\tau_m}\big\|+\lambda_2\|\nabla\Psi_D^m\|\Big]
\|\tilde{\vartheta}^m \big\|\\
&\hspace{0.5cm}
+\Big[
\|h_2^m\|+ \lambda_2\left(\|{\vartheta}^m_\sharp\|_{H^\frac{1}{2}_{00}(\Gamma_N)'} + \|\Psi^m_\sharp\|_{H^\frac{1}{2}_{00}(\Gamma_N)'}\right) \\ 
&\hspace{4.5cm} 
+\big\|\frac{\mathcal{R}({\Psi}_D^m)-\mathcal{R}({\Psi}_D^{m-1})}{\tau_m}\big\|+\lambda_2\|\nabla{\vartheta}_D^m\|
\Big]
\|\tilde\Psi^m\|\\
&\hspace{0.5cm}+\left(\|\u^m\|_{L^3(\Omega)^d}\|\mathcal{R}({\vartheta}_D^m)\|_{L^6(\Omega)}+\lambda_2\|\nabla\mathcal{R}({\vartheta}_D^m)\|\right)
\|\nabla\tilde{\vartheta}^m\|\\
&\hspace{0.5cm}
+\left(\|\u^m\|_{L^3(\Omega)^d}\|\mathcal{R}({\Psi}_D^m)\|_{L^6(\Omega)}
+\lambda_2\|\nabla\mathcal{R}({\Psi}_D^m)\|\right)\|\nabla\tilde\Psi^m\|.
\end{alignat*}
First, using Poincar\'e and Young's inequalities combining with \eqref{lift-epsilon}. Next,
multiplying by $\tau_m$ and summing over  $m$ give the desired estimate.
\end{proof}
\section{The fully discrete problem}\label{section:FDP}
In order to apply the spectral method to our problem, we first 
 assume that the domain $\Omega$ is the square or the cube $]-1,1[^d$, $d=2$ or $3$,
and  that all data $h_1$, $h_2$, ${\vartheta}_D$, ${\vartheta}_N$, $\xi_D$ and $\xi_N$ are continuous 
on $\overline\Omega \times [0,T_f]$ and on  $\partial\Omega \times [0,T_f] $.
We next  introduce, for each  nonnegative $n$,  the space $\mathbb P_n(\Omega)$ 
of restrictions to $\Omega$ of polynomials with $d$ variables and degree
with respect to each variable lower or equal to $ n$.
Concerning  the discrete spaces, let us first denote by $N$  a fixed positive integer. Then, we define the following
standard  working discrete spaces:
\begin{equation}\label{space_DisVP}
\left\{
\begin{array}{rcl}
\mathbb{X}_N=  \mathbb P_N(\Omega)^d, 
&&  \mathbb{Y}_N=  \mathbb P_N(\Omega), 
\\  \mathbb{Y}_N^{\diamond}=  \mathbb P_N(\Omega) \cap H^1_{\diamond}(\Omega),  &&
\; \mathbb{Y}_N^{\star}=  \mathbb P_N(\Omega) \cap H^1_{\star}(\Omega).
\end{array}
\right.
\end{equation}
   We recall that there exist a unique set of $N+1$ nodes $\left(\xi_j\right)_{0\leq j\leq N}$, with 
   $\left(\xi_0,\xi_N\right)=\left(-1,1\right)$  and a unique set of $N+1$ weights $\left(\rho_j\right)_{0\leq j\leq N}$, such that the following equality holds
\begin{equation}\label{GL}
\forall \phi   \in \mathbb{P}_{2N-1}(-1,1),\quad\int_{-1}^1 \phi (\zeta)\;d\zeta=\sum_{i=0}^N\phi (\xi_i)\rho_i.
\end{equation}
We also recall the following property, which is useful 
throughout this paper, see  for instance Bernardi et al.  \cite{BMR}
\begin{equation}\label{GL:Maj}
\forall \phi _N\in \mathbb{P}_N(-1,1),\quad \|\phi  _N\|^2_{L^2(-1,1)}\leq \sum_{i=0}^N\phi  _N^2(\xi_i)\rho_i\leq 3\|\phi _N\|_{L^2(-1,1)}^2.
\end{equation}
\par As standard in spectral methods, we introduce the grid  $\Xi $ defined by:
\begin{equation}\label{grid}
\Xi = 
\left \{
\begin{array}{rcl}
\left\{ (\xi_i,\xi_j); \quad 0\le i,j \le N\right\}  \;\qquad \qquad \qquad &\mbox{in the case} \; d=2, \\
\{ (\xi_i,\xi_j,\xi_k); \quad  0\le i,j,k \le N\}  \qquad \qquad  &\mbox{in the case} \; d=3.
\end{array}
\right.
\end{equation}
We denote by $\mathcal I_N$ the Lagrange interpolation operator at the nodes of the grid $\Xi$ with values in $ \mathbb P_N(\Omega)$, 
and by  $i^{\Gamma_D}_N$ 
the Lagrange interpolation operator  at the nodes of $\Xi \cap \overline\Gamma_D$
with values in the space of traces of functions in $\mathbb{P}_N(\Omega)$ on $\Gamma_D$.
\par Finally, we introduce the discrete product which is a scalar product on $\mathbb{P}_N(\Omega)$ from \eqref{GL:Maj}, defined for all continuous functions $u$ and $v$ on $\overline{\Omega}$ by
\begin{eqnarray}\label{PSD}
 (u,v)_N = 
 \left\{
 \begin {array}{rcl}
           \displaystyle \sum_{i=0}^N\sum_{j=0}^N u(\xi_i,\xi_j) \, v(\xi_i,\xi_j)\rho_i\rho_j\hspace{2.cm}&{\rm{if}}&\;d=2\\
          \displaystyle \sum_{i=0}^N\sum_{j=0}^N\sum_{k=0}^Nu (\xi_i,\xi_j,\xi_k) \, v(\xi_i,\xi_j,\xi_k)\rho_i\rho_j\rho_k\hspace{0.5cm}&{\rm{if}}&\;d=3.
           \end{array}
\right.
\end{eqnarray}
Also, on each edge or face $\Gamma_\ell$ of $\Omega$, we define a discrete product 
( for example  if $\Gamma_\ell =\{-1\}\times]-1,1[^{d-1}$ )
\begin{eqnarray}\label{PSD_bord}
(u,v)_N^{\Gamma_{\ell}}=
\left\{\begin{array}{rcl}
 \hspace{-0cm}
\displaystyle  \sum_{j=0}^N u(\xi_0,\xi_j)\, v(\xi_0,\xi_j)\rho_j  \hspace{2.5cm}&{\rm{if}}&\; d=2,\\
\hspace{-0cm}
\displaystyle \sum_{j=0}^N\sum_{k=0}^N u(\xi_0,\xi_j,\xi_k)\, v(\xi_0,\xi_j,\xi_k)\rho_j \rho_k\hspace{1.cm} &{\rm{if}}&\; d=3.
         \end{array}
\right.
\end{eqnarray}
A global product on $\Gamma_N$ is then defined by adding all ones
$$ \big(u,v\big)_M^{\Gamma_N}=\displaystyle\sum_{\ell\in \mathcal L} \big(u,v\big)_M^{\Gamma_\ell} $$
where $\mathcal L$ stands for the set of indices $\ell$ such that $\Gamma_\ell$ is contained in $\Gamma_N$.
 Then, for all  continuous data functions $h_1,h_2$   on $\overline\Omega\times[0,T_f]$
  and ${\vartheta}_N, \Psi_N$ on $\overline\Gamma_N\times[0,T_f]$, the discrete problem is  obtained from            \eqref{prblm-semi-discret1}--\eqref{prblm-semi-discret3} and its  initialization step:
\begin{itemize}
\item{\bf Find}:\;  $ \left(\u_N^m, p^m_N,{\vartheta}^m_N,\Psi^m_N\right) \in \mathbb{X}_N\times \mathbb{Y}^\diamond_N\times \mathbb{Y}_N\times \mathbb{Y}_N,$ such that
\item {\bf Initialization:}\;  $ \u^0_N=\mathcal I_N \u_0,\; 
 {\vartheta}_N^0=\mathcal I_N {\vartheta}_0$ \; and \;
$ \Psi_N^0=\mathcal I_N\Psi_0 $ \; in   $\Omega$
\item {\bf Darcy Step:}\; for all   $ \v_N\in \mathbb{X}_N$  and for all $q_N \in \mathbb{Y}^\diamond_N$
\begin{alignat}{2}
 \left(\frac{\u^m_N-\u_N^{m-1}}{\tau_m},\v_N\right)_N + 
\alpha\left(\u^m_N,\v_N\right)_N + 
\left(\v_N,\nabla p^m_N\right)_N
&= \gamma\left(\f({\vartheta}^{m}_N,\Psi_N^{m}),\v_N\right)_N \notag\\
\left(\u^m_N,\nabla q_N\right)_N &=0.  \label{discrete_pbl1}
\end{alignat}
\item {\bf Heat Step:}\; for all    $ \eta _N\in \mathbb{Y}^\star_N$
\begin{alignat}{2}
&{\vartheta}^m_N=\mathcal I^{\Gamma_D}_N {\vartheta}_D^m\quad {\rm on}\; \Gamma_D\notag\\
&\left(\frac{{\vartheta}^m_N-{\vartheta}_N^{m-1}}{\tau_m},\eta _N\right)_N
+\left((\u^m_N\cdot\nabla ){\vartheta}^m_N,\eta _N\right)_N +
\left(\lambda_{11}\nabla{\vartheta}^m_N,\nabla\eta _N\right)_N  \notag \\
&\hspace{.2cm}+
\left(\lambda_{12}\nabla\Psi^m_N,\nabla\eta _N\right)_N 
=\left(h_1^m,\eta _N\right)_N +\left((\lambda_{11}{\vartheta}^m_\sharp+\lambda_{12}\Psi_\sharp^m),\eta _N\right)_N^{\Gamma_N}.\label{discrete_pbl2}
\end{alignat}
\item {\bf Concentration Step:}\; for all    $ \Phi_N\in \mathbb{Y}^\star_N$
\begin{alignat}{2}
&\Psi^m_N=\mathcal I^{\Gamma_D}_N \Psi_D^m\quad {\rm on}\; \Gamma_D\notag\\
& \left(\frac{\Psi^m_N-\Psi_N^{m-1}}{\tau_m},\Phi_N\right)_N
+\left((\u^m_N\cdot\nabla )\Psi^m_N,\Phi_N\right)_N
+\left(\lambda_{22}\nabla\Psi^m_N,\nabla\Phi_N\right)_N  \notag\\
&\hspace{.2cm}
+\left(\lambda_{21}\nabla{\vartheta}^m_N,\nabla\Phi_N\right)_N
=\left(h_2^m,\eta _N\right)_N+\left((\lambda_{22}\Psi^m_\sharp+\lambda_{21}{\vartheta}^m_\sharp),\Phi_N\right)_N^{\Gamma_N}.\label{discrete_pbl3}
\end{alignat}
\end{itemize}
 As in Section \ref{Sec-MathPrel},  similar arguments can be 
used to prove 
the existence result of the fully discrete scheme  \eqref{discrete_pbl1}--\eqref{discrete_pbl3}: (we skip its proof)
\begin{proposition}
For all  continuous functions $h_1,h_2$ on $\overline \Omega\times[0,T_f]$, ${\vartheta}_D,\Psi_D$ on $\overline\Gamma_D\times[0,T_f]$ and ${\vartheta}_\sharp,\Psi_\sharp$ on $\overline\Gamma_N\times[0,\tilde{T}]$, the discrete scheme \eqref{discrete_pbl1}-\eqref{discrete_pbl2}-\eqref{discrete_pbl3}  complemented  with the initialization step has a solution.
\end{proposition}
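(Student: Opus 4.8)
The plan is to proceed by induction on the time level $m$, the initialization step furnishing the case $m=0$. Assume therefore that $(\u_N^{m-1},{\vartheta}_N^{m-1},\Psi_N^{m-1})$ is known; since all the spaces in \eqref{space_DisVP} are finite-dimensional, producing $(\u_N^m,p_N^m,{\vartheta}_N^m,\Psi_N^m)$ amounts to solving a square nonlinear algebraic system, and the natural tool is a Brouwer-type fixed-point argument.

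First I would decouple the Darcy step. For frozen $({\vartheta}_N^m,\Psi_N^m)\in\mathbb{Y}_N\times\mathbb{Y}_N$, the system \eqref{discrete_pbl1} is a linear discrete saddle-point problem for $(\u_N^m,p_N^m)$. Its solvability follows from the discrete analogue of the inf-sup condition \eqref{inf-sup}, obtained by testing with $\v_N=\nabla q_N\in\mathbb{X}_N$ and invoking the norm equivalence \eqref{GL:Maj}, together with the coercivity of $\alpha(\cdot,\cdot)_N$ on $\mathbb{X}_N$. Uniqueness of $\u_N^m$ and of $p_N^m$ in $\mathbb{Y}_N^\diamond$ then defines a map $\mathcal{F}_N:({\vartheta}_N^m,\Psi_N^m)\mapsto\u_N^m$ which, because $\f$ is $C^1$ and the system is linear in its data, is continuous (indeed smooth) on $\mathbb{Y}_N\times\mathbb{Y}_N$.

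Next I would eliminate the Dirichlet data. Writing $\tilde{\vartheta}_N^m={\vartheta}_N^m-\ell_N({\vartheta}_D^m)$ and $\tilde\Psi_N^m=\Psi_N^m-\ell_N(\Psi_D^m)$, where $\ell_N$ is a discrete lifting of $i_N^{\Gamma_D}{\vartheta}_D^m$, the unknowns $(\tilde{\vartheta}_N^m,\tilde\Psi_N^m)$ range over $(\mathbb{Y}_N^\star)^2$. Substituting $\u_N^m=\mathcal{F}_N({\vartheta}_N^m,\Psi_N^m)$ into \eqref{discrete_pbl2}--\eqref{discrete_pbl3} turns these two steps into a nonlinear system whose residual, represented through the discrete product $(\cdot,\cdot)_N$ (a genuine scalar product on $(\mathbb{Y}_N^\star)^2$ by \eqref{GL:Maj}), defines a continuous map $\mathcal{G}_N:(\mathbb{Y}_N^\star)^2\to(\mathbb{Y}_N^\star)^2$. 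By the standard corollary of Brouwer's theorem it suffices to exhibit a radius $R>0$ with $\big(\mathcal{G}_N(w),w\big)_N>0$ for every $w$ with $\|w\|=R$. Testing with $w=(\tilde{\vartheta}_N^m,\tilde\Psi_N^m)$ and mimicking the energy computation of Proposition \ref{prop-3-2}, the coercivity \eqref{coercivite} controls the diffusion from below by $\beta(\|\nabla\tilde{\vartheta}_N^m\|^2+\|\nabla\tilde\Psi_N^m\|^2)$, while the source, Neumann and lifting contributions are absorbed by Cauchy--Schwarz and Young's inequalities, using \eqref{lift-epsilon} to make the lifting terms small.

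The hard part will be the convection terms $\big((\u_N^m\cdot\nabla)\tilde{\vartheta}_N^m,\tilde{\vartheta}_N^m\big)_N$ and its analogue. In the continuous setting these vanish thanks to incompressibility and $\u\cdot\n=0$, but under the Gauss--Lobatto quadrature this antisymmetry is destroyed: the integrand has degree exceeding $2N-1$ in at least one variable, so \eqref{GL} no longer applies and $(\cdot,\cdot)_N$ is inexact. I therefore do not expect exact cancellation and would instead bound this term by $\|\u_N^m\|_{L^3(\Omega)^d}\,\|\tilde{\vartheta}_N^m\|_{L^6(\Omega)}\,\|\nabla\tilde{\vartheta}_N^m\|$ through a discrete H\"older inequality, then absorb the gradient into the coercive diffusion by Young's inequality with the weight $\varepsilon=1/\|\u_N^m\|_{L^3(\Omega)^d}$---exactly the device used in the semi-discrete estimate. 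This yields the desired sign of $\big(\mathcal{G}_N(w),w\big)_N$ on a large sphere, hence a zero of $\mathcal{G}_N$ and, unwinding the lifting and $\mathcal{F}_N$, a solution of \eqref{discrete_pbl1}--\eqref{discrete_pbl3} at level $m$, which closes the induction.
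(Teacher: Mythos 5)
Your overall architecture (induction over time levels, discrete inf-sup via $\v_N=\nabla q_N$ for the Darcy step, a discrete lifting of the Dirichlet data, then a Brouwer sign condition for the reduced map in $w=(\tilde{\vartheta}_N^m,\tilde\Psi_N^m)$) is sensible, and you correctly isolated the one point where the fully discrete case genuinely differs from Proposition \ref{prop-3-2}: under Gauss--Lobatto quadrature the self-convection term $\bigl((\u_N^m\cdot\nabla)\tilde{\vartheta}_N^m,\tilde{\vartheta}_N^m\bigr)_N$ no longer vanishes, because the integrand has degree about $3N$ while \eqref{GL} is exact only up to $2N-1$. (For comparison: the paper offers no argument at all here — it asserts that the arguments of Section \ref{Sec-MathPrel} carry over and explicitly skips the proof, glossing over precisely this quadrature issue.) The problem is that your resolution of the difficulty does not work. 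After the bound $\bigl|\bigl((\u_N^m\cdot\nabla)\tilde{\vartheta}_N^m,\tilde{\vartheta}_N^m\bigr)_N\bigr| \le c\,\|\u_N^m\|_{L^3(\Omega)^d}\,\|\tilde{\vartheta}_N^m\|_{L^6(\Omega)}\,\|\nabla\tilde{\vartheta}_N^m\|$, Young's inequality with \emph{any} weight leaves a remainder proportional to $\|\u_N^m\|^2_{L^3(\Omega)^d}\|\tilde{\vartheta}_N^m\|^2_{L^6(\Omega)}$, and since $\|\tilde{\vartheta}_N^m\|_{L^6(\Omega)}$ is only controlled by $\|\tilde{\vartheta}_N^m\|_{H^1(\Omega)}$, this remainder again contains $\|\nabla\tilde{\vartheta}_N^m\|^2$ multiplied by a factor involving $\|\u_N^m\|_{L^3(\Omega)^d}$, which is neither small nor independent of the unknown; it cannot be absorbed by $\beta\|\nabla\tilde{\vartheta}_N^m\|^2$. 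The device $\varepsilon=1/\|\u\|_{L^3(\Omega)^d}$ in Proposition \ref{Proposition1} is not a Young weight: it is the Hopf-lemma parameter in \eqref{lift-epsilon}, usable only because the lifting $\mathcal{R}({\vartheta}_D)$ can be \emph{constructed} with arbitrarily small $H^1$ norm. It applies to the lifting term $\int_\Omega(\u\cdot\nabla)\mathcal{R}({\vartheta}_D)\,\tilde{\vartheta}\,d\x$, never to the self-convection term, which in the continuous and semi-discrete analyses is not absorbed but vanishes identically by skew-symmetry and incompressibility.

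This gap is fatal for the sign condition as you set it up. Through the Darcy step, $\|\u_N^m\|$ grows linearly in $\|({\vartheta}_N^m,\Psi_N^m)\|$ (recall $|\f(x,y)|\le(x^2+y^2)^{1/2}$), so the non-cancelling convection term is sign-indefinite and effectively \emph{cubic} in $\|w\|$, whereas the good terms $\frac{1}{\tau_m}\|w\|_N^2+\beta\|\nabla w\|^2$ are quadratic; hence $\bigl(\mathcal{G}_N(w),w\bigr)_N>0$ cannot hold on spheres of large radius, and no radius $R$ is guaranteed to work without additional smallness hypotheses. To close the argument you must restore some cancellation or accept a restriction: for instance (i) replace the discrete transport term by its skew-symmetric form $\frac{1}{2}\bigl[\bigl((\u_N\cdot\nabla){\vartheta}_N,\eta_N\bigr)_N-\bigl((\u_N\cdot\nabla)\eta_N,{\vartheta}_N\bigr)_N\bigr]$, which vanishes for $\eta_N={\vartheta}_N$ and makes your energy argument go through verbatim — but this changes the scheme \eqref{discrete_pbl2}--\eqref{discrete_pbl3}; (ii) use over-integration so that the quadrature is exact for the degree of the convective integrand; or (iii) settle for conditional existence under a time-step restriction of the type $\tau_m\lesssim \beta\,C(N)^{-1}$ (with $C(N)$ coming from inverse inequalities and the size of $\u_N^{m-1}$ and the data), or for local existence via the Brezzi--Rappaz--Raviart framework of Section \ref{Apriori}, which is how the paper actually obtains a well-posed discrete solution, albeit under regularity assumptions absent from the statement of this proposition.
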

\section{A priori error estimate}\label{Apriori}
In this section, we derive {\it a priori} estimates for the velocity, pressure, heat and mass unknowns which make consistent.
The remainder of our analysis  is somewhat technical. In fact, following the  prior  works  by Brezzi-Rappaz-Raviart \cite{BRR}, 
we  here  develop  a  thorough  theory  of  { \it a priori}  error estimates  of spectral solutions of \eqref{discrete_pbl1}--\eqref{discrete_pbl3}. 
We  first define the linear operator $\mathcal T$, which associates with
 any data $(\f,\u_0)$ in $L^2(0,T_f;L^2(\Omega)^d)\times L^2(\Omega)^d$, the solution 
$\tilde U=(\u,p)$ of the following problem: 
\qquad $ \u|_{t=0}= \u_0\quad {\rm{in}} \; \Omega$  and for all 
$(\v,q)  \in L^2(\Omega)^d \times H_\diamond^1(\Omega)$
\begin{alignat}2\label{darcy_lin}
 \int_\Omega\partial_t \u\cdot\v\;d\x + \alpha\int_{\Omega}\u\cdot\v\;d\x + \int_\Omega\v\cdot\nabla p\;d\x &=\gamma \int_\Omega \f\cdot \v\;d\x,\\
 \int_\Omega \u\cdot \nabla q\;d\x&= 0. \notag
 \end{alignat}
We introduce   the linear operator $\mathcal L$  which associates with  any data 
$h_i, \omega_{iD},\omega_{i\sharp}$ and $\omega_{i0}, \, i,j=1,2$, $ i\neq j$  the solution $\omega_i$ of the following problem
\begin{equation}\label{chaleur-concentration-lin}
\left\{
\begin{aligned}
 &\partial_t \omega_i-\nabla\cdot(\lambda_{ii}\nabla\omega_i)-\nabla\cdot
 (\lambda_{ij}\nabla\omega_j)=h_i \qquad{ \rm in}\quad \Omega \times ]0,T_f[\\
 &\omega_i=\omega_{iD}\quad{\rm on}\quad \Gamma_D \quad{\rm and} \quad 
 \partial_\n \omega_i =\omega_{i\sharp}\; \;\quad \quad {\rm on}\quad \Gamma_N \times ]0,T_f[\\
 &\omega_i(\cdot,0)=\omega_{i0}\hspace{+4.8cm}{ \rm in}\quad \Omega\times ]0,T_f[.
\end{aligned}
\right .
\end{equation}
To simplify, we set the following space ({\it global space} )
\begin{equation*}
\begin{aligned}
\mathcal X= \big(H^1(0,T_f;L^2(\Omega)^d)&\cap L^2(0,T_f;L^3(\Omega)^d)\big)
\times  L^2(0,T_f;H_\diamond^1(\Omega))\\
&\hspace{0cm} \times\left( H^1(0,T_f;L^2(\Omega))\cap L^2(0,T_f;H^1(\Omega)) \right) \\ 
&\hspace{0cm} \times \left(H^1(0,T_f;L^2(\Omega))\cap L^2(0,T_f;H^1(\Omega))\right)
\end{aligned}
\end{equation*}
and denote by $\mathcal{E}$ the space of endomorphisms of $\mathcal{X}$. Setting $U = \left(\u,p,{\vartheta},\Psi\right)$.
We observe that  $(\omega_1,\omega_2)$ coincides with $({\vartheta},\Psi)$  such that
 problem \eqref{var_darcy}--\eqref{var_concentration} can equivalently be written as:  
\begin{eqnarray}
 &\mathcal{F}(U)=
 U -\left(
\begin{array}{lll}
\mathcal{T}&  0\\
0&\mathcal{L}\\
\end{array}
\right)
\left(
\begin{array}{l}
\mathcal G^1(U)\\
\mathcal G^2(U)\\
\end{array}
\right)=0
 \end{eqnarray}
 \begin{eqnarray*}
\mbox{where} \qquad              \left(
\begin{array}{l}
\mathcal{G}^1(U)\\
\mathcal{G}^2(U)\\
\end{array}
\right)=\left(
\begin{array}{l}
\f(\omega_1,\omega_2)\\
\left(h_i- \left(\u\cdot\nabla \right)\omega_i,\, \omega_{iD},\, \omega_{i\sharp},\, \omega_{i0}\right)\\
\end{array}
\right),\quad i=1,2.
              \end{eqnarray*}
\subsection{About the time discretization}\label{SubSec-TD}
With each family of values $(v^m)_{0\leq m\leq M}$, we associate the function $v_\tau$ which is affine on each interval $[t_{m-1},t_m]$, $1\leq m\leq M$, and equal to $v^m$ at $t= t_m$, $1\leq m\leq M$. For each continuous function $v$  on $[0,T_f]$, we also introduce the function $\pi_\tau^+v$ which is constant,
equal to $v(t_m) $   on each interval $]t_{m-1},t_m]$, $1\leq m\leq M$. 
We first consider the semi-discrete operator $\mathcal T_\tau$ which associated with any data $\f$ in $\mathcal C^0(0,T;L^2(\Omega)^d)$ and $\u_0$ in $L^2(\Omega)^d$, $\mathcal T_\tau(\f,\u_0)$ is equal to
$(\u_\tau,p_\tau)$ associated with  $(\u^m,p^m)$ solutions of the following semi-discrete problem 
\\

 \noindent Find $\left( \u^m, p^m \right)_{0\leq m\leq M}$ in $(L^2(\Omega)^d)^{M+1} \times H^1(\Omega)^M$ 
 such that  for all $  \v \in L^2(\Omega)^d$
\begin{alignat}{2}\label{darcy-semi-dis-lin}
&
 \u^0=\u_0 \quad{\rm in}\quad \Omega,\qquad 
{\rm and\; for\; a.\; e.\; }m,\;  1\leq m\leq M,\notag\\
& \int_\Omega \frac{\u^m-\u^{m-1}}{\tau_m}\cdot\v\;d\x+
\alpha\int_\Omega \u^m\cdot\v\;d\x+\int_\Omega \v\cdot\nabla p^m\;d\x
=\gamma
\int_\Omega \f^m\cdot\v\;d\x.
\end{alignat}
Next, let $\mathcal L_\tau$ be the semi-discrete operator defined  as follow:  \, For any data
\begin{align*}
\left(h_i,\omega_{iD},\omega_{i\sharp},\omega_{i0}\right)  \quad \mbox{in}\quad 
 &\mathcal C^0\left(0,T_f;L^2(\Omega)\right) 
\times \mathcal C^0\left(0,T_f;H^\frac{1}{2}(\Gamma_D)\right) \\ 
&\times \mathcal C^0\left(0,T_f;H^\frac{1}{2}_{00}(\Gamma_N)^{'}\right)
\times L^2(\Omega),\quad i=1,2
\end{align*}
$\mathcal L_\tau\left(h_i,\omega_{iD},\omega_{i\sharp},\omega_{i0}\right)$ 
is equal to $\omega_{i\tau}$ associated with    $\omega_i^m$  solution of  
\begin{equation}\label{chaleur-concentration-semi-discret}
\left\{
\begin{aligned}
 \omega_i^0=\omega_{i0}\; \mbox{in}\; \Omega, & \quad
 \omega_i^m=\omega_{iD}^m\; \mbox{on}\;
 \Gamma_D
 \qquad \mbox{and   for all}\; \eta \in H^1_D(\Omega),\\
 \int_{\Omega}\frac{\omega_i^m-\omega_i^{m-1}}{\tau_m}\eta \,d\x
 &+ \int_\Omega \lambda_{ii}\nabla\omega^m_i\cdot\nabla\eta ~d\x+\int_\Omega\lambda_{ij}
 \nabla\omega_j^m \cdot\nabla\eta \,d\x \\
 &\hspace{-0,5cm}\displaystyle=\int_\Omega h_i^m\eta ~d\x+\langle\lambda_{ii}\omega^m_{i\sharp}+\lambda_{ij}\omega^m_{j\sharp},\eta \rangle_{\Gamma_N},\quad i,j=1,2, i\neq j .
\end{aligned}
\right .
\end{equation}
With these definitions, the basic properties of the semi-discrete operators $\mathcal T_\tau$ and $\mathcal L_\tau$ are given in  the next proposition.
\begin{proposition}\label{prop_L-T_Oper}
The operators $\mathcal T_\tau$ and $\mathcal{L}_\tau$ satisfy the following three properties
\begin{enumerate}[(i)]
\item { \bf Stability:} \quad 
 $ \forall \left( \f,h_1, h_2\right)  \in L^2(0,T_f;L^2(\Omega)^d)\times L^2(0,T_f;L^2(\Omega))^2$, we have
\begin{eqnarray*}
\|\mathcal T_\tau(\f,0)\|_{\mathcal{C}^0(0,T_f;L^2(\Omega)^d)\times L^2(0,T_f;H^1(\Omega))}\leq c\|\pi_\tau^+ \f\|_{L^2(0,T_f;L^2(\Omega)^d)},\\
\sum_{i=1}^2\|\mathcal L_\tau(h_i,0,0,0)\|_{\mathcal{C}^0(0,T_f;H^1(\Omega))}\leq c\sum_{i=1}^2\|\pi_\tau^+ h_i\|_{L^2(0,T_f;L^2(\Omega))}.
\end{eqnarray*}
\item {\bf A priori error estimate:} We assume that the solution $\u$ belongs to $H^2(0,T_f;L^2(\Omega)^d)$ and that $\omega_i$, $i=1,2$, belong to $H^2(0,T_f;H^1(\Omega))$, then

\begin{equation} \label{T-Ttau} 
\begin{aligned}
\|(\mathcal{T}-\mathcal{T}_{\tau})(\f,\u_0)\|&_{\mathcal{C}^0(0,T_f;L^2(\Omega)^d)\times L^2(0,T_f;H^1(\Omega))}  \\ 
& \leq c|\tau| \, \| \u\|_{H^2(0,T;L^2(\Omega)^d)}, 
\end{aligned}
\end{equation}
\begin{equation}\label{L-Ltau}
\begin{aligned}
\sum_{i=1}^2\|(\mathcal L-\mathcal L_\tau)(h_i,\omega_{iD},\omega_{i\sharp},\omega_{i0})\|&_{\mathcal C^0(0,T_f;H^1(\Omega))} 
\\ 
& \leq c |\tau|\sum_{i=1}^2\|\omega_i\|_{H^2(0,T_f;H^1(\Omega))}.
\end{aligned}
\end{equation}
\item\quad {\bf Convergence:} \quad \\
  $ \forall \left( \f,h_1, h_2\right)  \in L^2(0,T_f;L^2(\Omega)^d)\times L^2(0,T_f;L^2(\Omega))^2$,
\begin{alignat*}2
&\lim_{|\tau|\rightarrow0} \biggl( 
\|(\mathcal{T}-\mathcal{T}_{\tau})(\f,0)\|_{\mathcal C^0(0,T_f;L^2(\Omega)^d)\times  L^2(0,T_f;H^1(\Omega))} \\ 
& \hspace{+3cm}+\;
\|(\mathcal{L}-\mathcal{L}_{\tau})(h_i,0,0,0)\|_{\mathcal C^0(0,T_f;H^1(\Omega))} 
\biggl)\; =\; 0.
\end{alignat*}
\end{enumerate}
\end{proposition}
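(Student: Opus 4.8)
The three statements form a classical stability--consistency--convergence triad for the backward-Euler discretization of the two \emph{linear} subproblems, so the plan is to establish (i) by discrete energy estimates in the spirit of Proposition~\ref{prop-3-2}, to deduce (ii) from a consistency (truncation) analysis combined with the stability of the error equation, and finally to obtain (iii) from (ii) by a density argument anchored on the uniform stability (i).

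For the stability of $\mathcal T_\tau(\f,0)$ I would test the momentum relation in \eqref{darcy-semi-dis-lin} with $\v=\u^m$ and use the algebraic identity $2(\u^m-\u^{m-1},\u^m)=\|\u^m\|^2-\|\u^{m-1}\|^2+\|\u^m-\u^{m-1}\|^2$; the divergence constraint inherent in the Darcy problem removes the pressure coupling, and after Cauchy--Schwarz, Young's inequality to absorb $\alpha\|\u^m\|^2$, multiplication by $\tau_m$ and summation over $m$, one gets the $\mathcal C^0(0,T_f;L^2)$ bound controlled by $\|\pi_\tau^+\f\|_{L^2(0,T_f;L^2)}$. For the pressure I would take $\v=\nabla p^m$: the constraint forces $(\u^m,\nabla p^m)=(\u^{m-1},\nabla p^m)=0$, so $\|\nabla p^m\|\le\gamma\|\f^m\|$ for each $m$, whence the $L^2(0,T_f;H^1)$ bound (equivalently one invokes the inf-sup property \eqref{inf-sup}). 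For $\mathcal L_\tau(h_i,0,0,0)$ the iterates lie in $H^1_D(\Omega)$; testing the two relations of \eqref{chaleur-concentration-semi-discret} with $\eta=\omega_i^m$ and adding, the coercivity \eqref{coercivite} applied to $\z=(\nabla\omega_1^m,\nabla\omega_2^m)$ turns the whole diffusion block into $\beta\sum_i\|\nabla\omega_i^m\|^2$, giving the basic $L^2(0,T_f;H^1)\cap\mathcal C^0(0,T_f;L^2)$ bound. To reach the stronger $\mathcal C^0(0,T_f;H^1)$ norm of (i), I would test instead with the discrete time derivative $\partial_\tau\omega_i^m=(\omega_i^m-\omega_i^{m-1})/\tau_m$, telescoping the diagonal part $\int_\Omega\lambda_{ii}\nabla\omega_i^m\cdot\nabla(\omega_i^m-\omega_i^{m-1})\,\d\x$ by the pointwise identity and controlling the right-hand side by $\tfrac12\tau_m\|h_i^m\|^2+\tfrac12\tau_m\|\partial_\tau\omega_i^m\|^2$.

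For the error estimate, letting $(\u,p)=\mathcal T(\f,\u_0)$ and $\omega_i=\mathcal L(\dots)$ be the exact solutions, I would insert $\u(t_m)$ and $\omega_i(t_m)$ into the schemes; the residual is exactly the consistency error $R^m=\partial_\tau(\text{exact})^m-\partial_t(\text{exact})(t_m)$, which the integral form of Taylor's formula bounds by $\sum_m\tau_m\|R^m\|^2\le c\,|\tau|^2\,\|\cdot\|_{H^2(0,T_f;\cdot)}^2$ --- this is where the assumed $H^2$-in-time regularity of $\u$ and of $\omega_i$ enters and produces the optimal first-order factor $|\tau|$. Since the discretizations are linear, the errors $\u(t_m)-\u^m$ and $\omega_i(t_m)-\omega_i^m$ solve the very same schemes with source $R^m$ and vanishing initial and boundary data, so applying the stability estimates of (i) to these error problems yields \eqref{T-Ttau} and \eqref{L-Ltau}.

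For convergence I would argue by density: given $(\f,h_i)\in L^2(0,T_f;L^2)$, approximate them in that norm by smooth data for which (ii) already gives $\|(\mathcal T-\mathcal T_\tau)(\cdot)\|\to0$ and $\|(\mathcal L-\mathcal L_\tau)(\cdot)\|\to0$ as $|\tau|\to0$; the uniform stability (i) bounds $\mathcal T_\tau$, $\mathcal L_\tau$ (and $\mathcal T$, $\mathcal L$) on the difference of data independently of $\tau$, so a standard $3\varepsilon$ splitting closes the limit. The main obstacle I anticipate is the $\mathcal C^0(0,T_f;H^1)$ stability of $\mathcal L_\tau$ from a merely $L^2(0,T_f;L^2)$ source: this is a discrete parabolic smoothing estimate, and the \emph{non-symmetric} cross-diffusion (Soret $\neq$ Dufour) prevents the off-diagonal contributions $\int_\Omega\lambda_{12}\nabla\omega_2^m\cdot\nabla(\omega_1^m-\omega_1^{m-1})\,\d\x$ and $\int_\Omega\lambda_{21}\nabla\omega_1^m\cdot\nabla(\omega_2^m-\omega_2^{m-1})\,\d\x$ from telescoping cleanly. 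Controlling them requires exploiting that their symmetric combination telescopes while the antisymmetric remainder is absorbed through the coercivity \eqref{coercivite} together with a discrete Gronwall argument, and reconciling this with the optimal $|\tau|$ rate in \eqref{L-Ltau} is the most delicate point of the whole proof.
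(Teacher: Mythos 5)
Your proposal is correct and follows essentially the same route as the paper: stability (i) comes from the discrete energy estimates of Proposition \ref{prop-3-2} specialized to the linear case (testing with $\u^m$, $\nabla p^m$ and $\omega_i^m$), the error estimate (ii) is derived, exactly as in the paper, by forming the error equation with the consistency residual $\varepsilon^m_\u=\frac{\u(\cdot,t_m)-\u(\cdot,t_{m-1})}{\tau_m}-\partial_t\u(\cdot,t_m)$, bounding it via Taylor's formula with integral remainder and applying the stability of the error scheme (computations the paper defers to \cite{bergirraj}), and (iii) is the standard stability-plus-density argument, which the paper does not detail at all. Your closing caveat is in fact sharper than the paper's own proof: the paper asserts that (i) is a ``direct consequence'' of Proposition \ref{prop-3-2}, but estimate \eqref{stab-thetam-xim} controls only the $\ell^\infty(L^2)\cap\ell^2(H^1)$ norm, so the claimed $\mathcal C^0(0,T_f;H^1(\Omega))$ stability of $\mathcal L_\tau$ --- and the non-symmetric cross-diffusion obstruction you identify for the discrete-time-derivative test --- is indeed passed over in silence there.
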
 
\begin{proof}
The stability property is a direct consequence of Proposition $ \ref{prop-3-2} $ in the linear case. So, we only prove the property of a priori error estimate. First, we estimate the error on the velocity between the semi-discrete problem $\eqref{darcy-semi-dis-lin}$ and the continuous one $\eqref{darcy_lin}$. 
The error equation is obtained by subtracting $(\ref{darcy-semi-dis-lin})$ from $(\ref{darcy_lin})$ at time $t_m$.  for a. e. $ m,\;  1\leq m\leq M,$ \; 
$\forall \left( \v, q\right)  \in L^2(\Omega)^d \, \times \,H^1_0(\Omega)  $ 
\begin{alignat*}{2}
 \int_\Omega \frac{e_\u^m-e_\u^{m-1}}{\tau_m}\cdot\v\;d\x +
\alpha\int_\Omega e_\u^m\cdot\v\;d\x &+\int_\Omega \v\cdot\nabla (p(\cdot,t_m)-p^m)\;d\x
\\
& =\gamma
\int_\Omega \varepsilon^m_\u\cdot\v\;d\x,
\\
& \int_\Omega e_\u^m\cdot\nabla q\;d\x=0,
\end{alignat*}
where the sequence $(e_\u^m)_m$ is defined by $e^m_\u=\u(\cdot,t_m)-\u^m$ and satisfied $
e^0_\u=0$ and the consistence error $\varepsilon^m_\u$ is given by
$$ \displaystyle \varepsilon^m_\u= \frac{\u(\cdot,t_m)-\u(\cdot,t_{m-1})}{\tau_m}-\partial_t\u(\cdot,t_m).$$
\noindent As the same arguments in \cite[Prop. 3.2, Cor. 3.1]{bergirraj} we obtain
\eqref{T-Ttau} and   \eqref{L-Ltau}.
\end{proof} 
\subsection{About the space discretization}\label{SubSec-SD}
 Henceforth, we denote $v_{N\tau}$  the function which is affine on each interval $[t_{m-1} , t_m]$ and
equal to $v^m_{N}$ at each time $t_m , 0 \leq m \leq M$. We also define the discrete
operator $\mathcal T_{N\tau}$ as follow: 

\par For any data $\f$ in $\mathcal C^0(0,T_f; L^2(\Omega)^d)$ and $\u_0$ in $L^2(\Omega)^d$, $\mathcal T_{N\tau}(\f,\u_0) $ is equal to $(\u_{N\tau},p_{N\tau})$  which interpolates $(\u_N^m,p_N^m)$ solutions of: \qquad For all $  \left( \v_N,\, q_N \right) \, \in \,  \mathbb{X}_N \times \, \mathbb{Y}_N$
\begin{equation}\label{50}
\left \{
\begin{array}{rclr}
&&\u_N^0 = \mathcal I_N  \u_0 \quad\mbox{in} \; \Omega  \\
&& \left(\displaystyle\frac{\u_N^m-\u_N^{m-1}}{\tau_m},\v_N\right)_N+ 
 \alpha\left(\u_N^m,\v_N\right)_N+\left(\v_N,\nabla p_N^m\right)_N 
=\,\gamma  \left(\f^m,\v_N\right) ,\\ 
&&\left(\u_N^m,\nabla q_N\right)_N=0.
\end{array}
\right .
\end{equation}
Finally, we denote by $ \mathcal L_{N\tau}$ the operator which  associates with any data
\begin{alignat*}{2}
\left(h_i,\,\omega_{iD},\,\omega_{i\sharp},\,\omega_{i0}\right) & \in \quad
\mathcal C^0(0,T_f;L^2(\Omega)) 
\times \mathcal C^0(0,T_f;H^\frac{1}{2}(\Gamma_D)) \\ 
& \times \mathcal C^0(0,T_f;H^\frac{1}{2}_{00}(\Gamma_N)^{'})
\times L^2(\Omega),\quad i=1,2
\end{alignat*}

the function $\omega_{iN\tau} $ interpolates $\omega_{iN}^m$ solutions of the following problem: 
\par  $ \omega_{iN}^0=\mathcal I_N \omega_{i0}$  in  $\Omega,$ \;
$\omega^m_{iN}=i^{\Gamma_D}_N \omega^m_i $ on  $\Gamma_D $ \quad and 
 for all  $ \eta _N\in \mathbb{Y}^\star_N$
\begin{alignat}2
 \left(\frac{\omega^m_{iN}-\omega_{iN}^{m-1}}{\tau_m},\eta _N\right)_N &+\left(\lambda_{ii}\nabla\omega^m_{iN},\nabla\eta _N\right)_N
+\left(\lambda_{ij}\nabla\omega^m_{jN},\nabla\eta _N\right)_N \notag\\
&\hspace{0cm}
=\int_\Omega h_i^m\,\eta _N d\x+
\langle
\lambda_{ii}\omega^m_{i\sharp}+\lambda_{ij}\omega_
{j\sharp}^m,\,\eta _N
\rangle_{\Gamma_N},\qquad i\neq j. \label{51}
\end{alignat}
If we set $\tilde U_{N\tau}$ the couple $(\u_{N\tau},p_{N\tau})$ and $U_{N\tau} $ the triplet $(\tilde U_{N\tau },{\vartheta}_{N\tau},\Psi_{N\tau})$, problem \eqref{50}-\eqref{51} can  equivalently be written as
\begin{eqnarray}\label{discrete_pbl}
 &\mathcal{F}_{N\tau}(U_{N\tau})=
 U_{N\tau}-\left(
\begin{array}{lll}
\mathcal{T}_{N\tau}& 0\\
0&\mathcal{L}_{N\tau}\\
\end{array}
\right)
\left(
\begin{array}{l}
\mathcal{G}_{{N\tau}}^{1}(U_{N\tau})\\
\left(\mathcal{G}_{{N\tau}}^{2}(U_{N\tau}),\omega_{iD}, \tilde\omega_{i\sharp},\omega_{i0}\right)\\
\end{array}
\right)=0 \qquad
  \end{eqnarray}
such that for all  $ \v_N\in \mathbb{X}_N $ and for all  $ \eta _N\in \mathbb{Y}_N^\star$
\begin{alignat*}{2}
\langle \mathcal{G}_{N\tau}^{1} \left(U_{N\tau}\right),\v_N \rangle&=
\left(\f({\vartheta}_{N\tau},\Psi_{N\tau}),\v_N\right)_N, \\ 
\langle \tilde{\mathcal{G}}_{N\tau}^{2}(U_{N\tau}),\eta _N \rangle
&=\left(h_i,\eta _N\right)_N
-\left((\u_{N\tau}\cdot\nabla)\omega_{iN\tau},\eta _N\right)_N, \\ 
\mbox{and} \qquad
\langle \tilde\omega_{i\sharp},\eta _N\rangle &=\left(\lambda_{ii}\omega_{i\sharp}+\lambda_{ij}\omega_{j\sharp},\eta _N\right)^{\Gamma_N}_N,\qquad i,j=1,2, i\neq j.
\end{alignat*}    
\par The next statement makes the error estimates between discrete and semi-discrete operators, 
which can be obtained by  standard arguments in spectral methods (for instance, see \cite{BMR}).
We skip the proof  and easily conclude
\begin{proposition}
For each  $N \geq 2$,  when the  solution $\big(\u^m,p^m\big)$ of problem $(\ref{darcy-semi-dis-lin})$ belongs to $H^s(\Omega)^d\times H^{s+1}(\Omega)$, for $s\geq 1$, the following error estimate holds
\begin{equation}
\begin{aligned}
& 
\|(\mathcal{T}_{\tau}-\mathcal{T}_{N\tau})(\f,\u_0)\|_{{\mathcal C^0(0,T_f;L^2(\Omega)^d)\times L^2(0,T_f;H^{1}(\Omega))}}\\
&\hspace{+1cm}\leq cN^{-s}\|\mathcal{T}_\tau(\f,g,p_b,\u_0)\|_{H^1(0,T_f;H^s(\Omega)^d)\times L^2(0,T_f;H^{s+1}(\Omega))}.
\label{T_tau-T_Ntau} 
 \end{aligned}
 \end{equation}
Moreover, if  \; $  \mathcal L_\tau(h_i,\omega_{iD},\omega_{i\sharp},\omega_{i0}) \, \mbox{belongs to} \,  H^{s+1}(\Omega),$ 
then 
\begin{equation}
\begin{aligned}
& \sum_{i=1}^2\|(\mathcal L_{\tau}- \mathcal L_{\tau N})(h_i,\omega_{iD},\omega_{i\sharp},\omega_{i0})\|_{\mathcal C^0(0,T_f;H^1(\Omega))}  \\ 
& \hspace{+2cm}\leq c
N^{-s}\sum_{i=1}^2\|\mathcal L_\tau(h_i,\omega_{iD},\omega_{i\sharp},\omega_{i0})\|_{H^1(0,T_f;H^s(\Omega))}.\label{L_tau-L_Ntau}
\end{aligned}
\end{equation}
\end{proposition}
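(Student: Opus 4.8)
The plan is to treat the two estimates \eqref{T_tau-T_Ntau} and \eqref{L_tau-L_Ntau} separately, but in both cases to follow the classical spectral consistency-plus-stability argument at each fixed time level $t_m$ and then to sum against the weights $\tau_m$, $1\le m\le M$. Since the time-stepping is identical in the semi-discrete and fully discrete schemes and the operators $\mathcal T$ and $\mathcal L$ are \emph{linear} (the convection and the nonlinearity $\f$ having been relegated to $\mathcal G^1$ and $\mathcal G^2$), no Gronwall accumulation in time is needed beyond what is already contained in the stability estimates of Proposition \ref{prop-3-2}; the whole difficulty is the replacement of the exact integrals by the Gauss--Lobatto products $(\cdot,\cdot)_N$ and $(\cdot,\cdot)_N^{\Gamma_N}$.

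First I would prove \eqref{T_tau-T_Ntau}. I fix $m$, let $(\u^m,p^m)$ and $(\u_N^m,p_N^m)$ solve \eqref{darcy-semi-dis-lin} and \eqref{50}, and split the error through a projection, $\u^m-\u_N^m=(\u^m-\Pi_N\u^m)+(\Pi_N\u^m-\u_N^m)$, where $\Pi_N$ maps onto $\mathbb X_N$ while respecting the discrete divergence constraint. The first term is the pure approximation error, bounded by the standard spectral estimate $\|\u^m-\Pi_N\u^m\|\le cN^{-s}\|\u^m\|_{H^s(\Omega)^d}$, and analogously $\|p^m-\Pi_N p^m\|_{H^1}\le cN^{-s}\|p^m\|_{H^{s+1}(\Omega)}$. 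For the polynomial remainder I test the discrete equation \eqref{50} against $\Pi_N\u^m-\u_N^m$, use the coercivity furnished by the $\alpha$-term together with the norm equivalence \eqref{GL:Maj} to pass from $(\cdot,\cdot)_N$ to $\|\cdot\|$, and recover the pressure through the discrete inf-sup condition on $\mathbb X_N\times\mathbb Y_N^\diamond$ (the spectral analogue of \eqref{inf-sup}). The quadrature consistency terms $\big((\cdot,\cdot)_N-(\cdot,\cdot)\big)$ acting on $\Pi_N\u^m$ and on the data are estimated using the exactness \eqref{GL} on $\mathbb P_{2N-1}$ and the interpolation error of $\mathcal I_N$, which again yields a factor $N^{-s}$. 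Multiplying by $\tau_m$ and taking the maximum over $m$ for the velocity (respectively the $\ell^2_\tau$-sum for the pressure) produces the $\mathcal C^0(0,T_f;L^2)\times L^2(0,T_f;H^1)$ bound.

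The proof of \eqref{L_tau-L_Ntau} follows the same pattern applied to the coupled pair $(\omega_1,\omega_2)$, with the role of the $\alpha$-coercivity now played by the coercivity \eqref{coercivite} of the matrix $\Lambda$, which absorbs the cross-diffusion terms $\lambda_{ij}\nabla\omega_j\cdot\nabla\eta_N$ once the two equations are summed and tested against the polynomial remainders $\Pi_N\omega_i^m-\omega_{iN}^m$. The initial and Dirichlet contributions are handled through $\mathcal I_N\omega_{i0}$ and $i^{\Gamma_D}_N\omega_{iD}^m$, whose interpolation errors are $O(N^{-s})$ under the assumed regularity, while the Neumann data enter only through the boundary product $(\cdot,\cdot)_N^{\Gamma_N}$, whose consistency error is controlled by the one-dimensional form of \eqref{GL:Maj}. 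Summing the per-step estimates against the $\tau_m$ and taking the maximum over $m$ gives the $\mathcal C^0(0,T_f;H^1(\Omega))$ norm on the left and the regularity norm of $\mathcal L_\tau(h_i,\omega_{iD},\omega_{i\sharp},\omega_{i0})$ on the right.

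The step I expect to be the main obstacle is the quadrature consistency error of the diffusion bilinear form with the \emph{variable} coefficients $\lambda_{ij}$: the integrand $\lambda_{ij}\nabla\omega_N\cdot\nabla\eta_N$ is generally not in $\mathbb P_{2N-1}$, so \eqref{GL} does not apply directly. One must instead insert the interpolant $\mathcal I_N\lambda_{ij}$, bound $\|\lambda_{ij}-\mathcal I_N\lambda_{ij}\|_{L^\infty(\Omega)}$ using the continuity assumed in \eqref{lambda}, and treat the remaining polynomial part with \eqref{GL:Maj}, all while keeping the gradient norms on the favourable side of the coercivity inequality \eqref{coercivite}. This is precisely the delicate estimate routinely carried out in the spectral literature, which is why the statement defers to \cite{BMR}.
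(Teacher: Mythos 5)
Your proposal is correct and is essentially the approach the paper itself relies on: the paper skips the proof entirely, saying only that the estimates ``can be obtained by standard arguments in spectral methods (for instance, see \cite{BMR})'', and your argument --- projection splitting $\u^m-\u_N^m=(\u^m-\Pi_N\u^m)+(\Pi_N\u^m-\u_N^m)$, discrete stability applied to the polynomial error equation to handle the time accumulation, discrete inf-sup for the pressure, and Gauss--Lobatto exactness plus interpolation for the quadrature consistency terms --- is precisely that standard argument. Your identification of the variable-coefficient quadrature error as the delicate step (requiring insertion of $\mathcal I_N\lambda_{ij}$) is also well judged; if anything, the paper's hypothesis \eqref{lambda} (mere continuity of the $\lambda_{ij}$) is too weak to guarantee the $N^{-s}$ rate survives that step, but this is a gap in the paper's stated assumptions rather than in your argument.
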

\noindent Finally,  we are in position to give a main result of this section.
\begin{theorem}
	Assume that the data functions 
	$h_1,h_2$ in $L^2(0,T_f; H^\sigma(\Omega)), {\vartheta}_{D},\Psi_D$ in $L^2(0,T_f; H^{\sigma+\frac{1}{2}}(\Gamma_D)) $ and $
	{\vartheta}_N,\Psi_N$ in $\mathcal C^0(0,T_f;H^\sigma(\Gamma_N)),$ for $\sigma \geq \frac{1}{2}$
	 and  that $\f$ is of class $\mathcal C^2$ on $\mathbb R^2$ with bounded derivatives. 
	 
	 If  the solution $(\u,p,{\vartheta},\Psi)$ of  \eqref{var_darcy}--\eqref{var_concentration} belongs to 
\begin{alignat*}{2}
& H^2(0,T_f;H^s(\Omega)^d)\times L^2(0,T_f; H^{s+1}(\Omega)) \\ 
& \hspace{1cm}\times H^2(0,T_f;H^{s+1}(\Omega))\times H^2(0,T_f;H^{s+1}(\Omega))
\qquad \mbox{for} \; s>\frac{d}{6} 
\end{alignat*}
then there exist a neighborhood of $(\u,p,{\vartheta},\Psi)$, a positive real number $\tau_0$ and a positive integer $N_0$ such that for each $\tau$, $|\tau|\leq \tau_0$ and $N\geq N_0$, problem \eqref{discrete_pbl} admits a unique solution $(\u_{N\tau},p_{N\tau},{\vartheta}_{N\tau},\Psi_{N\tau})$ in this neighborhood. 

\noindent Moreover, there exists a nonnegative constant  $c$ such that this solution satisfies
\begin{alignat}2
& \|\u-\u_{N\tau}\|_{\mathcal C^0(0,T_f;L^3(\Omega)^d)}
\,+\, \|p-p_{N\tau}\|_{L^2(0,T_f;H^1(\Omega))} \label{EstF}\\
& \hspace{+1cm} + \, \|{\vartheta}-{\vartheta}_{N\tau}\|_{\mathcal C^0(0,T_f;H^1(\Omega))}
\,+\,\|\Psi-\Psi_{N\tau}\|_{\mathcal C^0(0,T_f;H^1(\Omega))}\notag\\
&\hspace{1.cm}
\leq c \,(|\tau|+N^{\frac{d}{6}-s})\|\u\|_{H^2(0,T_f;H^s(\Omega)^d)}
 +c ~N^{-s}\|p\|_{L^2(0,T_f;H^{s+1}(\Omega))}\notag\\
 &\hspace{.5cm}+(|\tau|+N^{-s})(\|{\vartheta}\|_{H^2(0,T_f;H^{s+1}(\Omega))}+\|\Psi\|_{H^2(0,T_f;H^{s+1}(\Omega))})\notag\\
  &\hspace{.5cm}+ c\, N^{-\sigma}\big(\|h_1\|_{L^2(0,T_f;H^\sigma(\Omega))}+\|h_2\|_{L^2(0,T_f;H^\sigma(\Omega))}+\|{\vartheta}_\sharp\|_{\mathcal C^0(0,T_f;H^{\sigma}(\Gamma_N))}\notag\\
  &\hspace{.5cm}+\|\Psi_\sharp\|_{\mathcal C^0(0,T_f;H^{\sigma}(\Gamma_N))}+\|{\vartheta}_D\|_{L^2(0,T_f;H^{\sigma+\frac{1}{2}}(\Gamma_D))}+\|\Psi_D\|_{L^2(0,T_f;H^{\sigma+\frac{1}{2}}(\Gamma_D))}\big).\notag
\end{alignat}
\end{theorem}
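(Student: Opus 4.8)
The plan is to place the problem inside the Brezzi--Rappaz--Raviart (BRR) framework that the fixed-point forms $\mathcal F(U)=0$ and $\mathcal F_{N\tau}(U_{N\tau})=0$ were designed for, and to exhibit the exact solution $U=(\u,p,\vartheta,\Psi)$ as a \emph{nonsingular} branch whose discrete counterpart is controlled by the consistency residual $\|\mathcal F_{N\tau}(U)\|_{\mathcal X}$. Once $\mathcal F$ is shown to be $\mathcal C^1$ near $U$ with $D\mathcal F(U)$ an isomorphism of $\mathcal X$, and once the discrete operators approximate the continuous ones, the BRR theorem delivers both the local existence and uniqueness of $U_{N\tau}$ and the estimate $\|U-U_{N\tau}\|_{\mathcal X}\le c\,\|\mathcal F_{N\tau}(U)\|_{\mathcal X}$, from which \eqref{EstF} follows by bounding the residual.

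First I would establish the smoothness and nonsingularity of the continuous map. Writing $D\mathcal F(U)=\mathrm{Id}-\mathcal A\,D\mathcal G(U)$ with $\mathcal A=\mathrm{diag}(\mathcal T,\mathcal L)$, the differentiability of $\mathcal G$ reduces to that of $\f(\vartheta,\Psi)$, which is $\mathcal C^2$ by hypothesis, and of the convection terms $(\u\cdot\nabla)\omega_i$. The delicate point is precisely this bilinear term: through the Sobolev embedding $H^s(\Omega)\hookrightarrow L^3(\Omega)$, valid in dimension $d$ for $s>\tfrac{d}{6}$, together with $\nabla\omega_i\in L^2$, the product lies in $L^2(0,T_f;L^2(\Omega))$ and depends continuously and differentiably on its arguments; this is where the threshold $s>\tfrac{d}{6}$ enters. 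Since $\mathcal T$ and $\mathcal L$ gain regularity (Proposition~\ref{prop_reg} and the parabolic smoothing of \eqref{chaleur-concentration-lin}), $\mathcal A\,D\mathcal G(U)$ is compact, so $D\mathcal F(U)$ is a compact perturbation of the identity and, by the Fredholm alternative, injectivity suffices for invertibility. Injectivity is in turn furnished by the linearized version of the Gronwall argument used in the uniqueness Proposition, which forces the only kernel element to vanish.

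Next I would assemble the approximation properties of the discrete operators. Combining the time-error bounds \eqref{T-Ttau}--\eqref{L-Ltau} with the space-error bounds \eqref{T_tau-T_Ntau}--\eqref{L_tau-L_Ntau} through the triangle inequality gives $\|\mathcal T-\mathcal T_{N\tau}\|$ and $\|\mathcal L-\mathcal L_{N\tau}\|$ bounded by $c\,(|\tau|+N^{-s})$ times the stated norms of $(\u,p)$ and of $(\vartheta,\Psi)$. Together with the convergence property (iii) of Proposition~\ref{prop_L-T_Oper} and the spectral convergence of the block operator $\mathcal A_{N\tau}=\mathrm{diag}(\mathcal T_{N\tau},\mathcal L_{N\tau})$ to $\mathcal A$, this yields $\|D\mathcal F_{N\tau}(U)-D\mathcal F(U)\|_{\mathcal E}\to 0$, so that for $|\tau|\le\tau_0$ and $N\ge N_0$ the discrete linearization $D\mathcal F_{N\tau}(U)$ is itself invertible with uniformly bounded inverse --- the remaining structural hypothesis of the BRR theorem.

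The quantitative core is the consistency estimate $\|\mathcal F_{N\tau}(U)\|_{\mathcal X}=\|U-\mathcal A_{N\tau}\,\mathcal G_{N\tau}(U)\|_{\mathcal X}$. I would split it into the operator-approximation error already controlled above --- which produces the $(|\tau|+N^{-s})$ contributions carrying $\|\u\|_{H^2(0,T_f;H^s(\Omega)^d)}$, $\|p\|$, $\|\vartheta\|$ and $\|\Psi\|$ --- plus the quadrature and interpolation errors arising from replacing $(\cdot,\cdot)$ by $(\cdot,\cdot)_N$ and $(\cdot,\cdot)_N^{\Gamma_N}$ and the data by $\mathcal I_N h_i$, $i_N^{\Gamma_D}\vartheta_D$, and so on. Standard Gauss--Lobatto quadrature estimates on $\mathbb P_N$, valid thanks to \eqref{GL}--\eqref{GL:Maj}, together with the interpolation error bounds (the traces requiring $\sigma\ge\tfrac12$) supply the $N^{-\sigma}$ terms in $h_1,h_2,\vartheta_\sharp,\Psi_\sharp,\vartheta_D,\Psi_D$. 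Feeding this residual bound into the BRR estimate gives exactly \eqref{EstF}. I expect the two genuine obstacles to be the nonsingularity of $D\mathcal F(U)$, because the coupling through the convection term must be controlled uniformly in the product spaces, and the bookkeeping of the consistency error, where each data term generates a distinct quadrature or interpolation contribution that must be tracked to its optimal order.
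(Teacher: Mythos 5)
Your proposal follows the same Brezzi--Rappaz--Raviart skeleton as the paper (recast both problems as $\mathcal F(U)=0$ and $\mathcal F_{N\tau}(U_{N\tau})=0$, check invertibility of the linearization, bound the residual, conclude), but it contains two genuine gaps. The first is your claim that $\mathcal A\,D\mathcal G(U)$ is compact, so that $D\mathcal F(U)$ is a compact perturbation of the identity and injectivity (via Gronwall) yields invertibility. This compactness does not hold as stated: the Darcy solution operator $\mathcal T$ has no spatial smoothing for generic $L^2(0,T_f;L^2(\Omega)^d)$ data --- Proposition \ref{prop_reg} yields extra regularity only under the additional hypothesis $\nabla\cdot\f\in L^2(]0,T_f[\times\Omega)$, and otherwise $\alpha\u=\gamma\f-\nabla p-\partial_t\u$ is exactly as rough in space as the data --- and, worse, the convective part of $D\mathcal G^2(U)$ acts on components of $\mathcal X$ for which no compact embedding is available: the velocity space $H^1(0,T_f;L^2(\Omega)^d)\cap L^2(0,T_f;L^3(\Omega)^d)$ is not compactly embedded in $L^2(0,T_f;L^2(\Omega)^d)$ (Aubin--Lions fails since $L^3(\Omega)\hookrightarrow L^2(\Omega)$ is not compact, and time regularity alone gives nothing), while the map $\zeta\mapsto\u\cdot\nabla\zeta$ sees only $\nabla\zeta\in L^2(0,T_f;L^2(\Omega)^d)$, where the $\mathcal X$-norm provides no compactness either. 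So the Fredholm alternative cannot be invoked. Note that the paper never proves this nonsingularity: it is an explicit assumption, Hypothesis \ref{hyp3}(ii), and the entire proof is conditional on it; an unconditional argument would require something genuinely different from your sketch.

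The second gap is that you never verify the local Lipschitz continuity of $Z\mapsto D\mathcal F_{N\tau}(Z)$ near the anchor point, which is one of the three BRR hypotheses (it controls the quadratic remainder in the Newton-type fixed point); the paper devotes Lemma \ref{lemme2} to exactly this. There is also a structural difference that matters: you anchor all hypotheses and the residual at the exact solution $U$, whereas the paper anchors them at a polynomial approximation $U^\diamond_{N\tau}\in\mathcal X_{N\tau}$ satisfying \eqref{u2}--\eqref{xi3}. This is not cosmetic. The discrete forms are sums of point values at the Gauss--Lobatto grid, and the tools that make them usable --- exactness \eqref{GL} on $\mathbb P_{2N-1}$ and the equivalence \eqref{GL:Maj} on $\mathbb P_N$ --- apply only to polynomial arguments (see the identity $\int_\Omega \f_{N^*}\zeta^1_{N^*}\v_{N\tau}\,d\x=\bigl(\f_{N^*}\zeta^1_{N^*},\v_{N\tau}\bigr)_N$ exploited in the proof of Lemma \ref{lemme1}). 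Evaluating $\mathcal F_{N\tau}$ and $D\mathcal F_{N\tau}$ at the non-polynomial $U$ deprives you of these tools, so your stability and consistency bookkeeping cannot be run as written; repairing it amounts to re-introducing the interpolant, i.e.\ reconstructing the paper's $U^\diamond_{N\tau}$. In that case the conclusion also requires the triangle inequality $\|U-U_{N\tau}\|_{\mathcal X}\le\|U-U^\diamond_{N\tau}\|_{\mathcal X}+\|U^\diamond_{N\tau}-U_{N\tau}\|_{\mathcal X}$, whose first term (bounded via \eqref{u2}--\eqref{xi3}) is absent from your final derivation of \eqref{EstF}.
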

The proof of this theorem is based on the Brezzi-Rappaz-Raviart theorem \cite{BRR}.  So, it suffices to prove that the assumptions of the theorem of Brezzi-Rappaz-Raviart are satisfied.
We will show all of theme in Lemmas \ref{lemme1} -- \ref{lemme3}. 

We first introduce the approximation $U_{N\tau}^\diamond\,=\left(\u_{N\tau}^\diamond,p_{N\tau}^\diamond,{\vartheta}^\diamond_{N\tau},\Psi^\diamond_{N\tau}\right)$ 
of $U$ in the following discrete  space
$$ \mathcal X_{N\tau}=\mathcal C^0(0,T_f;\mathbb X_N)\times 
L^2(0,T_f; \mathbb Y_N^\diamond)\times 
\mathcal C^0(0,T_f;\mathbb Y_N)\times
 \mathcal C^0(0,T_f;\mathbb Y_N),
 $$ 
which satisfies the following estimates: 
\par for each integer numbers $\ell,s,$ $0\leq \ell\leq s$, and for each $t$, $0\leq t\leq T_f$ 
\begin{alignat}{2}
&  \|\u(\cdot,t)-\u^\diamond_{N\tau}(\cdot,t)\|_{H^\ell(\Omega)^d}\leq c N^{\ell-s}\|\u(\cdot,t)\|_{H^s(\Omega)^d},\label{u2}\\
&  \|p(\cdot,t)-p^\diamond_{N\tau}(\cdot,t)\|_{H^{\ell+1}(\Omega)}\leq cN^{\ell-s}\|p(\cdot,t)\|_{H^{s+1}(\Omega)},\label{p2}\\
&  \|{\vartheta}(\cdot,t)-{\vartheta}^\diamond_{N\tau}(\cdot,t)\|_{H^{\ell+1}(\Omega)}\leq c N^{\ell-s}\|{\vartheta}(\cdot,t)\|_{H^{s+1}(\Omega)},\label{theta3}\\
&  \|\Psi(\cdot,t)-\Psi^\diamond_{N\tau}(\cdot,t)\|_{H^{\ell+1}(\Omega)}\leq cN^{\ell-s}\|\Psi(\cdot,t)\|_{H^{s+1}(\Omega)}.\label{xi3}
 \end{alignat}
\begin{hypothesis}\label{hyp3}
We next assume that the solution $(\u,p,{\vartheta},\Psi)$ of  \eqref{var_darcy}--\eqref{var_concentration}
\begin{enumerate}[(i)]
\item belongs  for $ s>\frac{d}{6} $ to 
\begin{alignat*}{2}
& H^2(0,T_f;H^s(\Omega)^d)\times L^2(0,T_f;H^{s+1}(\Omega)) \\ 
& \hspace{+1cm}\times H^2(0,T_f;H^{s+1}(\Omega))\times H^2(0,T_f;H^{s+1}(\Omega)),
\end{alignat*} 
\item   and such that $D\mathcal{F}(U)$ is an isomorphism of  $\mathcal X$.
\end{enumerate}
\end{hypothesis}

\begin{lemma}\label{lemme1}
If $\f$ is of class $\mathcal{C}^2$  on $\mathbb{R}^2$ with bounded derivatives, there exist  nonnegative integer $N_0$, and nonnegative real $\tau_0$ such that,
 for each  $N\geq N_0$ and $\tau\leq \tau_0$, the operator $D\mathcal{F}_{N\tau}(U^\diamond_{N\tau})$  is an isomorphism of $\mathcal{X}_{N\tau}$. 
 \\ 
 Moreover, the norm of its inverse is  bounded independently  of $N$.
 \end{lemma}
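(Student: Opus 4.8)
The plan is to reduce the statement to a uniform \emph{a priori} bound and then to conclude by a Banach perturbation argument based on Hypothesis~\ref{hyp3}(ii). Since $\mathcal{X}_{N\tau}$ is finite dimensional, it suffices to prove that there is a constant $c$, independent of $N$ and $\tau$, such that
$$\|Z_{N\tau}\|_{\mathcal{X}} \le c\,\|D\mathcal{F}_{N\tau}(U^\diamond_{N\tau})\,Z_{N\tau}\|_{\mathcal{X}} \qquad \forall\, Z_{N\tau}\in\mathcal{X}_{N\tau};$$
injectivity then forces bijectivity and the constant $c$ bounds the inverse. Writing $\mathcal{A}=\mathrm{diag}(\mathcal{T},\mathcal{L})$ and $\mathcal{A}_{N\tau}=\mathrm{diag}(\mathcal{T}_{N\tau},\mathcal{L}_{N\tau})$, so that $D\mathcal{F}(U)=\mathrm{Id}-\mathcal{A}\,D\mathcal{G}(U)$ and $D\mathcal{F}_{N\tau}(U^\diamond_{N\tau})=\mathrm{Id}-\mathcal{A}_{N\tau}\,D\mathcal{G}_{N\tau}(U^\diamond_{N\tau})$, I would apply $D\mathcal{F}(U)^{-1}$, whose norm is finite by Hypothesis~\ref{hyp3}(ii), to $Z_{N\tau}$ and use
$$D\mathcal{F}(U)\,Z_{N\tau}=D\mathcal{F}_{N\tau}(U^\diamond_{N\tau})\,Z_{N\tau}+\big(\mathcal{A}_{N\tau}\,D\mathcal{G}_{N\tau}(U^\diamond_{N\tau})-\mathcal{A}\,D\mathcal{G}(U)\big)Z_{N\tau}.$$
Everything then hinges on showing that the perturbation operator
$$R_{N\tau}:=\mathcal{A}_{N\tau}\,D\mathcal{G}_{N\tau}(U^\diamond_{N\tau})-\mathcal{A}\,D\mathcal{G}(U)$$
has norm $\varepsilon(N,\tau)$ in $\mathcal{L}(\mathcal{X}_{N\tau},\mathcal{X})$ tending to $0$; once $\|D\mathcal{F}(U)^{-1}\|\,\varepsilon(N,\tau)\le \tfrac12$, the term $R_{N\tau}Z_{N\tau}$ is absorbed into the left-hand side, giving the claimed bound with $c=2\,\|D\mathcal{F}(U)^{-1}\|$.

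To estimate $\varepsilon(N,\tau)$ I would split $R_{N\tau}$ into three contributions,
$$R_{N\tau}=\mathcal{A}_{N\tau}\big(D\mathcal{G}_{N\tau}-D\mathcal{G}\big)(U^\diamond_{N\tau})+\mathcal{A}_{N\tau}\big(D\mathcal{G}(U^\diamond_{N\tau})-D\mathcal{G}(U)\big)+(\mathcal{A}_{N\tau}-\mathcal{A})\,D\mathcal{G}(U).$$
The third term is controlled by writing $\mathcal{A}-\mathcal{A}_{N\tau}=(\mathcal{A}-\mathcal{A}_\tau)+(\mathcal{A}_\tau-\mathcal{A}_{N\tau})$ and invoking the operator-approximation estimates \eqref{T-Ttau}--\eqref{L-Ltau} of Proposition~\ref{prop_L-T_Oper} together with \eqref{T_tau-T_Ntau}--\eqref{L_tau-L_Ntau}, which yield a bound of order $|\tau|+N^{-s}$. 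The second term uses the Lipschitz continuity of $U\mapsto D\mathcal{G}(U)$ — which is exactly where the hypothesis $\f\in\mathcal{C}^2$ with bounded derivatives enters, so that the linearization of $\f$ varies Lipschitz-continuously — combined with the approximation bounds \eqref{u2}--\eqref{xi3} that control $\|U-U^\diamond_{N\tau}\|_{\mathcal{X}}$ by $cN^{-s}$, and with the uniform stability of $\mathcal{A}_{N\tau}$ from Proposition~\ref{prop_L-T_Oper}(i).

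The main obstacle is the first term, the consistency error between the exact forms and their Gauss--Lobatto counterparts applied to the derivatives of the nonlinear maps. Here $D\mathcal{G}^1$ involves the interpolated Jacobian of $\f$ and $D\mathcal{G}^2$ the linearized convection term $(\v\cdot\nabla)\omega_i+(\u\cdot\nabla)\mu_i$, whose integrands are not polynomials, so the quadrature $(\cdot,\cdot)_N$ is not exact and one must invoke the standard interpolation and quadrature error estimates of \cite{BMR}. The delicate point is the convection contribution: bounding the discrete trilinear form uniformly requires controlling an $L^3$-type norm of the velocity, which is legitimate precisely because of the embedding $H^s(\Omega)\hookrightarrow L^3(\Omega)$, valid for $s>\tfrac{d}{6}$; this is the origin of the restriction on $s$ in Hypothesis~\ref{hyp3}. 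Collecting the three bounds gives $\varepsilon(N,\tau)\le c\,(|\tau|+N^{\frac{d}{6}-s})$, which tends to $0$; choosing $N_0$ and $\tau_0$ so that this quantity is at most $\tfrac{1}{2\|D\mathcal{F}(U)^{-1}\|}$ completes the proof, with an inverse bounded independently of $N$ and $\tau$.
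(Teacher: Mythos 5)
Your proposal is correct and follows essentially the same route as the paper's proof: the identical three-term splitting of $D\mathcal{F}_{N\tau}(U^\diamond_{N\tau})-D\mathcal{F}(U)$ into the operator-approximation error (handled via Proposition~\ref{prop_L-T_Oper} and \eqref{T_tau-T_Ntau}--\eqref{L_tau-L_Ntau}), the Lipschitz term $D\mathcal{G}(U^\diamond_{N\tau})-D\mathcal{G}(U)$ (handled via the $\mathcal{C}^2$ hypothesis on $\f$, the approximation bounds \eqref{u2}--\eqref{xi3} and the stability of the discrete operators), and the Gauss--Lobatto quadrature consistency error, each shown to vanish as $(|\tau|,N)\to(0,\infty)$, followed by perturbation of the isomorphism $D\mathcal{F}(U)$ guaranteed by Hypothesis~\ref{hyp3}(ii). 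The only differences are presentational: you make the absorption argument and the uniform inverse bound $c=2\|D\mathcal{F}(U)^{-1}\|$ explicit (the paper leaves this step implicit), while the paper is more detailed on the quadrature term, using the half-degree projection $N^*=\lfloor (N-1)/2\rfloor$ and the stability of $\mathcal{I}_N$.
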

\begin{proof}
We start by writing  the following expansion
 \begin{eqnarray}
 \notag & D\mathcal{F}_{N\tau}(U^\diamond_{N\tau})=D\mathcal{F}(U) 
 + \left(
 \begin{array}{lll}  
 \mathcal{T}-\mathcal{T}_{N\tau}& \quad0 \\
\quad0& \mathcal L-\mathcal L_{N\tau}\\
\end{array}
\right)  
\left(
\begin{array}{l} 
D\mathcal{G}^1(U)\\
D\mathcal{G}^2(U)\\
\end{array}
\right)
\\&\label{expansion0} \hspace{1.5cm}+\left(
\begin{array}{lll}
\mathcal{T}_{N\tau}& 0\\
0&\mathcal{L}_{N\tau}\\
\end{array}
\right)
\left(
\begin{array}{l}
D\mathcal{G}^{1}(U)-D\mathcal{G}^{1}(U_{N\tau}^\diamond)\\
D\mathcal{G}^{2}(U)-D\mathcal{G}^2(U_{N\tau}^\diamond)\\
\end{array}
\right)
\\&\hspace{2.25cm}+\left(
\begin{array}{lll}
\mathcal{T}_{N\tau}& 0\\
0&\mathcal{L}_{N\tau}\\
\end{array}
\right)
\left(
\begin{array}{l}
D\mathcal{G}^{1}(U_{N\tau}^\diamond)-D\mathcal{G}_{N\tau}^{1}(U_{N\tau}^\diamond)\\
D\mathcal{G}^{2}(U_{N\tau}^\diamond)-D\mathcal{G}_{N\tau}^{2}(U_{N\tau}^\diamond)\\
\end{array}
\right).\notag
\end{eqnarray}
Owing to assumption \ref{hyp3}-(ii), we have to prove that the last three terms in the right hand side of \eqref{expansion0} tend to $0$ when $\left(|\tau|,N\right)$ goes to $\left(0,\infty\right)$.         
              
 \noindent Let $W_{N\tau}=(\w_{N\tau},q_{N\tau}, \zeta_{N\tau}^1,\zeta_{N\tau}^2)$ be an element of unit sphere of $\mathcal X_{N\tau}$. So, we observe that
 \begin{eqnarray*}
\left(\begin{array}{l} D\mathcal{G}^1(U)\cdot W_{N\tau}\\
D\mathcal{G}^2(U)\cdot W_{N\tau}\\
\end{array}\right)=
\left(\begin{array}{l} (\partial_{\vartheta} \f({\vartheta},\Psi)\zeta^1_{N\tau}+\partial_\Psi \f({\vartheta},\Psi)\zeta^2_{N\tau},0)\\
(\w_{N\tau}\nabla w_i-\u\nabla \zeta^i_{N\tau},0,0,0)\\
\end{array}\right).
\end{eqnarray*}  
First, since $D\mathcal G^1(U)$ and $D\mathcal{G}^2(U)$ are bounded and owing to the expansion 
$\mathcal T-\mathcal T_{N\tau}=(\mathcal T-\mathcal T_\tau)+(\mathcal T_{\tau}-\mathcal T_{N\tau})$ and $\mathcal L-\mathcal L_{N\tau}=(\mathcal L-\mathcal L_\tau)+(\mathcal L_{\tau}-\mathcal L_{N\tau})$,  the convergence of the first term is a consequence of \eqref{T-Ttau}, \eqref{L-Ltau}, \eqref{T_tau-T_Ntau} and  \eqref{L_tau-L_Ntau}.  
 \begin{eqnarray*}
\lim_{N\rightarrow0}\lim_{|\tau|\rightarrow0} \left(\begin{array}{lll}  \mathcal{T}-\mathcal{T}_{N\tau}& \quad0\\
\quad0& \mathcal L-\mathcal L_{N\tau}\\
\end{array}\right)  
\left(\begin{array}{l} D\mathcal{G}^1(U)\\
D\mathcal{G}^2(U)\\
\end{array}\right)=0.
\end{eqnarray*}     
 Next, 
we will prove the convergence of three terms:
\begin{alignat*}2
&\left(\partial_{\vartheta} \f({\vartheta},\Psi)-\partial_{\vartheta} \f({\vartheta}_{N\tau}^\diamond,\Psi^\diamond_{N\tau})\right)
\zeta^1_{N\tau}+(\partial_\Psi \f({\vartheta},\Psi)-\partial_\Psi \f({\vartheta}^\diamond_{N\tau},\Psi^\diamond_{N\tau}))\zeta^2_{N\tau},\\
&\w_{N\tau}\nabla({\vartheta}-{\vartheta}^\diamond_{N\tau})-(\u-\u^\diamond_{N\tau})\nabla \zeta^1_{N\tau} \\ 
& \mbox{and } \qquad\w_{N\tau}\nabla(\Psi-\Psi^\diamond_{N\tau})-(\u-\u^\diamond_{N\tau})\nabla \zeta^2_{N\tau}.
\end{alignat*}
Due to \eqref{u2}, \eqref{theta3} and \eqref{xi3} combining with stability of linear discrete operators $\mathcal T_{N\tau}$ and $\mathcal L_{N\tau}$, we claim  
\begin{eqnarray*}
\lim_{N\rightarrow0}\lim_{|\tau|\rightarrow0} \left(
\begin{array}{lll}
\mathcal{T}_{N\tau}& 0\\
0&\mathcal{L}_{N\tau}\\
\end{array}
\right)
\left(
\begin{array}{l}
D\mathcal{G}^{1}(U)-D\mathcal{G}^{1}(U_{N\tau}^\diamond)\\
D\mathcal{G}^{2}(U)-D\mathcal{G}^2(U_{N\tau}^\diamond)\\
\end{array}
\right)
=0
\end{eqnarray*}
Finally, Owing to the definition of $D\mathcal G^1, D\mathcal G^2,D\mathcal G^1_{N\tau}$ and $ D\mathcal G^2_{N\tau},$ we check that the three following terms converge to zero, for all $\v_{N\tau}$ in $\mathbb P_N(\Omega)^d$ and $\eta _{N\tau} $ in $\mathbb P_N(\Omega)$
\begin{alignat*}2
&\int_\Omega\partial_{\vartheta} \f({\vartheta}^\diamond_{N\tau},\Psi^\diamond_{N\tau})\zeta_{N\tau}^1\cdot\v_{N\tau}\;d\x-\Big(\partial_{\vartheta} \f({\vartheta}^\diamond_{N\tau},\Psi^\diamond_{N\tau})\zeta^1_{N\tau},\v_{N\tau} \Big)_N,\\
&\int_\Omega\partial_\Psi \f({\vartheta}^\diamond_{N\tau},\Psi^\diamond_{N\tau})\zeta^2_{N\tau})\cdot\v_{N\tau}\;d\x-\Big(\partial_\Psi \f({\vartheta}^\diamond_{N\tau},\Psi^\diamond_{N\tau}))\zeta^2_{N\tau},\v_{N\tau} \Big)_N,\\
&\int_\Omega (\w_{N\tau}\nabla({\vartheta}-{\vartheta}^\diamond_{N\tau})-(\u-\u^\diamond_{N\tau})\nabla \zeta^1_{N\tau})\eta _{N\tau}d\x\\
& \hspace{+2cm}-\Big(  \w_{N\tau}\nabla({\vartheta}-{\vartheta}^\diamond_{N\tau})-(\u-\u^\diamond_{N\tau})\nabla \zeta^1_{N\tau},\eta _{N\tau}\Big)_N\\
&\int_\Omega (\w_{N\tau}\nabla(\Psi-\Psi^\diamond_{N\tau})-(\u-\u^\diamond_{N\tau})\nabla \zeta^2_{N\tau})\eta _{N\tau}d\x
\\ 
& \hspace{+2cm}- \Big(\w_{N\tau}\nabla(\Psi-\Psi^\diamond_{N\tau})-(\u-\u^\diamond_{N\tau})\nabla \zeta^2_{N\tau},\eta _{N\tau} \Big)_N.
\end{alignat*}
As we use the same arguments  to evaluate all these terms,  we  only consider  the first one.
To do so, we choose $N^*$ equal to the integer part of $\frac{N-1}{2} $ and we introduce the approximations $\f_{N^*} $  of $\partial_{\vartheta} \f({\vartheta}^\diamond_{N\tau},\Psi^\diamond_{N\tau})$  
in $\mathbb P_{N^*}(\Omega)^d$ and $\zeta^1_{N^*}$ of $\zeta^1_{N\tau}$ 
in $\mathbb P_{N^*}(\Omega)$. We point out  the identity\;
$$
\displaystyle 
\int_{\Omega} \f_{N^*}\zeta^1_{N^*}\v_{N\tau}\;d\x=\Big( \f_{N^*}\zeta^1_{N^*},\v_{N\tau}\Big)_N.
$$
Then
\begin{alignat*}2
&\int_\Omega\partial_{\vartheta} \f({\vartheta}^\diamond_{N\tau},\Psi^\diamond_{N\tau})\zeta_{N\tau}^1\cdot \v_{N\tau}d\x-\Big(\partial_{\vartheta} \f({\vartheta}^\diamond_{N\tau},\Psi^\diamond_{N\tau})\zeta^1_{N\tau},\v_{N\tau} \Big)_N\\
&\hspace{+1cm} \leq 
\; (\|\partial_{\vartheta} \f({\vartheta}^\diamond_{N\tau},\Psi^\diamond_{N\tau})\zeta_{N\tau}^1-\f_{N^*}\zeta^1_{N^*} \| \\ 
&\hspace{+2cm} + \,3^d\|\mathcal I_N(\partial_{\vartheta} \f({\vartheta}^\diamond_{N\tau},\Psi^\diamond_{N\tau})\zeta_{N\tau}^1-\f_{N^*}\zeta^1_{N^*} )\| ) \|\v_{N\tau}\|.
\end{alignat*}
 Triangular inequality and stability property of $\mathcal I_N$ 
(ses \cite[Rem. 13.5]{BerMad_HandBook}) give us
\begin{alignat*}2
&\hspace{-1.7cm}\int_\Omega\partial_{\vartheta} \f({\vartheta}^\diamond_{N\tau},\Psi^\diamond_{N\tau})\zeta_{N\tau}^1\cdot \v_{N\tau}d\x-\Big(\partial_{\vartheta} \f({\vartheta}^\diamond_{N\tau},\Psi^\diamond_{N\tau})\zeta^1_{N\tau},\v_{N\tau} \Big)_N\\
&\hspace{+1cm} \leq 
\; \big(\|\partial_{\vartheta} \f({\vartheta}^\diamond_{N\tau},\Psi^\diamond_{N\tau})-\f_{N^*}\|\|\zeta^1_{N\tau} \|_{L^\infty(\Omega)} \\ 
&\hspace{+2cm} + \, \|\zeta_{N\tau}^1-\zeta^1_{N^*} \| \|\f_{N^*}\|_{L^\infty(\Omega)^d}\big) \|\v_{N\tau}\|.
\end{alignat*}
Let us introduce the orthogonal projection operator from $L^2(\Omega)$ (or $L^2(\Omega)^d$) on $\mathbb P_{N^*}(\Omega)$ (or $\mathbb P_{N^*}(\Omega)^2$ ) (see for instance see \cite[Ch.III]{BMR}). Using the Lipschitz property of $\partial_{\vartheta} \f$ and  taking 
$$ \f_{N^*} = \Pi_{N^*}\partial_{\vartheta} \f({\vartheta},\Psi)\quad \mbox{and }\quad 
 \zeta^1_{N^*} = \Pi_{N^*}\zeta^1_{N\tau}
$$ 
%
Then, thanks to  \cite[Chap. III, Thm. 2.4]{BMR} combining with stability of $\mathcal T_{N\tau}$ and $\mathcal L_{N\tau}$, we deduce that
\begin{eqnarray*}
\lim_{N\rightarrow0}\lim_{|\tau|\rightarrow0} \left(
\begin{array}{lll}
\mathcal{T}_{N\tau}& 0\\
0&\mathcal{L}_{N\tau}\\
\end{array}
\right)
\left(
\begin{array}{l}
D\mathcal{G}^{1}(U_{N\tau}^\diamond)-D\mathcal{G}_{N\tau}^{1}(U_{N\tau}^\diamond)\\
D\mathcal{G}^{2}(U_{N\tau}^\diamond)-D\mathcal{G}_{N\tau}^{2}(U_{N\tau}^\diamond)\\
\end{array}
\right)=0.
\end{eqnarray*}
This concludes the proof.
 	\end{proof}
  \begin{lemma}\label{lemme2}
 Under the same assumptions in the previous lemma, there exist a neighborhood of $U^\diamond_{N\tau}$ in $\mathcal X_{N\tau}$ and a positive constant $c$  such that the operator $D\mathcal F_{N\tau}$ satisfies  Lipschitz property, for any $Z_N$ in this neighborhood,
 \begin{equation}
  \|D\mathcal F_{N\tau}(U^\diamond_{N\tau})-D\mathcal F_{N\tau}(Z_N)\|_{\mathcal{E}}\leq c\|U^\diamond_{N\tau}-Z_N\|_{\mathcal{X}},
 \end{equation}
 \end{lemma}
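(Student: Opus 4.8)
The plan is to exploit that $\mathcal{F}_{N\tau}$ differs from the identity only through the composition of the bounded linear operators $\mathcal{T}_{N\tau},\mathcal{L}_{N\tau}$ with the at most quadratic nonlinearities $\mathcal{G}^1_{N\tau},\mathcal{G}^2_{N\tau}$. First I would note that, for every $W_{N\tau}$ in $\mathcal{X}_{N\tau}$, the identity parts cancel exactly, leaving
\begin{equation*}
\big(D\mathcal{F}_{N\tau}(U^\diamond_{N\tau})-D\mathcal{F}_{N\tau}(Z_N)\big)W_{N\tau}
=-\begin{pmatrix}\mathcal{T}_{N\tau}&0\\0&\mathcal{L}_{N\tau}\end{pmatrix}
\begin{pmatrix}\big(D\mathcal{G}^1_{N\tau}(U^\diamond_{N\tau})-D\mathcal{G}^1_{N\tau}(Z_N)\big)W_{N\tau}\\\big(D\mathcal{G}^2_{N\tau}(U^\diamond_{N\tau})-D\mathcal{G}^2_{N\tau}(Z_N)\big)W_{N\tau}\end{pmatrix}.
\end{equation*}
Since $\mathcal{T}_{N\tau}$ and $\mathcal{L}_{N\tau}$ are bounded uniformly in $N$ and $\tau$ (the discrete stability already invoked in the proof of Lemma \ref{lemme1}, in the spirit of Proposition \ref{prop-3-2}), it suffices to bound $\|D\mathcal{G}^j_{N\tau}(U^\diamond_{N\tau})-D\mathcal{G}^j_{N\tau}(Z_N)\|$ for $j=1,2$ by $c\,\|U^\diamond_{N\tau}-Z_N\|_{\mathcal{X}}$.

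For the first component, writing $W_{N\tau}=(\w_{N\tau},q_{N\tau},\zeta^1_{N\tau},\zeta^2_{N\tau})$ and $Z_N=(\w_Z,q_Z,\zeta^1_Z,\zeta^2_Z)$, the operator $D\mathcal{G}^1_{N\tau}$ acts through $\partial_{\vartheta}\f$ and $\partial_\Psi\f$ evaluated at the heat and mass unknowns, so the difference tested against $\v_N\in\mathbb{X}_N$ is a discrete product of terms such as $\big(\partial_{\vartheta}\f({\vartheta}^\diamond_{N\tau},\Psi^\diamond_{N\tau})-\partial_{\vartheta}\f(\zeta^1_Z,\zeta^2_Z)\big)\zeta^1_{N\tau}$. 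Because $\f$ is $\mathcal{C}^2$ with bounded second derivatives, the mean value theorem gives the pointwise bound $|\partial_{\vartheta}\f(a)-\partial_{\vartheta}\f(b)|\le c\,|a-b|$; combined with the quadrature equivalence \eqref{GL:Maj} and a H\"older splitting this controls the term by $c\,\big(\|{\vartheta}^\diamond_{N\tau}-\zeta^1_Z\|+\|\Psi^\diamond_{N\tau}-\zeta^2_Z\|\big)$, which is part of $\|U^\diamond_{N\tau}-Z_N\|_{\mathcal{X}}$.

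The second component carries the real work, but it simplifies conceptually: the convection term $(\u\cdot\nabla)\omega_i$ is bilinear in the pair (velocity, scalar), hence $D\mathcal{G}^2_{N\tau}$ is affine in $U_{N\tau}$ and $D\mathcal{G}^2_{N\tau}(U^\diamond_{N\tau})-D\mathcal{G}^2_{N\tau}(Z_N)$ depends exactly linearly on $U^\diamond_{N\tau}-Z_N$, so the Lipschitz constant is nothing but the norm of a fixed bounded bilinear form. The main obstacle is to make this bound uniform in $N$ and $\tau$: tested against $\eta_N\in\mathbb{Y}^\star_N$, a representative term is $\big((\u^\diamond_{N\tau}-\w_Z)\cdot\nabla\zeta^i_{N\tau},\eta_N\big)_N$, which I would estimate by the H\"older splitting $L^3\times L^2\times L^6$ together with the embedding $H^1(\Omega)\hookrightarrow L^6(\Omega)$, valid for $d\le 3$; this is exactly why the velocity is measured in the $L^3$ norm inside $\mathcal{X}$. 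Passing from the integral to the discrete product $(\cdot,\cdot)_N$ costs only the harmless factor from \eqref{GL:Maj} and the $L^2$-stability of $\mathcal{I}_N$. Collecting the two components and multiplying by the uniform bounds on $\mathcal{T}_{N\tau},\mathcal{L}_{N\tau}$ then yields the asserted inequality with a constant $c$ independent of $N$ and $\tau$ on a fixed neighborhood of $U^\diamond_{N\tau}$.
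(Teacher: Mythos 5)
Your proof is correct and follows essentially the same route as the paper: the same block decomposition reducing the Lipschitz bound to uniform stability of $\mathcal{T}_{N\tau},\mathcal{L}_{N\tau}$ plus Lipschitz estimates on $D\mathcal{G}^{1}_{N\tau}$ and $D\mathcal{G}^{2}_{N\tau}$ (via boundedness of $D^2\f$ and bilinearity of the convection term), details the paper does not spell out but instead delegates to \cite[Lemma 4.5]{sarra2}. Note that your cancellation of the identity parts gives the correct form of the decomposition; the paper's displayed formula retains a spurious $(U^\diamond_{N}-Z_N)$ term, apparently a typo conflating $\mathcal{F}_{N\tau}$ with $D\mathcal{F}_{N\tau}$.
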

 	\begin{proof} When setting  $Z_N=(\z_N,q_N,\sigma_N,\zeta_N)$  and writing 
\begin{eqnarray*}
D\mathcal{F}_N(U^\diamond_N)-D\mathcal{F}_N(Z_N)= (U^\diamond_N-Z_N)-
\left( \begin{array}{lll}
\mathcal{T}_N& 0\\
0&\mathcal{L}_N\\
\end{array}
\right)
\left(
\begin{array}{l}
D\mathcal{G}_{N}^{1}(U_{N\tau}^\diamond)-D\mathcal{G}_{N}^{1}(Z_N)\\
D\mathcal{G}_{N}^{2}(U_{N\tau}^\diamond)-D\mathcal{G}_{N}^{2}(Z_N)\\
\end{array}
\right).
\end{eqnarray*}
We can use similar arguments in \cite[Lemma 4.5]{sarra2} to  obtain the result.
 	\end{proof} 
 \begin{lemma}\label{lemme3}
 For any $h_1,h_2$ in $ L^2(0,T_f;H^\sigma(\Omega))$, ${\vartheta}_\sharp,\Psi_\sharp$ in $L^2(0,T_f; H^\sigma(\Gamma_N))$ and ${\vartheta}_D,\Psi_D$ in $L^2(0,T_f;H^{\sigma+\frac{1}{2}}(\Gamma_D))$. 
  If $\f$ is of class $\mathcal{C}^2$ with bounded derivatives and if the Assumption \ref{hyp3} holds,
  then the following estimate is satisfied
\begin{alignat*}{2}
 & \|\mathcal{F}_{N\tau}(U^\diamond_{N\tau})\|_{\mathcal X}
 \leq c\, 
 \Big[
 \left(|\tau|+N^{\frac{d}{6}-s}\right)\, \|\u\|_{H^2(0,T_f;H^s(\Omega)^d)}\\
&\qquad+\, N^{-s}\|p\|_{L^2(0,T_f;H^{s+1}(\Omega))} +\,\left(|\tau|+N^{-s}\right)\, \sum_{i=1}^2\|\omega_i\|_{H^2(0,T_f;H^{s+1}(\Omega))}\notag \\ 
 & \qquad
 +N^{-\sigma}\sum_{i=1}^2 \big(\|h_i\|_{\mathcal{C}^0(0,T_f;H^{\sigma}(\Omega))} +\|\omega_{iD}\|_{L^2(0,T_f;H^{\sigma+\frac{1}{2}}(\Gamma_D))} 
 \\ & \hspace{+6cm}+\|\omega_{i\sharp}\|_{L^2(0,T_f;H^{\sigma}(\Gamma_N))}\big)
 \Big]
\notag
\end{alignat*}
 \end{lemma}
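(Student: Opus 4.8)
The plan is to use the two fixed-point reformulations and compare them term by term. Writing $\Theta=\mathrm{diag}(\mathcal{T},\mathcal{L})$ and $\Theta_{N\tau}=\mathrm{diag}(\mathcal{T}_{N\tau},\mathcal{L}_{N\tau})$ for the block operators, and $\mathcal{G}$, $\mathcal{G}_{N\tau}$ for the corresponding pairs of right-hand sides, the continuous equation $\mathcal{F}(U)=0$ reads $U=\Theta\,\mathcal{G}(U)$, so I would split
\begin{equation*}
\mathcal{F}_{N\tau}(U^\diamond_{N\tau})=(U^\diamond_{N\tau}-U)+\bigl[\Theta\,\mathcal{G}(U)-\Theta_{N\tau}\,\mathcal{G}_{N\tau}(U^\diamond_{N\tau})\bigr].
\end{equation*}
The first summand is a pure best-approximation term and is controlled componentwise by \eqref{u2}--\eqref{xi3}: with $\ell=0$ the pressure and the two fields $\vartheta,\Psi$ contribute the clean rate $N^{-s}$, whereas the velocity, which in $\mathcal{X}$ is measured in the $L^3$-norm, forces me to take $\ell$ near $d/6$ in \eqref{u2} and to invoke the embedding $H^{d/6}(\Omega)\hookrightarrow L^3(\Omega)$, producing the factor $N^{d/6-s}\|\u\|_{H^2(0,T_f;H^s(\Omega)^d)}$.

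For the bracketed term I would insert $\Theta_{N\tau}\,\mathcal{G}(U)$ and apply the triangle inequality to get an operator-consistency contribution $(\Theta-\Theta_{N\tau})\,\mathcal{G}(U)$ and a stability contribution $\Theta_{N\tau}\bigl(\mathcal{G}(U)-\mathcal{G}_{N\tau}(U^\diamond_{N\tau})\bigr)$. For the first, I would telescope $\mathcal{T}-\mathcal{T}_{N\tau}=(\mathcal{T}-\mathcal{T}_{\tau})+(\mathcal{T}_{\tau}-\mathcal{T}_{N\tau})$ and likewise for $\mathcal{L}$; since $\mathcal{G}(U)$ is bounded in the relevant norms, the time-consistency bounds \eqref{T-Ttau}--\eqref{L-Ltau} and the space-consistency bounds \eqref{T_tau-T_Ntau}--\eqref{L_tau-L_Ntau} combine to a contribution of order $(|\tau|+N^{-s})$ times $\|\u\|_{H^2(0,T_f;H^s)}$, $\|p\|_{L^2(0,T_f;H^{s+1})}$ and $\sum_i\|\omega_i\|_{H^2(0,T_f;H^{s+1})}$, which is exactly the first three groups on the right-hand side.

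For the stability contribution I would use the stability of $\mathcal{T}_{N\tau}$ and $\mathcal{L}_{N\tau}$ (proved as in Proposition \ref{prop-3-2}) and split $\mathcal{G}(U)-\mathcal{G}_{N\tau}(U^\diamond_{N\tau})$ once more. The difference $\mathcal{G}(U)-\mathcal{G}(U^\diamond_{N\tau})$ is handled by the Lipschitz continuity of $\f$ and the bilinearity of the convection term, reduced again to \eqref{u2}--\eqref{xi3}; here the piece $\bigl((\u-\u^\diamond_{N\tau})\cdot\nabla\bigr)\omega_i$, paired with a test function, once more calls for the $L^3$-estimate of the velocity error and reproduces the $N^{d/6-s}$ rate. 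The remaining difference $\mathcal{G}(U^\diamond_{N\tau})-\mathcal{G}_{N\tau}(U^\diamond_{N\tau})$ collects the two genuinely discrete effects: the replacement of the exact integrals by the Gauss--Lobatto products $(\cdot,\cdot)_N$ and $(\cdot,\cdot)_N^{\Gamma_N}$, and the replacement of the data $h_i$, $\omega_{iD}$, $\omega_{i\sharp}$ by their interpolants. The quadrature error on the polynomial parts is treated as in the last step of Lemma \ref{lemme1}, by inserting $\mathbb{P}_{N^\ast}$-projections with $N^\ast=\lfloor(N-1)/2\rfloor$, using the exactness \eqref{GL} and the stability \eqref{GL:Maj} of $(\cdot,\cdot)_N$ together with the stability of $\mathcal{I}_N$; the data-interpolation errors, estimated in $H^\sigma$, furnish precisely the last group of $N^{-\sigma}$ terms.

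The main obstacle is the consistent treatment of the convection term $(\u\cdot\nabla)\omega_i$. Because the velocity is only available in $L^3$ and not in $L^\infty$, each velocity factor must pass through the Sobolev embedding $H^{d/6}\hookrightarrow L^3$ (or, on the discrete side, the inverse inequality $\|\v_N\|_{L^3}\le cN^{d/6}\|\v_N\|_{L^2}$), which is exactly what generates the anomalous exponent $d/6-s$ and imposes the restriction $s>\frac{d}{6}$ needed for convergence. A secondary technical point is the quadrature error attached to the non-polynomial nonlinearity $\f(\vartheta^\diamond_{N\tau},\Psi^\diamond_{N\tau})$, which is not integrated exactly by the Gauss--Lobatto rule and must be controlled degree by degree, as in Lemma \ref{lemme1}.
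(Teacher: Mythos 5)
Your proof is correct and follows essentially the same route as the paper: after expanding your two-step splitting, you arrive at exactly the paper's four-term decomposition $-(U-U^\diamond_{N\tau})$, $(\Theta-\Theta_{N\tau})\mathcal{G}(U)$, $\Theta_{N\tau}\bigl(\mathcal{G}(U)-\mathcal{G}(U^\diamond_{N\tau})\bigr)$ and $\Theta_{N\tau}\bigl(\mathcal{G}(U^\diamond_{N\tau})-\mathcal{G}_{N\tau}(U^\diamond_{N\tau})\bigr)$, each handled with the same tools (approximation estimates \eqref{u2}--\eqref{xi3}, the telescoping through $\mathcal{T}_\tau,\mathcal{L}_\tau$ with \eqref{T-Ttau}--\eqref{L-Ltau} and \eqref{T_tau-T_Ntau}--\eqref{L_tau-L_Ntau}, discrete stability, and the quadrature argument from Lemma \ref{lemme1}). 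In fact your write-up is more explicit than the paper's on the origin of the $N^{\frac{d}{6}-s}$ rate and on the $N^{-\sigma}$ data-interpolation terms, which the paper dismisses as ``standard arguments of numerical integration error.''
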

\begin{proof}
Since $\mathcal F(U)=0$, introducing  $\mathcal{\tilde G}_{N\tau}^2=(\mathcal G^2_{N\tau},\omega_{iD},\tilde\omega_{i\sharp},\omega_{i0})$, we write 
\begin{eqnarray*}
 \mathcal{F}_{N\tau}(U_{N\tau}^\diamond)  &=-(U-U_{N\tau       }^\diamond) +
\left(\begin{array}{lll}
\mathcal{T}-\mathcal T_{N\tau}&\quad 0\\
\quad 0 & \mathcal{L}-\mathcal L_{N\tau}\\
\end{array} \right)\left(\begin{array}{l}
\mathcal{G}^1(U)\\
\mathcal G^2(U)\\
\end{array} \right) 
\\
& \hspace{2.cm}
+ \left(\begin{array}{lll}
\mathcal T_{N\tau}& 0\\
0 & \mathcal L_{N\tau}\\
\end{array} \right)\left(\begin{array}{l}
\mathcal{G}^1(U)-\mathcal{G}^1(U_{N\tau}^\diamond)\\
\mathcal G^2(U)-\mathcal{G}^2(U_{N\tau}^\diamond)\\
\end{array} \right) 
\\ & \hspace{2.5cm} +
\left(\begin{array}{lll}
\mathcal T_{N\tau}& 0 \\
0 & \mathcal L_{N\tau}\\
\end{array} \right)\left(\begin{array}{l}
\mathcal{G}^1(U_{N\tau}^\diamond)-\mathcal{G}_{N\tau}^{1}(U_{N\tau}^\diamond)\\
\mathcal G^2(U_{N\tau}^\diamond)-\mathcal{\tilde G}_{N\tau}^{2}(U_{N\tau}^\diamond)\\
\end{array} \right) 
\\
\end{eqnarray*}
Thanks to \eqref{u2},\eqref{p2},\eqref{theta3} and \eqref{xi3}, we bound the  first term.
Concerning the second term,  we have
\begin{alignat*}2
 \|(\mathcal{T}-\mathcal T_{N\tau})(\mathcal G^1(U))\|&_{\mathcal C^0(0,T_f;L^2(\Omega)^d)\times L^2(0,T_f;H^1(\Omega))} \\ 
& \leq \, \|(\mathcal{T}-\mathcal T_{\tau})(\mathcal G^1(U))\|_{\mathcal C^0(0,T_f;L^2(\Omega)^d)\times L^2(0,T_f;H^1(\Omega))}\\
&\qquad +\, \|(\mathcal{T}_\tau-\mathcal T_{N\tau})(\mathcal G^1(U))\|_{\mathcal C^0(0,T_f;L^2(\Omega)^d)\times L^2(0,T_f;H^1(\Omega))}\\
&\leq c |\tau | \|\u\|_{H^2(0,T_f;L^2(\Omega)^d)}+
c N^{-s}\|\u_\tau\|_{H^1(0,T_f;H^s(\Omega)^d)} \\ 
& \qquad \qquad + N^{-s}\|p_\tau\|_{L^2(0,T_f; H^{s+1}(\Omega))},\\
&\leq c (|\tau|+N^{-s})\|\u\|_{H^2(0,T_f;H^s(\Omega)^d)}.
\end{alignat*}
Similarly,
\begin{alignat*}2
&\|(\mathcal{L}-\mathcal L_{N\tau})(\mathcal G^2(U))\|_{\mathcal C^0(0,T_f;H^1(\Omega))} \\
& \leq \, \|(\mathcal{L}-\mathcal L_{\tau})(\mathcal G^2(U))\|_{\mathcal C^0(0,T_f;H^1(\Omega))}+\|(\mathcal{L}_\tau-\mathcal L_{N\tau})(\mathcal G^2(U))\|_{\mathcal C^0(0,T_f;H^1(\Omega))}\\
&   \leq c  \left(|\tau |+ N^{-s}\right) \left(\|{\vartheta}\|_{H^2(0,T_f;H^{s+1}(\Omega))}+\|\Psi\|_{H^2(0,T_f;H^{s+1}(\Omega))}\right).
\end{alignat*}
Evaluating  the third term follows from the stability property of operators $\mathcal T_{N\tau}$ and $\mathcal L_{N\tau}$   combining with  \eqref{u2}, \eqref{theta3} and \eqref{xi3}:
\begin{eqnarray*}
&\left\|\left(\begin{array}{lll}
\mathcal T_{N\tau}& 0\\
0 & \mathcal L_{N\tau}\\
\end{array}
 \right)\left(\begin{array}{l}
\mathcal{G}^1(U)-\mathcal{G}^1(U_{N\tau}^\diamond)\\
\mathcal G^2(U)-\mathcal{G}^2(U_{N\tau}^\diamond)\\
\end{array} \right)\right\|_{\mathcal X} \\
&
\leq c \big(N^{\frac{d}{6}-s}\|\u\|_{\mathcal C^0(0,T_f;H^s(\Omega)^d)} \\
& \hspace{+4cm}+ N^{-s} (\|{\vartheta}\|_{\mathcal C^0(0,T_f;H^{s+1}(\Omega))}+\|\Psi\|_{\mathcal C^0(0,T_f;H^{s+1}(\Omega))}) \big).
\end{eqnarray*}
Finally, proving the estimate for the fourth term is
obtained from stability of $\mathcal T_{N\tau}$ and $\mathcal L_{N\tau}$ and
by using the standard arguments of numerical integration error.
	\end{proof}
	\begin{remark}
\begin{enumerate}[(1)]
	\item  All assumptions of the Brezzi-Rappaz-Raviart theorem  are obtained in Lemmas \ref{lemme1}-- \ref{lemme3}.
		\item  the error behaves  $|\tau|+N^{\frac{d}{6}-s}$   optimal in time and nearly optimal in space.
\end{enumerate}
\end{remark}
\section{Numerical results}
\par We report in this section several numerical tests,
the aim being to evaluate the performance of the spectral discretization in two and
three space dimensions. All the computations have been performed on the code
FreeFEM3D--spectral version, see  \cite{delpino1} and \cite{driss2}.  
However, before we do it, we wish to explain how to solve the coupled problem.
Indeed, it is clear that when using   the implicit Euler's scheme in time and spectral method in space, 
the obtained discrete problem is still  coupled and  nonlinear. To solve it numerically, 
we propose the following linear iterative process: 
\par At each time step  $t_{m}$: knowing the solution
$\left( \u^{m-1},p^{m-1},{\vartheta}^{m-1}, \Psi^{m-1} \right) $ at time $t_{m-1}$. For a given tolerance $\varepsilon >0$, compute  :
\begin{enumerate} 
\item {\bf Step 1:\, Initialization:} 
$$\left( \u^{m}_{0},p^{m}_{0},{\vartheta}^{m}_{0}, \Psi^{m}_{0} \right)  = 
\left( \u^{m-1},p^{m-1},{\vartheta}^{m-1}, \Psi^{m-1} \right). $$
\item {\bf Step 2:\, Darcy Eq.:} \\ 
 For a nonnegative integer $k>0$, knowing $\left({\vartheta}^{m}_{k}, \Psi^{m}_{k}\right)$,
  \\  we compute  $(\u^{m}_{k+1},p^{m}_{k+1})$ solution of 
\begin{alignat*}2
& \frac{\u^{m}_{k+1}-\u^{m-1}}{\tau_m}+\alpha \u^{m}_{k+1}+\nabla p^{m}_{k+1}= \f({\vartheta}^{m}_{k},\Psi^{m}_{k}),\\
&\nabla\cdot \u^{m}_{k+1}=0.
\end{alignat*}
\item {\bf Step 3:\, Heat Eq.:}\quad  Find  ${\vartheta}^{m}_{k+1}$ solution of 
\begin{alignat*}2
& \frac{{\vartheta}^{m}_{k+1}-{\vartheta}^{m-1}}{\tau_m}+ (\u^{m}_{k+1}\cdot\nabla) {\vartheta}^{m}_{k+1}-\nabla\cdot(\lambda_{11}\nabla {\vartheta}^{m}_{k+1})-\nabla\cdot(\lambda_{12}\nabla \Psi^{m}_{k})= h_1^{m},
\end{alignat*}
\item {\bf Step 4:\, Concentration Eq.:}\quad Finally, we compute  $\Psi^{m}_{k+1}$ solution of 
\begin{alignat*}2
&\frac{\Psi^{m}_{k+1}-\Psi^{m-1}}{\tau_m}+ (\u^{m}_{k+1}\cdot\nabla) \Psi^{m}_{k+1}-\nabla\cdot(\lambda_{22}\nabla \Psi^{m}_{k+1})-\nabla\cdot(\lambda_{21}\nabla {\vartheta}^{m}_{k+1})= h_2^{m}.
\end{alignat*}
\item {\bf Step 5:\, Goto Step 2 until:}\quad  
\begin{alignat*}{2}
&  \| \u^{m}_{k+1} - \u^{m}_{k}\|^2 
+ \|p^{m}_{k+1}  -p^{m}_{k}  \|_{H^1(\Omega)}^2 \\
&\hspace{+1cm} + 
\|T^{m}_{k+1}  -T^{m}_{k}  \|_{H^1(\Omega)}^2
+ \|C^{m}_{k+1}  -C^{m}_{k+1}  \|_{H^1(\Omega)}^2 \,\le \varepsilon.
\end{alignat*}
\end{enumerate}
Note that, the obtained linear systems are solved using a preconditioned GMRES, see Saad \cite{Saa2003}.
\subsection{Time accuracy}
To confirm our theoretical results, we are interested to calculate the error due to the  time discretization. 
In order to do so, we construct the three-dimensional problem where the exact solution is given in the unite cube $] 0.1 [^3$ by
\begin{alignat}2\label{solution-test-4}
\begin{array}{rcl}
 u_1&=& \cos(t)y- \sin(\pi+t)z^2,\\
 u_2 &=& \sin(t)(x-1)+\cos(\pi t),\\
u_3&=&-2tx,
\end{array}
\hspace{1cm}
\begin{array}{rcl}
 p &=&  \sin(t) x+\cos(t)(y+z^2),\\
 T &=& \cos(t)(x^2+2 y^2 - z),\\
C &=& \sin(t)(-x + y^3). 
\end{array}
\end{alignat}
We choose the coefficients $\alpha$ and $\lambda_{ij}$, $i,j = 1, 2$ as follow
\begin{alignat*}2
&\alpha = T^2+ C^2+ 2,\\
&\lambda_{11}= T+ C + 10, \;\qquad \lambda_{12}=0,\\
&\lambda_{22}= T^2 + C^2 + 2, \qquad \lambda_{21}=T+C.
\end{alignat*}
Hence, the suitable forcing functions $\f, h_1,h_2$ and  boundary conditions  are obtained using  these exact solutions in  
our system.
We recall that all  parameters $\alpha$ and  $(\lambda_{ij})_{1\le i,j \le2}$ are replaced by their Lagrange interpolates.
\par We perform  several simulations by dividing successively the time step $ \delta t$ (starting by $\delta t =0.1$) 
by $ 2$   and  taking  a fixed polynomial degree $N = 5$. 
Note that, at each time,  the exact solution  is polynomial function with degree less than $3$. 
Consequently, the  space error is exactly equal to $0$ and then, only the temporal error will be observed. 
\begin{alignat*}{2}
\mathcal E_{\delta t}  
=  \left( \| \u - \u_N \|^2 + \|p -p_N \|_{H^1(\Omega)}^2  + 
\|T -T_N \|_{H^1(\Omega)}^2+ \|C -C_N \|_{H^1(\Omega)}^2\right)^{\frac{1}{2}}.
\end{alignat*}
Finally, we calculate 
$  \mathcal O_{\frac{\delta t}{2}} = \displaystyle \frac{ log( \frac{\mathcal E_{\delta t}} {\mathcal E_{\frac{\delta t}{2}}})}{log(2)}$ 
which is the desired convergence rate.
\par \par In Table \ref{table4}, we plot separately  the $L^2-$error of the velocity, the $H^1-$error of the temperature, concentration and pressure  and the total error $\mathcal E_{\delta t}$
between the numerical solution and the exact solution at final time $T_f = 1$.
As we can see,  the experimental convergence  rate is close to $ 1 $, which is 
 in concordance with a priori error estimate obtained above, when the backward Euler time differentiation is used. 
\begin{table}[h] \caption{Time accuracy: Convergence rate  when $N=5$}\label{table4}

  \begin{center}
 \begin{tabular}[http]{lclclclclcl}
  \hline   &&& & & &  \\  
$\delta t $    & $ \| \u -\u_{N \tau}\|_{L^2}  $      
& $\| p- p_{N\tau} \|_{H^1} $ &  $ \| T- T_{N\tau} \|_{H^1}$   & $ \| C- C_{N\tau} \|_{H^1} $ & $\mathcal E_{\delta t}$  & $\mathcal O_{\frac{\delta t}{2}}$  
  \\  & & & & & & 
       \\   \hline  
  $\frac{1}{10}$ & $0.0857$    &   0.0129   &   0.0014         & 0.0023    &  0.0867  & -----
        \\     & & & & & &     \\     \hline 
 $\frac{1}{20}$ &  0.0439      &  0.0066&   0.0007         & 0.0012   &  0.04443  & 0.9655  
         \\   & & & &&&\\   \hline
 $\frac{1}{40}$ &  0.0222     &   0.0032 &   0.0003          & 0.0008   &  0.0224  &0.9822
           \\  & & & &&& \\   \hline 
 $\frac{1}{80}$ &  0.0112     &  0.0018 &   0.0002        & 0.0006   &  0.0113  &0.9855
          \\ &  &&& & & \\     \hline
 $\frac{1}{160}$ & 0.0056     &  0.0011  &  0.0001& 0.0005   &  0.0057 &     0.9787
          \\  & & &  &&& \\ \hline
  \end{tabular}
  \end{center}
  \end{table}

\subsection{Space accuracy}
In this test, the rate of convergence with respect to polynomial degree $N$ 
for  $\left(\u,p,{\vartheta},\Psi \right)$ in the $L^2$-norm and $H^1$-norm have been tested numerically on the square $ ] - 1.1 [^ 2 $. 
We choose  two-dimensional analytic  solutions ( with the appropriate source terms) which are  affine functions with respect to $t$. 
So that the error in time is zero :
\begin{alignat}2
& u_1= -t\sin(\pi  x) \cos(\pi y) +t+1,\quad u_2 = t \cos(\pi x)\sin(\pi y)+2t+1,\notag\\
& p = \frac{-1}{\pi} \sin(\pi x)\cos(\pi y),\label{sol-analytique-4}\\
& T = t(2\cos(\pi x)\sin(\pi y) +1),\quad C = t \sin(\pi x)\cos(\pi y)\sin(\pi(x+y))+t-1. \notag
\end{alignat}
The conductivity and permeability coefficients are given as follows
\begin{alignat*}2
& \alpha(T,C) = \frac{1}{T^2+C^2+1},\quad 
\lambda_{ii}=\lambda_{21}= T^2+C^2+2,\quad i=1,2\quad{\rm{and}}\quad \lambda_{12}=0.
\end{alignat*}
We fixed the time step $ \delta t $ equal to $ 0.1$. 
We illustrate the behavior of the error between the exact solution and the
discrete solution versus the polynomial degree $N$ which is varying between
$N = 5$ and $N = 25$. 
The spectral convergence can be easily observed in Figure \ref{convergence-D-C-L2-H1} where we  present the $L^2-$error  of velocity and $H^1-$error of others in semi-logarithmic scales.
\begin{figure}
\centering	
\begin{tikzpicture}[scale=1.3, every mark/.append style={scale=1}]
\begin{semilogyaxis}[ylabel =Logarithmic error , xlabel= N, xmax=28,ymax=30,legend style={cells={anchor=west}, legend pos=north east}]
\addplot+[color=blue,xscale=1,yscale=1] table [x=N,y=ErL2U ] {courbe-Chp32.data};
\addplot+[color=red,xscale=1,yscale=1] table [x=N,y=ErH1P] {courbe-Chp32.data};
\addplot+[color=green,xscale=1,yscale=1] table [x=N,y=ErH1T] {courbe-Chp32.data};
\addplot+[color=violet,xscale=1,yscale=1] table [x=N,y=ErH1C] {courbe-Chp32.data};
\legend{velocity,pressure, temperature,concentration}
at={(0.98,0.98)}
\end{semilogyaxis}
\end{tikzpicture}
\caption{ The errors versus polynomial degree $N$: $L^2$-error of velocity and $H^1$-error of others} \label{convergence-D-C-L2-H1}
\end{figure}
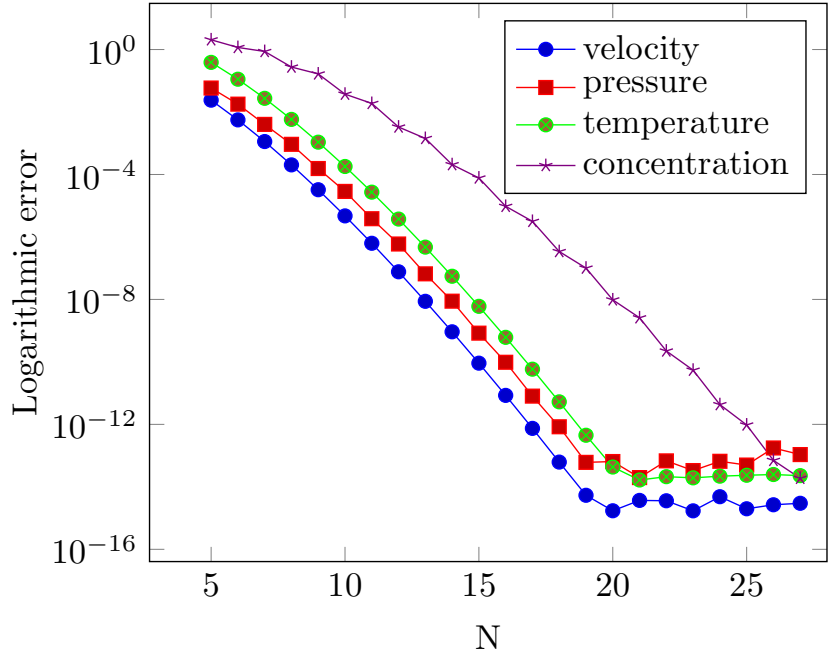

 \section{Conclusion}
In this paper, we have analyzed a model for the coupling of
the heat and mass equations with Darcy's equations for an incompressible  fluid, where
all diffusion parameters (thermal and mass diffusivity and Dufour and Soret coefficients)  are variables. We have proved that this  problem admits at least a solution in suitable spaces. To approximate its solution, we have used the high polynomial approximation, namely Spectral method and we have  proved its well-posedness. Thanks to the Brezzi-Rappaz-Raviart theorem, the a priori analysis is proved and the optimal errors are obtained.  Finally numerical tests confirm these theoretical findings. 

\bibliographystyle{plain}
\bibliography{references}
\newpage

\end{document}